\newcommand\cyr{%
  \renewcommand\rmdefault{wncyr}%
  \renewcommand\sfdefault{wncyss}%
  \renewcommand\encodingdefault{OT2}%
  \normalfont
  \selectfont}
\DeclareTextFontCommand{\textcyr}{\cyr}
\newcommand\sh{\textcyr{X}}
\newcommand\sw{\Psi}
\newcommand\shij{\sh_{i\to j}}
\newcommand\shji{\sh_{j\to i}}
\newcommand\swij{\sw_{i\to j}}
\newtheorem{Theorem}{Theorem}[section]
\newtheorem*{Theorem*}{Theorem}
\newtheorem{Lemma}[Theorem]{Lemma}
\newtheorem{Proposition}[Theorem]{Proposition}
\theoremstyle{definition}
\theoremstyle{remark}
\newcommand{\onto}{\twoheadrightarrow}
\newcommand{\into}{\hookrightarrow}
\newcommand{\ZZ}{\mathbb{Z}}
\newcommand{\PP}{\mathbb{P}}
\newcommand\tensor{\otimes}
\newcommand\nwsearrow{\nwarrow \hskip -.16in \searrow}
\newcommand\defn[1]{{\bf #1}}
\newcommand\iso{\cong}
\newcommand\sweep{\Psi}
\newcommand\dom{\,\backslash\,}
\newcommand\integers{\ZZ}
\newcommand\naturals{{\mathbb N}}
\newcommand\union{\cup}
\newcommand\Union{\bigcup}
\newcommand\complexes{{\mathbb C}}
\newcommand\comment[1]{{\bf *** #1 *** }}
\newcommand\Gr[2]{Gr_{#1}(\AA^{#2})}
\newcommand\Grkn{\Gr{k}{n}}
\newcommand\wt[1]{{\widetilde #1}}
\newcommand\Gm{{{\mathbb G}_m}}
\newcommand\rank{{\rm rank}}
\newcommand\diag{{\rm diag}}
\newcommand\rowspan{{\rm rowspan}}
\renewcommand\AA{{\mathbb A}}
\newcommand\junk[1]{}
\begin{document}

\title{Schubert calculus and shifting of interval positroid varieties}
\author{Allen Knutson}
\address{Department of Mathematics, Cornell University, Ithaca, NY 14853 USA}
\email{allenk@math.cornell.edu}
\thanks{AK was partially supported by NSF grant DMS-0902296.}
\date{\today. DRAFT}

\begin{abstract}
  Consider $k\times n$ matrices with rank conditions placed on intervals of 
  columns. The ranks that are actually achievable correspond naturally to upper 
  triangular partial permutation matrices, and we call the corresponding 
  subvarieties of $\Grkn$ the {\em interval positroid varieties}, as this
  class lies within the class of positroid varieties studied in
  [Knutson-Lam-Speyer].  It includes Schubert and opposite
  Schubert varieties, and their intersections. \junk{Some other subclasses
  will be studied in a companion paper [Knutson-Lederer].}

  Vakil's ``geometric Littlewood-Richardson rule''
  [Vakil] uses certain degenerations to positively compute the
  $H^*$-classes of Richardson varieties, each summand recorded
  as a $(2+1)$-dimensional ``checker game''.
  We use his same degenerations to positively compute
  the $K_T$-classes of interval positroid varieties,
  each summand recorded more succinctly
  as a $2$-dimensional ``$K$-IP pipe dream''.
  \junk{
    , and show
    that the problem is essentially equivalent to Kogan Schubert calculus
    (for which an algorithm, but not really a formula, was given 
    in [Knutson-Yong '05???]).
  }
  In Vakil's restricted situation these IP pipe dreams biject very simply
  to the puzzles of [Knutson-Tao].

  \junk{
  We introduce ``positroid schemes'', which are intersections
  (reduced but typically reducible) of positroid varieties, and classify
  them in terms of alternating sign matrices. We give an algorithm to
  decompose them.
  }

  We relate Vakil's degenerations to Erd\H os-Ko-Rado shifting, and
  include results about computing ``geometric shifts'' of general
  $T$-invariant subvarieties of Grassmannians.
\end{abstract}

\maketitle

\setcounter{tocdepth}{1}
{\footnotesize 
\tableofcontents}

%\markright{\MakeUppercase{ }}

\section{Introduction, and statement of results}

\subsection{Interval positroid varieties}\label{ssec:introIP}

Define the following \defn{interval rank function} $r$, from $k\times n$ 
matrices over a field, to the space of upper-triangular $n\times n$ matrices:
$$ M \mapsto r(M), \qquad r(M)_{ij} := \rank(
\text{the submatrix of $M$ using columns $\{i,i+1,\ldots,j\}$}) 
$$
Note that $r$ is unchanged by row operations, so is only a function of
the row span, and hence descends to a function on the $k$-Grassmannian
$\Grkn$.

It turns out (proposition \ref{prop:intdefsagree}) that the data of 
$r(M)$ is equivalent to that of an upper triangular
partial\footnote{meaning, at most one $1$ in any row and column} 
permutation matrix $f(M)$ of rank $n-k$, where
$$ r(M)_{ij} 
= \big|[i,j]\big| 
- \#\{\text{$1$s in $f(M)$ that are weakly Southwest of $(i,j)$}\}.
$$
Conversely, given the partial permutation $f$ (and its associated
rank matrix $r$) we can define two \defn{interval positroid varieties} 
in the $k$-Grassmannian:
\begin{eqnarray*}
  \Pi_f^\circ &:=& \{\rowspan(M) : M \in M_{k\times n},\ \rank(M)= k,\ r(M) = r \}\\
  \Pi_f &:=& \{\rowspan(M) : M \in M_{k\times n},\ \rank(M)= k,\
  r(M) \leq r \text{ entrywise} \} 
\end{eqnarray*}
By proposition \ref{prop:intdefsagree},
these are special cases of the {\em positroid varieties} studied in \cite{KLS},
giving us the facts that
\begin{enumerate}
\item $\Pi_f^\circ$ is smooth and irreducible (in particular, nonempty),
  and $\Pi_f$ is its closure,
\item $\Pi_f$ is normal and Cohen-Macaulay, with rational singularities, and
\item the intersection of any set of $\{\Pi_f\}$ is a (reduced) union 
  of others.
\end{enumerate}
More specifically, they are the Grassmann duals of the {\em projection
  varieties} of \cite{BC}, which are not as general as the
{\em projected Richardson varieties} of \cite{KLS2}, which are
(in type $A$) exactly all the positroid varieties. 
I thank Brendan Pawlowski for help navigating this terminology.

If the partially defined $f$ is defined exactly on $[k+1,n]$, and increasing
on there, then $\Pi_f$ is a \defn{Schubert variety}. 
The class of interval positroid varieties also includes
opposite Schubert varieties (by reversing the interval), and
their intersections, the \defn{Richardson varieties.} 
% (where $f$ is only 
% assumed to be increasing -- see proposition \ref{prop:Richardsons}). 
Still more generally, it includes 
(theorem \ref{thm:vakilvarieties}) the varieties appearing in 
Vakil's paper \cite{Vakil} used to compute Schubert calculus on $\Grkn$.

In this paper we answer the following questions (really, one question):
\begin{quotation}
  What is the expansion of the cohomology class, or better, 
  the %or $T$-equivariant cohomology,   or $K$-theory, or 
  equivariant $K$-theory class $[\Pi_f]$
  in the opposite Schubert basis $\{ [X^\lambda] \}$ of $K_T(Gr_k(\AA^n))$?

  These coefficients are known to be positive in a suitable sense
  \cite{AGM}; what is a combinatorial formula for which this
  positivity is manifest?
\end{quotation}

An answer to the first question was given in \cite{KLS} 
(in $H^*_T$) and \cite{HeLam} (in $K_T$), in terms of affine Stanley
symmetric functions, but it is not manifestly positive.

As our results will look exactly the same in equivariant cohomology
(over $\complexes$) as in the equivariant Chow ring (over an arbitrary field),
and in topological vs. algebraic $K$-theory, we will use the 
more-familiar topological terminology throughout.

In \S \ref{ssec:HT} we state our formul\ae\ in ordinary and
equivariant cohomology. % (or Chow, if the base field is not $\complexes$).
In \S \ref{ssec:shifting} we describe the geometry we use to derive
this formula, an extension of the degenerative technique from \cite{Vakil}.
In \S \ref{ssec:main} we give the actual derivation.
In \S \ref{ssec:KT} we explain the modifications necessary to 
compute in (equivariant) $K$-theory.

In particular, when $\Pi_f$ is a Richardson variety this allows us to
extend Vakil's results from cohomology to equivariant $K$-theory. 
In a companion paper \cite{KL} we apply these results to ``direct sums
of Schubert varieties'', another class of interval positroid varieties.

\subsection{IP pipe dreams}\label{ssec:HT}

Consider the \defn{label set} $\{0,1\} \,\cup\, \{A,B,\ldots\}$,
where only the latter group are called \defn{letters}, and consider
the following tile schema, with \defn{pipes} connecting the edges of a square:

\centerline{\epsfig{file=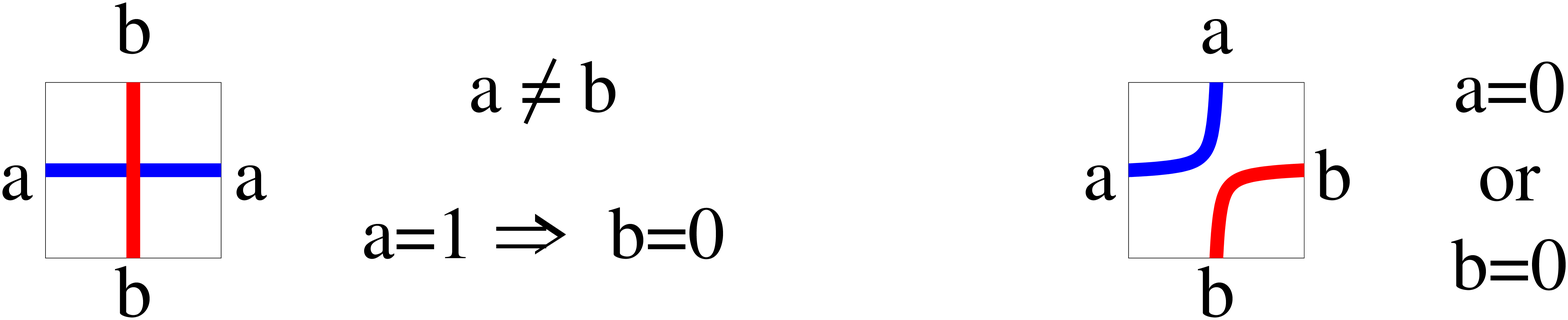,height=1.5in}}

%\noindent
Call these the \defn{crossing} and \defn{elbows} tiles,
and the $a=b=0$ elbows the \defn{equivariant tile}\footnote{%
  The $0$s and $1$s on these tiles are not quite the same as
  those on the puzzle pieces from \cite{KT}; see theorem
  \ref{thm:vakilvarieties} for the connection.}.
We will often want to determine a tile from its South and East labels,
and this can be done uniquely unless both are $0$.

We will tile these together, such that the boundary labels of 
adjoining tiles match up, making continuous ``pipes'' from boundary
to boundary bearing well-defined labels. 
Define an \defn{IP pipe dream} (the IP for ``interval positroid'') to
be a filling of the upper triangle of an $n\times n$ matrix, such that

\begin{center}
%\raisebox{1.5in}
  {
    \begin{minipage}[t]{0.52\linewidth}
\begin{itemize}
%\item any interior edge gets the same label from both tiles it bounds
\item on the East edges (of each $(i,n)$ square), there are no $1$ labels,
\item on the South edges (below each $(i,i)$ square), there are no $0$ labels,
\item on the West edges (West of each $(i,i)$ square), 
  there are {\em only} $0$ labels\\ (we will derive this from other\\
  conditions, in proposition \ref{prop:west0s}),
\item on the North edges (above each $(i,1)$ square),
  there are only $0$s and $1$s,
\item no two pipes of the same label cross, and finally,
\item {\bf no two {\em lettered} pipes cross twice.} \\
  This is the only nonlocal condition.
\end{itemize}
%It will turn out that all West edges are labeled $0$ (lemma \ref{lem:West0}).

    \end{minipage} }
  \hfill
  \raisebox{-2.3in}
  {
    \begin{minipage}[t]{0.42\linewidth}
      {\epsfig{file=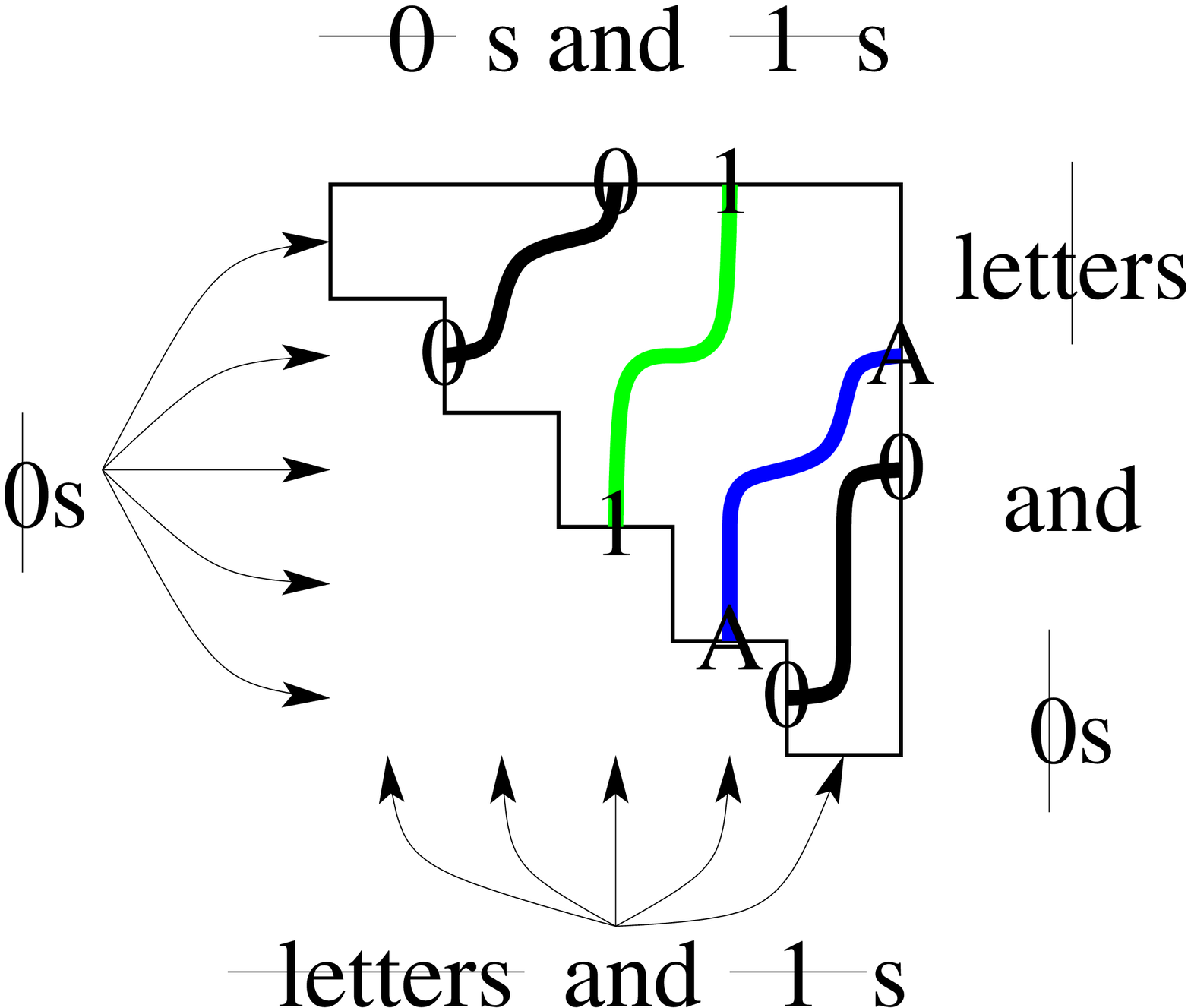,height=2.3in}}
    \end{minipage} }
\end{center}

In fact the $1$ acts more like a special letter than like the $0$
(especially in \S \ref{ssec:viable});
for example the ``no two lettered pipes cross twice'' rule applies even
if $1$ is considered to be a letter, because of the second condition
on crossing tiles.

\begin{figure}[htbp]
  \centering
  \epsfig{file=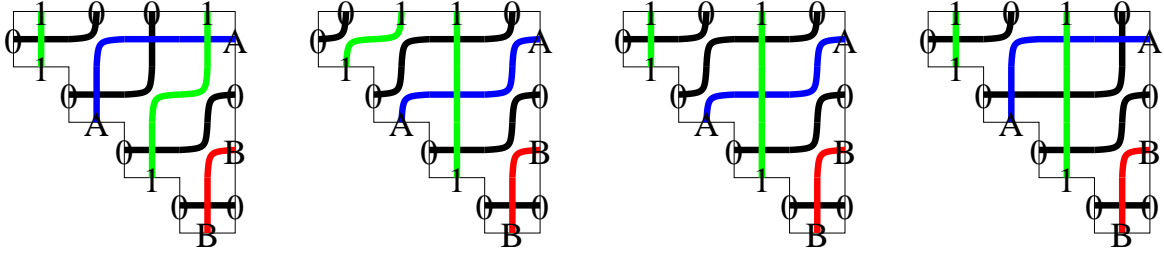,height=1.3in}
  \caption{The IP pipe dreams
    whose partial permutation is $1\mapsto 2,3\mapsto 4$. 
    (The lettered pipes connect the $1$st and $3$rd East edges 
    to the $2$nd and $4$th South edges.)
    Note that in the first and fourth figures, the $A$ pipe crosses a $0$ pipe 
    twice, but that's permissible because the $0$ isn't a lettered pipe.  }
  \label{fig:IPpipesEx}
\end{figure}

\newcommand\hlabel[1]{-\!\!\!-\!\!\!\!\!\!\!\! #1\ }
\newcommand\hlabelzero{-\!\!\!-\!\!\!\!\!\! 0\ }
\newcommand\hlabelone{-\!\!\!\!-\!\!\!\!\!\! 1\ }
\newcommand\vlabel[1]{\,\big|\!\!\! #1}

Each lettered pipe connects a horizontal edge below the diagonal to
a vertical edge on the East side. Since $a\neq b$ in crossing tiles,
the $i$th $\hlabel B\, $ from the left must connect to
the $i$th $\vlabel B$ from the top. 
By the nonlocal condition, the $i$th $A$ pipe will cross the $j$th $B$ pipe
either once or not at all, and can be predicted from the boundary
and the Jordan curve theorem.

We think of two IP pipe dreams as equivalent if they differ only in
the letter labels. This {\em includes} the possibility of folding two
letters into the same letter (only allowed if those pipes don't cross,
which as just explained can be predicted from the boundary).

To an IP pipe dream $P$, we associate two objects: 
\begin{itemize}
\item $f(P)$, an upper triangular partial permutation
  depending on only\\ the South and East labels of $P$, and 
\item $\lambda(P)$, a partition depending on only the North labels.
\end{itemize}
The partial permutation $f(P)$ is induced by the lettered pipes, 
(i.e. not labeled $0,1$), as follows. For each lettered pipe in $P$,
place a $1$ in $f(P)$ above the South end
of the pipe, and left of the East end. In particular, the $1$s coming 
from pipes of a given letter are arranged NW/SE (since such pipes
don't cross).
The IP pipe dreams in figure \ref{fig:IPpipesEx} are all those with 
$f(P)$ being the partial permutation $1\mapsto 2, 3\mapsto 4$.

The English partition $\lambda(P)$ in the fourth quadrant of
the Cartesian plane is read as follows. Start at the point
$(0,\#0$s across the North side$)$  and reading the North side of $P$
from left to right, move down for each $1$, and left for each $0$.
The region above the resulting path is the partition $\lambda(P)$,
and $\dim X^{\lambda(P)} = |\lambda(P)|$.
As we will see later, $|\lambda(P)| + |\mu| = |\nu| + \#$equivariant tiles
in $P$. In the IP pipe dreams in figure \ref{fig:IPpipesEx}, the partitions
are $(2) = {\tiny\yng(2)}, (1,1) = {\tiny\yng(1,1)}, 
(1) = {\tiny\yng(1)}, (1) = {\tiny\yng(1)}$ respectively.

\begin{Theorem}\label{thm:Hformula}
  In $H^*_T(\Grkn)$, expanding $[\Pi_f]$ in the
  $\integers$-basis of opposite Schubert classes gives
  $$ [\Pi_f] 
  = \sum_{P: f(P)=f\atop \text{ $P$ has no equivariant tiles}} [X^{\lambda(P)}]
  \quad=\quad \sum_\lambda \#\bigg\{ P\ :\ {f(P)=f,\ \lambda(P)=\lambda \atop
  \text{ $P$ has no equivariant tiles} } \bigg\}\ [X^\lambda].
  $$
\end{Theorem}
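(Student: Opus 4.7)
The plan is to use the extension of Vakil's degenerative technique outlined in \S \ref{ssec:shifting} and derived in \S \ref{ssec:main} to decompose $[\Pi_f]$ tile-by-tile into a positive sum of opposite Schubert classes. The induction is on a natural complexity of $f$, measured by the number of tiles yet to be determined in the upper triangle of the $n \times n$ grid: at each step a single geometric shift $\shij$ is applied to the current interval positroid variety. By flatness, the cohomology class is preserved, and one obtains either a single new interval positroid variety (a ``crossing'' event) or a union of two of them (an ``elbows'' event), corresponding to placing exactly one non-equivariant tile in the next grid position.

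I would organize the bookkeeping so that tiles are filled in from the East and South boundary (determined by $f$ via the recipe associating a lettered pipe to each $1$ of $f$) and proceed Northwest toward the diagonal. At any intermediate stage, the boundary labels of the partially built pipe dream record exactly the combinatorial data of the current variety $\Pi_{f'}$ appearing in the degeneration tree. The base case is reached when all tiles have been filled and the partial permutation has become Grassmannian: then $\Pi_{f'} = X^{\lambda(P)}$, with $\lambda(P)$ read off from the North labels via the lattice-path recipe described before the theorem. Iterating the degeneration and summing over the leaves of the resulting binary tree produces
\[ [\Pi_f] \;=\; \sum_P [X^{\lambda(P)}], \]
with the sum running over IP pipe dreams $P$ with $f(P)=f$ and no equivariant tiles, since the $H^*_T$ setting forbids the equivariant (doubled-zero) tile.

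Several verifications are then needed to identify the bookkeeping with the combinatorial definition of an IP pipe dream. The East edges carry no $1$ labels and the South edges no $0$ labels because lettered pipes enter through those edges to encode $f$; the West edges carry only $0$s (proposition \ref{prop:west0s}) because no letter pipe can survive past the diagonal, as forced by the triangular shape of the grid; the North edges carry only $0$s and $1$s because all letters must turn before exiting the top, which is what permits the lattice-path reading to define a partition; and the nonlocal condition that no two lettered pipes cross twice is the global manifestation of the fact that any attempted second crossing under an allowed shift would force the variety outside the class of interval positroid varieties, impossible by the closure properties from \cite{KLS}.

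The hard part will be the inductive step itself: showing that the local dichotomy between ``crossing'' and ``elbows'' tiles matches the geometric dichotomy between irreducible and reducible degenerations, and verifying that the case is in fact forced by the current boundary except precisely when both the South and East labels at the next tile are $0$ (the single ambiguity flagged in the tile schema). This requires a careful analysis of what the shift $\shij$ does to the rank conditions defining $\Pi_{f'}$, generalizing Vakil's type A/B/C analysis for Richardson varieties to arbitrary interval positroid varieties. Once this matching is established, the theorem is an immediate iteration of flatness. The equivariant tiles do not appear here because they correspond to degenerations contributing equivariant weights that are absent from the integer expansion being stated; they will be reinstated in the $K_T$ formula of \S \ref{ssec:KT}.
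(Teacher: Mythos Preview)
Your overall strategy matches the paper's: induct through the Vakil sequence of shifts $\shij$, track the current interval positroid variety by a viable slice, use flatness to preserve the class, and read off $\lambda(P)$ at the end. However, there is a genuine error in your description of the inductive step. You claim the shift yields ``either a single new interval positroid variety (a `crossing' event) or a union of two of them (an `elbows' event)'', so that the degeneration tree is binary. That is only true in the one-letter (Richardson) case treated in \cite{Vakil} and \S\ref{sec:puzzles}. For a general interval positroid variety, theorem~\ref{thm:safeshiftintro} says $\shij\Pi_f$ can have up to $m+1$ components, $m$ being the number of distinct letters. The correct dichotomy at each box is not ``crossing versus elbows'' but whether the South and East labels at the kink are both $0$: if not, the tile is \emph{forced} (it may be a crossing tile or a dot tile) and the variety is $\shij$-invariant (proposition~\ref{prop:uniquefill}); if both are $0$, one places a fusor tile whose West label can be any element of the set $C$ of labels on dots minimally Northwest of the kink (proposition~\ref{prop:nonuniquefill}), and $|C|$ may well exceed $1$. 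So your proposed matching of ``crossing'' to ``irreducible shift'' and ``elbows'' to ``two-component shift'' is incorrect, and a binary-tree bookkeeping would miss IP pipe dreams whenever more than one letter is present.

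A smaller point: the equivariant tile is not ``forbidden'' by the $H^*_T$ setting. It appears in theorem~\ref{thm:HTformula} with weight $y_i-y_j$; theorem~\ref{thm:Hformula} is the ordinary-cohomology statement recovered by setting each $y_i=0$, which kills exactly those summands.
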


Let $T \leq GL(n)$ be the diagonal matrices. As $\Pi_f$ is preserved by 
this group, it defines a class in $H^*_T(\Grkn)$, again denoted $[\Pi_f]$. 
The corresponding expansion in the basis requires coefficients
from $H^*_T(pt) \iso Sym(T^*) \iso \integers[y_1, \ldots, y_n]$, where $y_i$
is the character $y_i(\diag(t_1,\ldots,t_n)) = t_i$ on $T$.

Define $wt(P) \in H^*_T(pt)$ (for ``weight'') as the 
product of $y_{row(t)}-y_{col(t)}$, over all equivariant tiles $t$.
In the IP pipe dreams in figure \ref{fig:IPpipesEx}, the weights are
$1, 1, y_1-y_2, y_2-y_4$ respectively.

\begin{Theorem}\label{thm:HTformula}
  In $H^*_T(\Grkn)$, expanding $[\Pi_f]$ in the
  $H^*_T(pt)$-basis of opposite Schubert classes gives
  $$ [\Pi_f] = \sum_{P:\ f(P) = f} wt(P)\ [X^{\lambda(P)}]. $$
  Specializing each $y_i$ to $0$ recovers the previous theorem.
\end{Theorem}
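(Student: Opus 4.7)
The plan is to derive the formula by iteratively applying the $T$-equivariant geometric shifts of \S\ref{ssec:shifting} --- which are Vakil's degenerations reinterpreted as Erd\H os-Ko-Rado shifts --- filling in the tiles of an IP pipe dream one at a time, starting from the Southeast corner of the upper triangle and moving inward. At each stage the previously resolved tiles have determined the South and East boundary labels of the next position $(i,j)$, and the shift at that position rewrites $[\Pi_f]$ in $H^*_T(\Grkn)$ as a sum of classes of the same form, but with one more tile fixed.

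If at position $(i,j)$ the South and East labels are not both $0$, then (by the tile-schema remark that a tile is determined by its South and East labels uniquely unless both are $0$) the tile is forced, the corresponding shift does not change the class, and we pass to the next position. If both labels are $0$, the shift produces a $T$-equivariant flat family whose special fiber is either a reduced union of two components --- one for each of the ``crossing'' and the $a=b\neq 0$ ``elbows'' configurations --- giving $[\Pi_f] = [\Pi_{f'}] + [\Pi_{f''}]$ with both summands handled by induction, or else a single reduced support carrying an embedded component along a $T$-fixed stratum; this last case is the equivariant tile, and yields $[\Pi_f] = (y_{row(t)}-y_{col(t)})\,[\Pi_{f'}]$.

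The main obstacle is to identify the equivariant scalar in that last case. This amounts to a local computation at the $T$-fixed point supporting the embedded component: one chooses a standard affine chart on $\Grkn$ around that point, reads off $T$-weights on the tangent directions, and matches the weight on the normal direction being ``blown up'' by the shift against the character of the one-parameter subgroup generating the shift. The answer must come out to exactly $y_{row(t)}-y_{col(t)}$, where $(row(t), col(t))$ is the position of the equivariant tile. The computation is standard in flavor --- of the sort familiar from Graham-positivity arguments --- but requires care with signs and with the convention identifying $(row(t),col(t))$ with the indices of the $T$-weights on $\C^n$.

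Iterating until every position of the upper triangle is filled, the original class $[\Pi_f]$ has been rewritten as a sum over complete IP pipe dreams $P$ with $f(P)=f$, each contributing $wt(P)$ times the class of the opposite Schubert variety to which $\Pi_f$ has been degenerated --- namely $X^{\lambda(P)}$, since the North-edge data read off the completed pipe dream recovers $\lambda(P)$ by the prescription preceding the theorem. This gives exactly the stated formula. Specializing $y_i \to 0$ kills every $wt(P)$ containing an equivariant tile, so Theorem \ref{thm:Hformula} is recovered as a corollary, and both theorems are proved in tandem.
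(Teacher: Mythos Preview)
Your overall strategy --- induct through the Vakil sequence of shifts, placing one tile at a time --- matches the paper's. But the analysis of the step where both South and East labels are $0$ is wrong in a way that breaks the argument.

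You frame that step as an \emph{either/or}: either the limit has two reduced components, or it has an embedded component producing the factor $y_i-y_j$. Neither alternative is what actually happens. By theorem~\ref{thm:safeshiftintro} (proved as theorem~\ref{thm:safeshift}), the shift $\sh_{i\to j}\Pi_{f}$ is always a \emph{reduced} union of interval positroid varieties; there is no embedded component, and the number of components is not bounded by two (it can be up to $m+1$ where $m$ is the number of letters in play). The equivariant factor does not arise from non-reducedness of the special fiber at all. It comes from the \emph{sweep} $\sw_{i\to j}\Pi_f$, via proposition~\ref{prop:KTshift}: one has
\[
[\Pi_f] \;=\; (y_i - y_j)\,[\sw_{i\to j}\Pi_f] \;+\; [\sh_{i\to j}\Pi_f]
\;=\; (y_i - y_j)\,[\Pi_{g(P_e)}] \;+\; \sum_{f'\in C}[\Pi_{f'}],
\]
where the sweep term corresponds to placing the equivariant tile and the sum corresponds to the various fusor tiles. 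So at each $(0,0)$ position \emph{all} of these tile choices branch off simultaneously; the sum over IP pipe dreams records the full tree of choices. Note too that $\dim\sw_{i\to j}\Pi_f = \dim\Pi_f + 1$, which is why a degree-one equivariant coefficient can appear on that term; your proposed relation $[\Pi_f] = (y_i-y_j)[\Pi_{f'}]$ with $\Pi_{f'}$ of the same dimension is incompatible with degrees.

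The ``local computation at a $T$-fixed point'' you sketch for identifying the weight is therefore aimed at the wrong object. The weight $y_i - y_j$ is simply the $T$-weight on the $\mathbb A^1$ in the family $\widetilde\Psi_{i\to j}\Pi_f \to \mathbb P^1$, read off directly from $\exp(t\,e_{ij})$; see the proof of proposition~\ref{prop:KTshift}. Once you replace your either/or with the correct shift-plus-sweep decomposition, the rest of your inductive outline goes through as in the paper.
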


This formula is manifestly Graham-positive\footnote{%
  Moreover, Graham's derivation shows that if $X \subseteq G/P$ is a
  subvariety, and $[X] = \sum_\pi c_\pi [X^\pi], c_\pi \in H^*_T$ is the 
  expansion in opposite Schubert classes, then each coefficient $c_\pi$ 
  is not only a sum of products of simple roots, but can be written as a
  sum of products of {\em distinct, positive} roots.
  This formula for $[\Pi_f]$ also does this.}
\cite{Graham}. In the figure \ref{fig:IPpipesEx} example, it says
$$ [\Pi_{1\mapsto 2,\ 3\mapsto 4}] = [X^{(2)}] + [X^{(1,1)}] 
+ (y_1-\cancel{y_2}+\cancel{y_2}-y_4) [X^{(2,1)}]
\qquad \in H^*_T(\Grkn). $$

\subsection{Shifting and sweeping}\label{ssec:shifting}

In this paper ``variety'' means ``reduced and irreducible scheme'',
and any ``subvariety'' will be closed. Also, $[n]$ denotes $\{1,2,\ldots,n\}$.

Let $X$ be a $T$-invariant subvariety of $\Grkn$, 
and for $i,j \in [n]$ define the \defn{(geometric) shift} of $X$
$$ \sh_{i\to j} X := \lim_{t\to \infty} \exp(t e_{ij}) \cdot X $$
where $e_{ij}$ is the matrix with a $1$ at $(i,j)$ and $0$s elsewhere.
(The precise definition of such a limit is recalled in \S \ref{sec:shifting}.)
Then the limit scheme $\sh_{i\to j} X$ is again $T$-invariant, and
defines the same homology and $K$-class as $X$ itself.
I learned of this construction from \cite{Vakil}, but as we
explain in \S \ref{sec:shifting}, it is closely related to the
Erd\H os-Ko-Rado shifting construction \cite{EKR} in extremal combinatorics.

To keep track of the equivariant class, we also need the 
\defn{(geometric) sweep} of $X$, 
$$ \sweep_{i\to j} X := \overline{\bigcup_{t\in \AA^1} \ \exp(t e_{ij}) \cdot X} $$

For general $X$, these schemes can be very difficult to compute; we give
some general results in \S \ref{sec:shifting}. But certain shifts of
certain interval positroid varieties are tractable.

Call $(i,j)$ an \defn{essential box} for the partial permutation $f$
if its rank condition $r(M)_{ij} \leq r(f)_{ij}$ is not implied by the
rank condition for any of $(i\pm 1,j),(i,j\pm 1)$. (There is an easy
combinatorial description of these from \cite{Fulton92}, 
recalled in \S \ref{sec:IP}.) Call a shift $(i,j)$ \defn{safe for $f$} if
for each essential box $(i',j')$, either
 $i \in [i',j']$, or  $j\notin [i',j']$, or  $(i',j') = (i+1,j)$.

\begin{Theorem}\label{thm:safeshiftintro}
  If the shift $(i,j)$ is safe for $f$, 
  then $\sw_{i\to j} \Pi_f$ is again an interval positroid variety,
  and $\sh_{i\to j} \Pi_f$ is a certain reduced union $\Union_{f' \in C} \Pi_{f'}$ 
  of interval positroid varieties.
  If $(i+1,j)$ is indeed an essential box for $f$, then
  $$ [\Pi_f] = (y_i-y_j) \big[\sw_{i\to j} \Pi_f\big] + \sum_{f'\in C} [\Pi_{f'}] $$
  as elements of $H^*_T(\Grkn)$. 
  If $(i+1,j)$ is not an essential box for $f$, then
  $\Pi_f$ is \defn{$\sh_{i\to j}$-invariant}, 
  in that $\sh_{i\to j} \Pi_f = \sw_{i\to j} \Pi_f = \Pi_f$.

  If $f = f(P)$ for an IP pipe dream using $m$ distinct letters, then
  $\sh_{i\to j} \Pi_f$ has at most $m+1$ components.\footnote{I thank
    Mathias Lederer for this observation.
    The important case $m=1$ is explored in \S \ref{sec:puzzles}.} 
  The intersection of any set 
  $S\subseteq C$ of these components is again an interval positroid variety
  $\Pi_{f(S)}$. In particular, as $K$-classes,
  $$ [\Pi_f] = [\sh_{i\to j} \Pi_f] 
  = \bigg[ \Union_{f'\in C} \Pi_{f'} \bigg]
  = \sum_{S \subseteq C,\ S\neq \emptyset} (-1)^{|S|-1}\ [\Pi_{f(S)}].
  $$
\end{Theorem}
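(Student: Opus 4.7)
The plan is to reduce the entire statement to a careful analysis of how the rank matrix $r$ transforms under the one-parameter subgroup $t\mapsto \exp(te_{ij})$, and then feed the geometric output into two standard machines: the equivariant ``sweep formula'' and property (3) above (intersections of interval positroid varieties are reduced unions of interval positroid varieties).

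The key local computation is that $\exp(te_{ij})$ acts on $k\times n$ matrices by adding $t$ times column $i$ to column $j$, so the rank of any submatrix on columns $[i',j']$ is unchanged whenever $i\in[i',j']$ (the column operation is internal to the submatrix) or $j\notin[i',j']$ (the modified column is absent). The safety hypothesis then says that among the essential rank conditions defining $\Pi_f$, the single box $(i+1,j)$ is the only one that can possibly be affected. If $(i+1,j)$ is not essential, every essential condition is preserved, so $\Pi_f$ is set-theoretically stable under the whole one-parameter subgroup and one concludes $\sh_{i\to j}\Pi_f=\sw_{i\to j}\Pi_f=\Pi_f$. If $(i+1,j)$ is essential, I would define $f''$ by ``moving'' the unique $1$ of $f$ responsible for this essential box (using the combinatorial description from \cite{Fulton92}), verify that $\Pi_{f''}$ is cut out by all the preserved rank conditions together with the one-step relaxation at $(i+1,j)$, and conclude by a dimension count that $\sw_{i\to j}\Pi_f = \Pi_{f''}$. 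The shift $\sh_{i\to j}\Pi_f$ is then a codimension-one $T$-invariant subvariety of $\Pi_{f''}$ that must, by property (3), be a reduced union $\bigcup_{f'\in C}\Pi_{f'}$; I would enumerate the members of $C$ by a direct limit computation on matrix representatives, or equivalently by tracking torus-fixed points under the degeneration.

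With those identifications in hand, the $H^*_T$ identity becomes an instance of the general sweep formula developed in \S\ref{sec:shifting}: for a $T$-invariant $X$ on which the root-$(y_i-y_j)$ one-parameter subgroup acts nontrivially, one has $[X] = (y_i-y_j)\,[\sw_{i\to j} X] + [\sh_{i\to j} X]$, because $\sw_{i\to j} X$ is the total space of the degeneration of $X$ to $\sh_{i\to j} X$ and the deforming direction carries weight $y_i-y_j$. The component bound $|C|\leq m+1$ when $f=f(P)$ for a pipe dream with $m$ letters I expect to prove by a local ``uncrossing'' argument at the single crossing of $P$ that touches the essential box $(i+1,j)$: at most $m+1$ ways to reroute the affected lettered pipe produce at most $m+1$ distinct limits. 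Finally, the $K$-theory inclusion-exclusion is then pure bookkeeping: $[\Pi_f] = [\sh_{i\to j}\Pi_f]$ from the flat degeneration, and property (3) guarantees that all intersections of the $\Pi_{f'}$ are again reduced interval positroid varieties $\Pi_{f(S)}$, so the standard scheme-theoretic inclusion-exclusion for the class of a reduced union collapses cleanly into the asserted alternating sum.

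The hard part will be the third step above: precisely identifying $\sw_{i\to j}\Pi_f$ with a specific $\Pi_{f''}$ and then enumerating the components of $\sh_{i\to j}\Pi_f$. Establishing the sweep identification requires showing that the relaxed list of rank conditions cuts out something reduced, irreducible, and of the expected dimension, which I plan to handle by pushing the essential-set combinatorics of \cite{Fulton92} through the analysis and invoking the irreducibility half of property (1). Pinning down the component set $C$ is the true combinatorial crux, since several candidate ``moves'' on $f$ can produce the same limit or fail to remain upper-triangular partial permutations; matching the count $|C|\leq m+1$ back to the IP pipe dream picture is precisely the bridge that will make Theorems~\ref{thm:Hformula}--\ref{thm:HTformula} follow by induction on the number of lettered pipes.
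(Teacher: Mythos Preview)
Your overall architecture matches the paper's, but there are three places where the plan either invokes something that isn't available or glosses over the step that carries the real weight.

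First, the $H^*_T$ sweep formula $[X] = (y_i-y_j)[\sw_{i\to j}X] + [\sh_{i\to j}X]$ is \emph{not} automatic: it requires that the map $\wt\sw_{i\to j}X \to \sw_{i\to j}X$ be degree $1$ (see Proposition~\ref{prop:KTshift}). You never address this. The paper handles it via Theorem~\ref{thm:Tconvex}(4), which in turn needs the combinatorial shift of the fixed-point collection to be non-invariant; this has to be checked.

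Second, your shift computation is the heart of the matter and your proposed methods (``direct limit on matrix representatives'' or ``tracking torus-fixed points'') are not how the paper proceeds, and the paper in fact remarks that a fixed-point argument ``seems likely'' but does not carry it out. What the paper actually does is: (i) obtain an \emph{upper bound} $\sh_{i\to j}\Pi_J \subseteq \Pi_{J'}\cap X_{[i,j-1]\leq r}$ by shifting the single mobile rank condition backwards (Lemma~\ref{lem:shiftSback}, Proposition~\ref{prop:safesweep}); (ii) decompose that upper bound into components via a Monk-type formula (Theorem~\ref{thm:monk}); and (iii) show no component is missing by matching cohomology classes, which is done using the \emph{transition formula for affine Stanley symmetric functions} from \cite{LSLittle} together with the representation of positroid classes by such functions from \cite{KLS}. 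That last input is a nontrivial external result, and nothing in your outline supplies a substitute for it.

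Third, your $K$-theory step misquotes property~(3). That property says an intersection of interval positroid varieties is a reduced \emph{union} of interval positroid varieties, not a single one. To get a clean inclusion--exclusion you need each $\bigcap_{f'\in S}\Pi_{f'}$ to be irreducible, and this is a separate computation. The paper does it explicitly in Theorem~\ref{thm:safeshift} by observing that the components sit inside $\Pi_{J'}$ like the codimension-one Schubert divisors inside a Coxeter Schubert variety, so their intersections are governed by the boolean lattice of a Coxeter word. Your appeal to property~(3) alone would not close this.

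The $m+1$ bound on components is also proved differently: the paper argues (Proposition~\ref{prop:nonuniquefill}) that the components are indexed by dots of $f$ minimally Northwest of $(i,j)$, and since dots with the same letter are arranged NW/SE, at most one dot per letter (plus possibly the label $1$) can appear in that set. This is cleaner than the ``uncrossing at a single tile'' picture you sketch.
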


The precise version of the theorem (enumerating the components in $C$)
will be theorem \ref{thm:safeshift}, 
which also includes the extension to $K_T(\Grkn)$.

\subsection{The Vakil sequence}\label{ssec:main}

Given an IP pipe dream $P$, and a box $(i, j), i\leq j$, 
define (as in figure \ref{fig:slice}) the
\defn{slice $s(P,i,j)$ of $P$ at $(i,j)$} as the data of the labels on
\begin{itemize}
\item  the South edges of 
  $\{ (m,m) : m < i \}
  \ \cup\ \{ (i,m) : i \leq m \leq j \}
  \ \cup\ \{ (i-1,m) : j < m \leq n \}$
\item the East edges of $(i,j)$ and $\{ (m,n) : m < i \}$.
\end{itemize}

\begin{figure}[htbp]
  \centering
  \epsfig{file=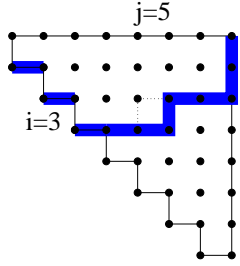,height=1.5in}
  \caption{An $n=6$ example of a slice at $(3,5)$.
    The slice is the thick blue edges and the (unpictured) labels thereon.}
  \label{fig:slice}
\end{figure}

Not every labeling $s$ of these edges arises from an IP pipe dream; 
we spell out the conditions in \S \ref{sec:main}. 
To each ``viable'' slice $s$, 
we associate in \S \ref{sec:main} a partial permutation $f(s)$.
For now, it suffices to mention that $f(s(P,n,n)) = f(P)$, 
and $\Pi_{f(s(P,1,1))} = X^{\lambda(P)}$.

Given a slice $s$, we can consider what tiles can be placed at $(i,j)$,
making new slices $s'$ at $(i,j-1)$ (or at $(i-1,n)$ if $j=i$).

\begin{Theorem}\label{thm:main}
  Let $s$ be a slice at $(i,j)$, and $\Pi_{f(s)}$ the associated 
  interval positroid variety (defined in \S \ref{sec:main}). 
  The shift $(i,j)$ is safe for $f(s)$.
  Let $\{s'\}$ be the set of viable slices arising from a tile at $(i,j)$.
  \begin{enumerate}
  \item If the South and East labels of $(i,j)$ in $s$ are not both zero,
    then $\Pi_{f(s)}$ is $\sh_{i\to j}$-invariant. There is a unique $s'$,
    and its $f(s')$ is unchanged from $f(s)$.
    \begin{enumerate}
    \item If the labels are equal but not $0$, forcing the 
      $a=0$ elbows tile, then $f(s)\big(i\big) = j$.
    \item If the labels are distinct, forcing the crossing tile,
      then $f(s)\big(i\big) \neq j$ (and is undefined if the East label is $0$).
    \end{enumerate}
  \item If the South and East labels of $(i,j)$ in $s$ are both $0$,
    then the various $\{\Pi_{f(s')}\}$ are the sweep (for the 
    equivariant tile) and the components of the shift (for the other possible
    tiles).
  \end{enumerate}
\end{Theorem}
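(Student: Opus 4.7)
The plan is to reduce Theorem \ref{thm:main} to Theorem \ref{thm:safeshiftintro} plus explicit combinatorial bookkeeping. The key observation is that a slice $s$ at $(i,j)$ encodes exactly the information needed to read off both (a) which boxes are essential for $f(s)$ on or near the corner $(i,j)$, and (b) which tile fillings are locally compatible. The strategy is to align these two pieces of data.

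First I would verify the safety claim. Using the definition of $f(s)$ to be given in Section \ref{sec:main} together with Fulton's criterion for essential boxes, I would show that every essential box $(i',j')$ of $f(s)$ is determined by a $1$ in $f(s)$ whose row and column coordinates come from slice edges already traversed. Since the slice at $(i,j)$ has only processed South edges on rows $\leq i$ past column $i$ and East edges on rows $<i$, any essential box $(i',j')$ of $f(s)$ must satisfy either $j' < j$, or $i' > i$, or $(i',j')=(i+1,j)$. The first two cases are exactly the conditions $j\notin [i',j']$ or $i\in[i',j']$ in the definition of a safe shift, so $(i,j)$ is safe for $f(s)$.

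Next I would dispatch Case 1. Suppose the South and East labels at $(i,j)$ are not both zero. The tile schema then forces a unique tile: matching nonzero labels force the nonzero-elbow, distinct labels force the crossing. I claim that in each subcase $(i+1,j)$ is \emph{not} an essential box for $f(s)$: when the labels match (subcase (a)) the forced elbow says that a lettered pipe enters the South edge and exits the East edge, so by the definition of $f$ from pipes we indeed have $f(s)(i)=j$, and the rank jump at $(i+1,j)$ coincides with that at $(i+1,j-1)$ or $(i,j)$, making the box inessential; in subcase (b) the crossing tile tells us no pipe turns at $(i,j)$, and again the essential-box test at $(i+1,j)$ fails. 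By the final part of Theorem \ref{thm:safeshiftintro}, $\Pi_{f(s)}$ is $\sh_{i\to j}$-invariant. The unique new slice $s'$ is obtained by replacing the South and East edges of $(i,j)$ with its North and West edges as read off from the forced tile, and $f(s')=f(s)$ by construction because no lettered endpoints are altered.

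Finally, Case 2. When both labels are $0$, there are multiple locally valid tile choices: the equivariant tile (both elbows $0$), a crossing with a common letter on opposite sides, or a nonzero-elbows tile opening a new lettered pipe. I would show that the equivariant tile produces the slice $s'$ corresponding to $\sw_{i\to j} \Pi_{f(s)}$, by checking that sweeping shifts exactly one $1$ of $f(s)$ from position $(i+1,j)$ into the row $i$, which is the combinatorial effect on $f$ that the equivariant elbow records. For the other tiles, each viable choice creates a new interval positroid $\Pi_{f(s')}$ and I would identify these with the components of $\sh_{i\to j}\Pi_{f(s)}$ promised by Theorem \ref{thm:safeshiftintro}: the combinatorial list of these components, to be made precise in the refined Theorem \ref{thm:safeshift}, is in bijection with the ways of re-routing pipes through the now-open corner $(i,j)$. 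The main obstacle here is the bookkeeping of this bijection: verifying that every component $f'\in C$ of the shift arises from some tile choice, that distinct tile choices give distinct components, and that the safety condition prevents any spurious extra components. Once this matching is in place, the $H^*_T$ and $K_T$ statements follow directly by substituting the tile-to-component dictionary into the formula of Theorem \ref{thm:safeshiftintro}.
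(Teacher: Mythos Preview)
Your overall architecture matches the paper's: the proof is indeed a combination of (i) a safety check for the shift, (ii) the unique-fill case, and (iii) the multi-fill case identified with sweep and shift components. In the paper these are Propositions \ref{prop:safe00essential}, \ref{prop:uniquefill}, and \ref{prop:nonuniquefill}, and the proof of Theorem \ref{thm:main} literally just invokes those three. So strategically you are on target.

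However, your safety argument contains a genuine error. You assert that every essential box $(i',j')$ of $f(s)$ satisfies ``$j'<j$, or $i'>i$, or $(i',j')=(i+1,j)$'' and that ``the first two cases are exactly the conditions $j\notin[i',j']$ or $i\in[i',j']$''. The second clause is false: $i'>i$ gives $i\notin[i',j']$, not $i\in[i',j']$, and it says nothing about whether $j\in[i',j']$. The actual argument (Proposition \ref{prop:safe00essential}) is different in kind: one takes an essential box $(a,b)$ in the potentially unsafe range $i<a\leq j\leq b$ and uses the specific NW/SE arrangement of the $0$-dots in the bottom half to force $a=i+1$, then a ray-chasing argument along row $i$ to force $b=j$. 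Your ``which edges have been processed'' heuristic does not capture this; the $0$-dot geometry is essential.

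There is also a slip in Case 2. When both the South and East labels at the kink are $0$, a crossing tile is \emph{impossible}: a crossing has equal labels on opposite edges with the horizontal and vertical labels distinct, so South $=0$ and East $=0$ would force all four edges to be $0$, contradicting the crossing condition. The only tiles $s$ admits here are fusor tiles (the equivariant tile being the $W=\emptyset$ fusor). The paper's Proposition \ref{prop:nonuniquefill} then identifies the admissible nonempty words $W$ with sublists of the labels of dots minimally Northwest of $(i,j)$ in columns $[i,j]$, and matches these with the intersections of components from Theorem \ref{thm:safeshift}. Your ``crossing with a common letter'' and ``nonzero-elbows tile opening a new lettered pipe'' do not describe the correct set of fillings, so the bijection you propose to build would not line up with the components of the shift.
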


This is the inductive step by which theorem \ref{thm:HTformula} is proved,
where $i$ decreases from $n$ to $1$,
and for each $i$ we take $j$ from $n$ down to $i+1$.
This is exactly the sequence of shifts used in \cite[\S2.2]{Vakil},
though Vakil doesn't use the shifting formalism.

\subsection{Extension to $K$-theory}\label{ssec:KT}

\subsubsection{The $K$-tiles}\label{sssec:Ktiles}

To compute in equivariant $K$-theory, we need a new kind of label $W$ on the 
\emph{vertical} edges: it is a word in $\{1\}\cup \{A,B,\ldots\}$ (no $0$s),
no letters repeating, and if it contains $1$ then the $1$ must be at the end.
There are now four kinds of tiles, including the 
fundamentally new ``displacer'' tile:

\centerline{\epsfig{file=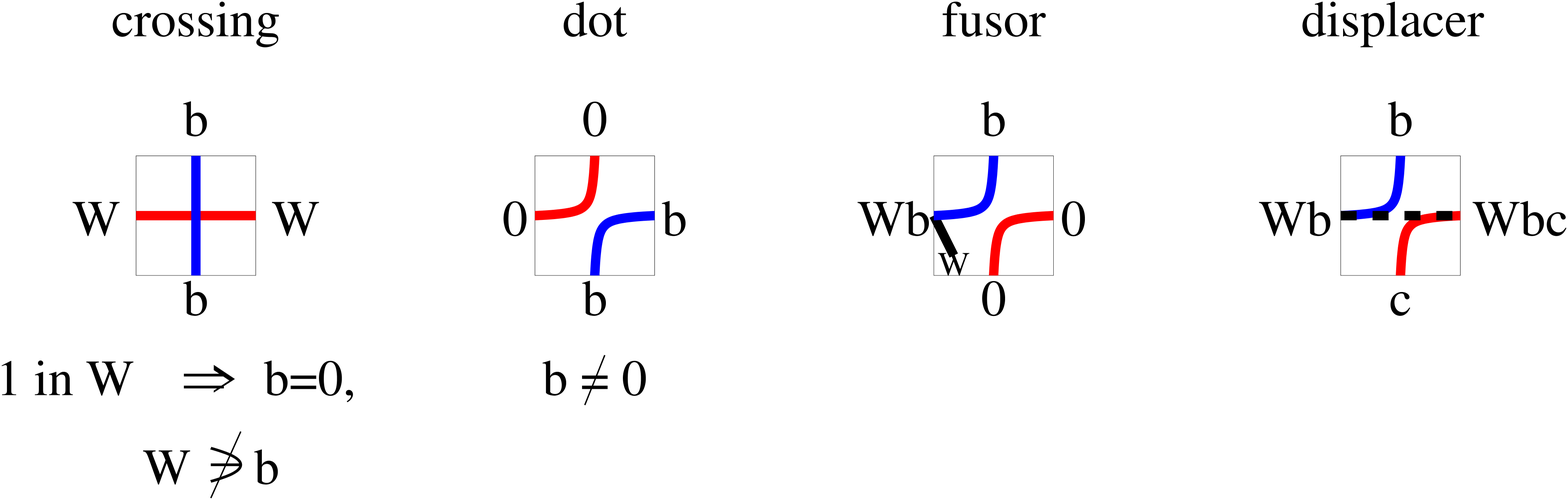,height=1.7in}}

Define a \defn{$K$-IP pipe dream} as one built from these tiles, with
the same conditions as on an IP pipe dream, plus one more nonlocal condition: 
{\bf two pipes appearing in the same word {\em must} cross once (and, of
course, not twice).} The meeting of two pipes in a fusor or displacer
tile doesn't count as a crossing. Note that IP pipe dreams are a subclass
of $K$-IP pipe dreams, where $|W|=1$ in the crossing tiles,
$|W|=0$ in the fusor tiles, and there are no displacer tiles.

\begin{figure}[htbp]
  \centering
  \epsfig{file=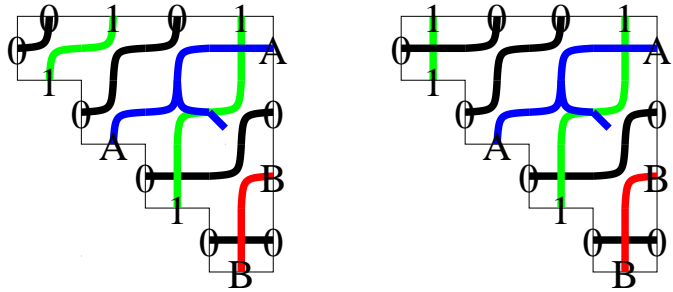,height=1.5in}
  \caption{The $K$-IP pipe dreams whose partial permutation is
    $1\mapsto 2,3\mapsto 4$ that didn't appear in figure
    \ref{fig:KIPpipesEx}. Each fuses an $A$ and a $1$ pipe,
    at $(2,4)$.}
  \label{fig:KIPpipesEx}
\end{figure}

Notice that if, on each edge of a $K$-IP pipe dream we erase every
label except the last one, we get a consistent system of unbroken pipes,
and missing labels can be reconstructed uniquely from the visible ones.
However, the nonlocal conditions that say which systems of pipes can be
extended to a $K$-IP pipe dream seem too complicated to be useful.

% Another variation is to erase the $0$-pipes entirely. 

As theorem \ref{thm:safeshiftintro} suggests, there are signs in the
$K$-formula, but (as predicted by Brion's theorem \cite{BrionPos})
they are determined by the parity of the codimension. 
Let $fusing(P)$ denote the sum over the fusor tiles, of the size $|W|$ 
of their word. (So $fusing(P)=0$ iff the $K$-IP pipe dream $P$ is
an ordinary IP pipe dream, since the presence of a displacer tile 
forces the appearance of a fusor tile with $|W|>0$ to the East of it.)

\begin{Theorem}\label{thm:Kformula}
  In $K^*(\Grkn)$, expanding $[\Pi_f]$ in the
  $\integers$-basis of opposite Schubert classes gives
  \begin{eqnarray*}
    [\Pi_f] &=& \sum_{P:\ f(P) = f \atop   \text{ $P$ has no equivariant tiles} }
    (-1)^{fusing(P)} [X^{\lambda(P)}] \\
    &=& \sum_\lambda (-1)^{\dim \Pi_f - \dim X^\lambda}
    \#\bigg\{P\ :\ {f(P)=f,\ \lambda(P)=\lambda \atop
    \text{ $P$ has no equivariant tiles}} \bigg\}\ [X^\lambda].
  \end{eqnarray*}
  If $P$ is a $K$-IP pipe dream, 
  then $fusing(P) = \dim \Pi_{f(P)} -  \dim X^{\lambda(P)}$,
  so this formula is positive in the sense of \cite{BrionPos}.
\end{Theorem}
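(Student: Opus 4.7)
The plan is to mirror the derivation of Theorems \ref{thm:Hformula} and \ref{thm:HTformula} but use the $K$-theoretic half of Theorem \ref{thm:safeshiftintro} at each step of the Vakil sequence. Starting from $\Pi_f$ and working through the sequence of safe shifts $\sh_{i\to j}$ (with $i$ decreasing from $n$ to $1$ and, for each $i$, $j$ decreasing from $n$ to $i+1$), I use the identity
$$ [\Pi_{f(s)}] \;=\; [\sh_{i\to j}\Pi_{f(s)}] \;=\; \sum_{\emptyset \neq S \subseteq C}(-1)^{|S|-1}[\Pi_{f(S)}] $$
whenever the South and East labels of the slice $s$ at $(i,j)$ are both $0$, and the trivial identity $[\Pi_{f(s)}] = [\Pi_{f(s')}]$ when the tile is forced (as in case (1) of Theorem \ref{thm:main}). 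Unwinding this recursion from $s(P,n,n)$ down to $s(P,1,1)$ expresses $[\Pi_f]$ as a signed sum indexed by choices of tile at each box, which is exactly a sum over $K$-IP pipe dreams $P$ with no equivariant tiles.

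The main combinatorial content is to match the subsets $S \subseteq C$ appearing in inclusion-exclusion with the new $K$-tiles. Singletons $|S|=1$ give the crossing and non-equivariant elbows tiles already used in Theorem \ref{thm:Hformula}. A subset $S$ with $|S|=m+1 \geq 2$ should correspond either to a fusor tile whose word $W$ has length $m$ (recording which pipes are being amalgamated together with the East-side pipe) or to a displacer tile accompanied by a fusor tile further East (the displacer splits off one of the fused pipes again). The nonlocal "two pipes sharing a word must cross once" rule then encodes exactly that the components indexed by $S$ really do have a nonempty common refinement $\Pi_{f(S)}$ of the expected codimension $|S|$; the reducedness of such intersections (property (3) of \S\ref{ssec:introIP}, from \cite{KLS}) is what licenses the bare inclusion-exclusion with no Tor corrections. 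Verifying that $\Pi_{f(S)}$ is the interval positroid variety associated to the new slice $s'$ produced by the tile, for every case of $|W|$ and every choice of displacer or not, is the step that requires the most care.

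For the sign, each fusor/displacer tile contributes $(-1)^{|W|} = (-1)^{|S|-1}$, and summing these contributions across all boxes gives $(-1)^{fusing(P)}$, matching the stated sign. The dimension identity $fusing(P) = \dim \Pi_{f(P)} - \dim X^{\lambda(P)}$ I would prove by induction on the same sequence of slices: each ordinary tile (crossing, non-equivariant elbows, or fusor with $|W|=0$) leaves $\dim \Pi_{f(s)}$ unchanged while moving from slice $s$ to slice $s'$, while placing a fusor tile with word of length $|W|$ or a displacer tile together with its mandatory Eastern fusor decreases the dimension by precisely the amount added to $fusing(P)$ at that step. Telescoping from $\Pi_{f(P)}$ at $(n,n)$ down to $X^{\lambda(P)}$ at $(1,1)$ gives the identity, whence the sign in front of $[X^{\lambda(P)}]$ depends only on the parity of $\dim\Pi_f - \dim X^{\lambda(P)}$, which is the Brion-positivity statement of \cite{BrionPos}.

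The hardest step will be the $K$-theoretic enhancement of Theorem \ref{thm:safeshiftintro} itself: namely, identifying the components $C$ of $\sh_{i\to j}\Pi_{f(s)}$ and all of their intersections $\Pi_{f(S)}$ explicitly as interval positroid varieties indexed by combinatorially constructible partial permutations, in a way that lines up precisely with the fusor/displacer rule. Once that local computation is in hand, everything else is an inductive bookkeeping exercise, with the reducedness of arbitrary intersections from \cite{KLS} ensuring that no schemy corrections appear.
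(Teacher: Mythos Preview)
Your approach is essentially the paper's: induct through the Vakil sequence, at each box with South and East labels both $0$ invoke the $K$-theoretic inclusion--exclusion from Theorem \ref{thm:safeshiftintro} (equivalently Theorem \ref{thm:safeshift}), and match the resulting subsets $S\subseteq C$ with tile placements via Proposition \ref{prop:nonuniquefill}. The paper takes a slightly longer route, proving the $K_T$ formula first using Proposition \ref{prop:KTshift} and then specializing $y_i=0$; your direct non-equivariant argument is a legitimate shortcut since the sweep term simply drops out.

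One point in your description needs correction. You write that a subset $S$ with $|S|\ge 2$ corresponds \emph{either} to a fusor tile \emph{or} to a displacer tile accompanied by a fusor further East. That is not how the branching works. At the kink $(i,j)$ with both labels $0$, the inclusion--exclusion branching is entirely encoded by the choice of fusor tile there: each nonempty sublist $S\subseteq C$ gives exactly one fusor tile (with West edge labeled $S$), and that is the only choice made at that box. Displacer tiles are never alternatives in the branching; they are \emph{forced} tiles that appear at subsequent boxes $(i,j'), j'<j$, once the East edge carries a multi-letter word (Proposition \ref{prop:uniquefill}, last case). So the sum over nonempty $S$ in the inclusion--exclusion is in bijection with the set of admissible fusor tiles at the kink, full stop; the displacers then peel off the letters one at a time as you move leftward, with no further branching until the word is exhausted.

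Your dimension-count argument for $fusing(P)=\dim\Pi_{f(P)}-\dim X^{\lambda(P)}$ is correct in spirit and matches the paper's, though the paper phrases it as an induction on partial pipe dreams proving the slightly more general identity $fusing(P)=\dim\Pi_{f(P)}-\dim\Pi_{g(P)}+\#(\text{equivariant tiles})$, which specializes to yours when there are no equivariant tiles.
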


Combining the nonequivariant pipe dreams from 
figures \ref{fig:IPpipesEx} and \ref{fig:KIPpipesEx}, we get
$$ [\Pi_{1\mapsto 2,3\mapsto 4}] = [X^{(2)}] + [X^{(1,1)}] - [X^{(1)}]
\qquad \in K(\Grkn). $$
Notice that as compared to $H^*$, the extra terms in $H^*_T$ come from
larger varieties $X^\lambda$, whereas the extra terms in $K$ come from
smaller varieties.

\subsubsection{$T$-equivariance}

The base ring $K_T(pt)$ of $T$-equivariant $K$-theory is the Laurent
polynomial ring $Rep(T) \iso \integers[\exp(\pm y_1),\ldots,\exp(\pm y_n)]$, 
written thus for comparison to equivariant cohomology. 
Here $\exp(y_i)$ denotes the $K_T$-class of the one-dimensional
representation with character $y_i$.

Define the \defn{$K_T$-weight} $wt_K(P)$ of a $K$-IP pipe dream as
$$ wt_K(P) := \prod_{i<j}
\begin{cases}
  1 - exp(y_j-y_i) & \text{ if the tile at $(i,j)$ is the equivariant tile
  (all $0$s)}\\
  \phantom{0} + exp(y_j-y_i) & \text{ if the tile at $(i,j)$ has $0$s
    on its South and East only} \\
 1  \phantom{0 +1} & \text{ otherwise}.
\end{cases}
$$
The special role of the tiles with $0$ on the South and East becomes
clear in \S \ref{ssec:placing}.

\begin{Theorem}\label{thm:KTformula}
  In $K_T^*(\Grkn)$, the expansion of $[\Pi_f]$ in the
  $\integers[\exp(\pm y_1),\ldots,\exp(\pm y_n)]$-basis 
  of opposite Schubert classes is
  $$ [\Pi_f] = \sum_{P:\ f(P) = f} (-1)^{fusing(P)} wt_K(P)\ [X^{\lambda(P)}] $$
  which is positive in the sense predicted in \cite[Corollary 5.1]{AGM}.

  Specializing each $y_i=0$ recovers the previous theorem.
  Dropping the $fusing(P)>0$ summands, and taking the lowest-degree term
  in the $(y_i)$, recovers the $H^*_T$-formula from the $K_T$.
\end{Theorem}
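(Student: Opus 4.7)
The plan is to prove Theorem \ref{thm:KTformula} by the same inductive scheme that underlies Theorems \ref{thm:HTformula} and \ref{thm:Kformula}: run the Vakil sequence of safe shifts from Theorem \ref{thm:main}, but at each step replace the cohomological shift/sweep formula by a single $K_T$-level identity that specializes correctly to both predecessors.

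The central lemma is the following $K_T$-upgrade of Theorem \ref{thm:safeshiftintro}: for a safe shift $(i,j)$ with $(i+1,j)$ essential for $f$,
$$[\Pi_f] \;=\; (1 - e^{y_j - y_i})\,[\sw_{i\to j}\Pi_f] \;+\; e^{y_j - y_i}\sum_{\emptyset \ne S \subseteq C}(-1)^{|S|-1}\,[\Pi_{f(S)}]$$
in $K_T(\Grkn)$. Its leading-order-in-$y$ term reproduces the $H^*_T$-identity $(y_i-y_j)[\sw\Pi_f] + \sum_{f'\in C}[\Pi_{f'}]$, and setting each $y_i = 0$ collapses the first summand and yields the non-equivariant $K$-identity of Theorem \ref{thm:safeshiftintro}. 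To prove this lemma I would work on the $T$-equivariant family $\sw_{i\to j}\Pi_f \to \AA^1$, on whose base the one-parameter subgroup $\exp(t e_{ij})$ acts with character $e^{y_j - y_i}$; from the short exact sequence relating the total space to its generic (translated) fiber $\Pi_f$ and its special fiber $\sh_{i\to j}\Pi_f$, together with the Mayer--Vietoris/inclusion-exclusion expansion of $[\sh_{i\to j}\Pi_f]$ already contained in Theorem \ref{thm:safeshiftintro}, the claimed $K_T$-identity drops out.

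With this identity in hand, the theorem follows by induction on the Vakil sequence, taking $i$ from $n$ down to $1$ and, for each $i$, $j$ from $n$ down to $i+1$. At each position $(i,j)$, extensions of the current slice biject with tile choices in the schema: in case (1) of Theorem \ref{thm:main} the tile is forced, $f$ is unchanged, and we multiply by $1$; in case (2) the equivariant tile supplies the $(1 - e^{y_j-y_i})[\sw\Pi_f]$ summand, while the remaining viable tiles carry the subset-indexed terms $e^{y_j-y_i}(-1)^{|S|-1}[\Pi_{f(S)}]$. The higher-order subsets $|S| \geq 2$, whose geometric meaning is an intersection of multiple shift components, appear combinatorially as fusor tiles with $|W| = |S|-1$ together with the displacer tiles forced to their west by vertical word-propagation; tracking weights and signs, the product of per-step contributions telescopes into $(-1)^{fusing(P)} wt_K(P)$. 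Both promised specializations---$y_i = 0$ and lowest-order-in-$y$---then reduce to Theorems \ref{thm:Kformula} and \ref{thm:HTformula} step by step.

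The principal obstacle is combinatorial rather than geometric: verifying that the local inclusion-exclusion, governed at each step by the component set $C$ of Theorem \ref{thm:safeshiftintro}, assembles globally into precisely the fusor/displacer syntax of $K$-IP pipe dreams, so that the higher intersection $\Pi_{f(S)}$ for $|S| > 1$ is matched by a unique fusor-tile word of length $|S|-1$ (with displacers filling in the column below it). The cleanest route is to enlarge slices so that their vertical edges carry the $K$-level word labels $W$, reprove Theorem \ref{thm:main} for such $K$-slices, and establish a bijection between viable $K$-slice extensions at $(i,j)$ and the terms of the $K_T$-shift/sweep identity. Granted that bijection, the identity $fusing(P) = \dim \Pi_f - \dim X^{\lambda(P)}$ is dimension bookkeeping along the recursion, and positivity in the sense of \cite[Corollary 5.1]{AGM} is immediate since the surviving weight factors $1 - e^{y_j-y_i}$ and $e^{y_j-y_i}$ are themselves of the predicted positive form.
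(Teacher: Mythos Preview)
Your proposal is essentially correct and matches the paper's own argument: it too proves a strengthened identity over partial $K$-IP pipe dreams (pipe dreams ``below $(i,j)$''), inducts through the Vakil sequence, and at each $00$-kink invokes exactly the $K_T$-shift/sweep identity you wrote down, which the paper obtains by combining proposition~\ref{prop:KTshift} with theorem~\ref{thm:safeshift}. The matching of terms to tiles is handled by propositions~\ref{prop:uniquefill} and~\ref{prop:nonuniquefill}, which is precisely the ``bijection between viable $K$-slice extensions and the terms of the $K_T$-identity'' you anticipate needing.

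Two small corrections to your description. First, the displacer tiles forced by a fusor lie in the same \emph{row} to its \emph{west}, not in a column below it (see figure~\ref{fig:Krays} and the text ``a fusor tile with $|W|>0$ to the East of it''). Second, your proof of the $K_T$-identity via the family over $\AA^1$ needs two nontrivial inputs that you should name: that $\wt\sw_{i\to j}\Pi_f \to \sw_{i\to j}\Pi_f$ is degree~$1$ (so the structure sheaf pushes forward correctly), and that the sweep has rational singularities (so there are no higher direct images). The paper secures the first via $T$-convexity (theorem~\ref{thm:Tconvex}(4)) and the second because the sweep is again a positroid variety (proposition~\ref{prop:safesweep}); without these, the short exact sequence argument does not yield the clean identity in $K_T$.
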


For example, the $K_T$-weights of the $K$-IP pipe dreams from
figures \ref{fig:IPpipesEx} and \ref{fig:KIPpipesEx} are (in order)
$$ \exp(y_2-y_4),
\qquad \exp(y_1-y_2),
\qquad 1 - \exp(y_1-y_2),
\qquad 1 - \exp(y_2-y_4), $$
$$ \exp(y_1 - \cancel{y_2} + \cancel{y_2}-y_4),
\qquad (1-\exp(y_1-y_2))\exp(y_2-y_4) = \exp(y_2-y_4) - \exp(y_1-y_4). $$

\subsection{Outline of the paper}

In \S \ref{sec:IP} we recall the basic properties we need of
interval positroid varieties, and in particular define their essential 
and crucial boxes.
In \S \ref{sec:shifting} we give some results about geometric and 
combinatorial shifting, and prove theorem \ref{thm:safeshiftintro}
about safe shifts of positroid varieties.
In \S \ref{sec:main} we prove the main theorem, that $K$-IP pipe dreams serve 
as a record of the degeneration process defined by Vakil \cite{Vakil}, 
in enough detail to recover the $K_T$-class.  
In \S \ref{sec:puzzles} we connect IP pipe dreams to the
equivariant puzzles of \cite{KT}.

\junk{
In \S \ref{sec:Kogan} we show that computing the $K$-classes of interval
positroid varieties is the same problem as computing Kogan Schubert
calculus (in $K$-theory), for which we already gave an algorithm in \cite{KY}.
}
The combinatorial difference between the pipe dream calculus laid out here,
as contrasted with the checker games of \cite{Vakil},
% and the marching of \cite{KY}, 
is that an IP pipe dream serves as a $2$-dimensional record
of a $(2+1)$-dimensional checker game. % or march.

\section*{Acknowledgments}

These ideas have been a long time brewing, and discussed fruitfully
with many people, in particular Nicolas Ford, David Speyer, Ravi Vakil, 
Alex Yong, and especially Mathias Lederer. Our deepest thanks go to
Franco Saliola for his virtuosic implementation of IP pipe dreams in 
{\tt sage}. 

We first described the connection between Vakil's degeneration and
Erd\H os-Ko-Rado shifting in the unpublished preprint \cite{unpub},
whose results are fully subsumed here.

\section{Interval positroid varieties}\label{sec:IP}

Most of the definitions in this section (though not the one in its title) 
are from \cite{KLS}.

\subsection{Positroid varieties and their covering relations}
\label{ssec:positroid}

A \defn{juggling pattern of length $n$} is a bijection
$J:\integers \to \integers$ such that $f(i)-i$ is periodic with period $n$,
and $f(i)-i \geq 0$ for all $i\in \integers$. 
The \defn{siteswap} of $J$ is the $n$-tuple $f(1)-1,\ldots,f(n)-n$, and 
obviously $J$ can be reconstructed from its siteswap.
The average $k$ of the siteswap turns out to be the number of orbits of $J$ that
aren't fixed points, called the \defn{ball number}. Hereafter fix $n$ and $k$.

Call $J$ \defn{bounded}\footnote{%
  While $f(i)\geq i$ is natural from the juggling point of view, 
  in that it says balls land after they are thrown, 
  the $f(i) \leq i+n$ condition is already violated by the
  standard $3$-ball pattern $n=1$, $f(i)=i+3\ \forall i$.}
if $f(i)-i \leq n$ for all $i\in \integers$.
For such $J$, define the following variety $\wt\Pi_J \subseteq M_{k\times n}$
by rank conditions on all {\em cyclic} intervals:
\begin{eqnarray*}
  r(J)_{ij} &:=& \big|[i,j]\big| 
  - \#\{\text{$1$s in $J$'s matrix that are weakly Southwest of $(i,j)$}\},
  \quad i \leq j \leq i+n \\
  \wt\Pi_J &:=& \{M :\ M \in M_{k\times n},\ 
%  &&\qquad\qquad\qquad\quad
  \rank(\text{columns }[i,j] \bmod n) \leq r(J)_{ij},\quad
  \forall i\in \integers, j\in [i,i+n] \}
\end{eqnarray*}
The \defn{positroid variety} $\Pi_J \subseteq \Grkn$ is defined as
$$ \Pi_J := \{\rowspan(M) : M \in \wt\Pi_J,\ \rank(M) = k\}. $$
All the properties claimed in \S \ref{ssec:introIP} of interval
positroid varieties are in fact true of positroid varieties, as proven
in \cite[\S 5.4--5.5]{KLS}.

\newcommand\ess{{\rm ess}}

We depict $J$ as an infinite, periodic permutation matrix, with \defn{dots}
in the boxes\footnote{%
  This may well be the transpose of the convention you are used to!
\junk{
  Because of this, our flag manifold later will be $B_- \dom GL(n)$,
  not $GL(n)/B$. \comment{Unless I remove the Kogan stuff}}
}
$(i,J(i))$, $i\in \integers$. To construct the \defn{diagram} of $J$,
we cross out all boxes strictly to the West or South (but not both)
of each dot, leaving the diagram as the remainder. The \defn{essential set}
$\ess(J)$ is the set of Northeast corners of the diagram (note that the diagram
has one unbounded component, stretching North and East to infinity
with no Northeast corner). It is not difficult to prove 
(in analogy with \cite{Fulton92}) that
$$   \wt\Pi_J = \{M :\ M \in M_{k\times n},\ 
  \rank(\text{columns }[i,j] \bmod n) \leq r(J)_{ij},\quad
  \forall (i,j) \in \ess(J)\}. $$

\begin{figure}[htbp]
  \centering
  \epsfig{file=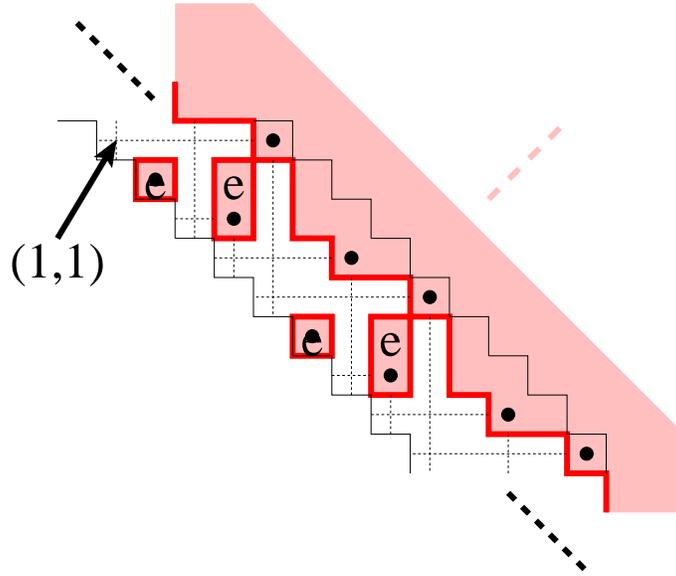,height=3in}
  \caption{The diagram of the juggling pattern with siteswap $4013$,
    in pink.
    Up to $4$-periodicity, it has two essential boxes, at $(2,2)$ and $(2,4)$,
    whose associated rank conditions are $\rank [$column $2] \leq 0$,
    $\rank [$columns $2,3,4] \leq 1$.}
  \label{fig:jugdiagram}
\end{figure}

More specifically, the ``essential'' set of rank conditions are those that
are not implied by single other rank conditions. In matroid terminology,
a rank condition not implied by one from a larger subset is a \defn{flat},
and a rank condition not implied by one from a smaller subset is \defn{cyclic}
(a union of ``circuits''); don't confuse this with our ``cyclic intervals''!
The cyclic flats are of additional interest 
because they form a lattice \cite{BdM}. (There is a slight confusion that
the whole $[n]$ may be a cyclic flat, but will not be an ``essential''
interval.)

However, it is possible for a cyclic flat's condition 
to be implied by a {\em combination}
of other conditions, in two ways. If $F = F_1 \coprod F_2$ and 
$\rank(F) = \rank(F_1) + \rank(F_2)$, then $F$ is called \defn{not connected}.
(Example: let $k=3$, $n=6$,
$$ J(i) = 
   \begin{cases} i+5 &\text{$i$ odd} \\ i+1 & \text{$i$ even.} \end{cases} $$
Then $[1,4] = [1,2] \coprod [3,4]$ is not connected. 
Again, don't confuse this with ``contiguous'', which the numbers $[1,4]$ 
certainly are!) The same can happen in the dual matroid, in which case $F$ is
\defn{not nnected}. Following \cite{FS}, call the rank conditions associated 
to the connected and nnected flats of a matroid the \defn{crucial conditions}.
\junk{
Check this:

(In the matroids implicitly considered in \cite{Fulton92},
% I should reference that Stanley section I'll put in later
the essential conditions are already crucial.)
}

The positroid varieties form a stratification of $\Grkn$, 
and a ranked poset where the rank is given by the dimension of the variety.
More specifically, the assignment $\Pi_J \mapsto J$ gives an 
anti-isomorphism of this poset to an order ideal in affine $S_n$ Bruhat order
\cite[Theorem 3.16]{KLS}.
In this poset, we have a covering relation $J \gtrdot J'$ iff $J,J'$
agree away from rows $i$ and $j$ (mod $n$), if the dot $(i,J'(i))$ is
Northeast of the dot $(j,J'(j))$ (which is well-defined, even with periodicity),
and there are no dots in $J$ or $J'$ in the interior of the rectangle
with those as Northeast and Southwest corners. Also,
the difference $r(J) - r(J')$ of the rank functions
is an upper triangular periodic matrix of $0$s and $1$s,
with $1$s in the rectangle $[i+1,j] \times [J(i),J(j)-1]$ (and its
periodic copies). 

\subsection{Several classes of positroid varieties, including interval}

Because of its periodicity, the affine permutation matrix of a bounded
juggling pattern $J$ is determined by what it does in rows $i=1,\ldots,n$,
whose intersection with the strip $i \leq j \leq i+n$ is a parallelogram.
Cut it into a left half ($j\leq n$) and right half ($j>n$).
We can pick out several important classes of positroid varieties, 
with decreasing specificity:
\begin{itemize}
\item \emph{Opposite Schubert varieties.} If the dots run NW/SE in the
  entire parallelogram.
\item \emph{Richardson varieties.} If the dots run NW/SE in each 
  of the two triangular halves.
\item \emph{Interval positroid varieties.} If the dots run NW/SE in
  the right half.
\end{itemize}

We now prove these characterizations, starting with the third.

\begin{Proposition}\label{prop:intdefsagree}
  If $\Pi_f$ is the interval positroid variety associated with the partial 
  permutation $f$, then there exists a unique bounded juggling pattern $J(f)$
  such that $\Pi_f = \Pi_{J(f)}$. Moreover, $J(f)$'s permutation matrix is 
  characterized by having $f$ on the triangle $1 \leq i \leq j \leq n$,
  and has $k$ dots arranged NW/SE on the triangle $i \leq n < j \leq i+n$.
\end{Proposition}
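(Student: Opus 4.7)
The plan is to construct an explicit $J(f)$ satisfying the claimed properties, verify $\Pi_{J(f)} = \Pi_f$ by rank manipulations, and obtain uniqueness from the bijection between bounded juggling patterns and positroid varieties in \cite[Theorem 3.16]{KLS}.

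Write $\{i_1 < \cdots < i_k\} := [n] \setminus \mathrm{dom}(f)$ and $\{s_1 < \cdots < s_k\} := [n] \setminus \mathrm{image}(f)$, and define $J(f)$ to agree with $f$ on $\mathrm{dom}(f)$, to send $i_t \mapsto n + s_t$ for each $t$, and to extend $n$-periodically; the monotone pairing $i_t \leftrightarrow n + s_t$ is exactly a NW/SE arrangement in the right triangle $i \le n < j \le i+n$. For the dot $(i_t, n + s_t)$ to lie in this triangle I need $n + s_t \le n + i_t$, i.e.\ $s_t \le i_t$; this inequality follows from upper-triangularity of $f$, since any $c \in \mathrm{image}(f) \cap [1, i_t]$ equals $f(r)$ for some $r \le c \le i_t$, placing $r$ in the size-$(i_t - t)$ set $\mathrm{dom}(f) \cap [1, i_t]$, and thus $|\mathrm{image}(f) \cap [1, i_t]| \le i_t - t$, $|S \cap [1, i_t]| \ge t$, $s_t \le i_t$. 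Bijectivity of $J(f)$, its $n$-periodicity, and the bound $0 \le J(f)(i) - i \le n$ are then immediate.

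For the equality $\Pi_{J(f)} = \Pi_f$, the inclusion $\Pi_{J(f)} \subseteq \Pi_f$ is immediate because $r(J(f))_{ij} = r(f)_{ij}$ on every non-cyclic box $1 \le i \le j \le n$ by construction. For the reverse inclusion, I check that any rank-$k$ matrix $M$ satisfying the interval conditions $r(M)_{ij} \le r(f)_{ij}$ also satisfies $r(M)_{ij} \le r(J(f))_{ij}$ on every cyclic box $(i,j)$ with $n < j \le i+n$. Subadditivity of column-rank gives
\[
  r(M)_{ij} \le r(M)_{i,n} + r(M)_{1, j-n} \le r(f)_{i,n} + r(f)_{1, j-n},
\]
and also $r(M)_{ij} \le k$, so $r(M)_{ij} \le \min(r(f)_{i,n} + r(f)_{1, j-n},\ k)$. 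The remaining step is the combinatorial identity $r(J(f))_{ij} = \min(r(f)_{i,n} + r(f)_{1, j-n},\ k)$, which I prove by enumerating dots of $J(f)$ weakly SW of $(i, j)$ in one period: the identity reduces to showing that the number of right-triangle dots simultaneously satisfying $i_t \ge i$ and $s_t \le j - n$ equals $\max(0,\ r(f)_{i,n} + r(f)_{1, j-n} - k)$, which falls out directly from the NW/SE pairing $i_t \leftrightarrow s_t$.

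Uniqueness of $J(f)$ is immediate from the bijection $J \leftrightarrow \Pi_J$ of \cite[Theorem 3.16]{KLS}. The explicit form in the second sentence of the proposition is part of the construction: the upper triangle must carry $f$ by the rank conditions, and among all completions placing $k$ right-triangle dots in rows $\{i_t\}$ and columns $\{n + s_t\}$, the NW/SE requirement selects the unique monotone matching. The main obstacle is the cyclic-rank identity above; once it is in hand, the equality $\Pi_{J(f)} = \Pi_f$ follows from the standard rank subadditivity estimates.
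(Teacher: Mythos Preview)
Your argument is correct and follows the same outline as the paper's: construct $J(f)$ by the unique NW/SE completion in the right triangle, verify boundedness via upper-triangularity of $f$, and then check that the cyclic rank conditions are implied by the interval ones.  The construction and boundedness arguments are essentially identical to the paper's (the paper phrases the inequality $s_t \le i_t$ as $C'_f \le R'_f$ in Bruhat order on complements, but it is the same computation).

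The one point of genuine difference is the verification of $\Pi_{J(f)} = \Pi_f$.  The paper argues structurally: it observes that any \emph{essential} box $(i,j+n)$ in the second triangle has no second-triangle dots weakly SW of it, so the associated cyclic rank condition splits as $r(J)_{i,j+n} = r(J)_{i,n} + r(J)_{1,j}$, making the cyclic flat disconnected and hence not crucial; since all crucial conditions live in the first triangle, $\Pi_J = \Pi_f$.  You instead compute $r(J(f))_{ij} = \min\bigl(r(f)_{i,n} + r(f)_{1,j-n},\, k\bigr)$ for \emph{every} cyclic box and use rank subadditivity together with the global bound $r(M) \le k$ to deduce the inclusion $\Pi_f \subseteq \Pi_{J(f)}$ directly.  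Your route is more elementary---it never needs to identify which boxes are essential or invoke the connected-flat machinery---at the cost of the small extra computation that $\{t : i_t \ge i\}$ and $\{t : s_t \le j-n\}$ are a final and initial segment of $[k]$ of sizes $r(f)_{i,n}$ and $r(f)_{1,j-n}$ respectively, whose overlap is $\max(0,\, r(f)_{i,n} + r(f)_{1,j-n} - k)$.  The paper's route is shorter once the crucial/essential framework is in place, and has the conceptual payoff of explaining \emph{why} the cyclic conditions are redundant (they are disconnected flats), which feeds into the paper's later use of crucial boxes.
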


\begin{proof}
  Call the two triangles ``$f$'s triangle'' and ``the second triangle''.
  Let $C_f,R_f \in {[n]\choose n-k}$ be the set of nonzero columns
  and rows of $f$. Then $C_f \geq R_f$ in Bruhat order, 
  by the condition that $f$ is upper triangular.

  We need to construct $J := J(f)$. Copy $f$ into $f$'s triangle, and 
  cross out the complete row and column of each of $f$'s $n-k$ dots.
  The square $i \in [1,n], j\in [n+1,2n]$ will have $k$ remaining
  rows and columns, $r_1 < r_2 < \ldots < r_k \subseteq [1,n], 
  c_1 < \ldots < c_k \subseteq [n+1,2n]$. 
  We'll place dots NW/SE so in matrix entries $(r_i,c_i)$, $i=1\ldots k$, 
  and copy them periodically to $(r_i + Mn, c_i + Mn), M\in \integers$.

  Now we claim that for each $i$, $c_i - n \leq r_i$, 
  i.e. that the $J$ so constructed is a {\em bounded} juggling pattern.
  To see this, let $R'_f = [n] \setminus R_f$ be the rows of the new dots,
  and $C'_f = [n] \setminus C_f$ be the columns minus $n$.
  Since $C_f \geq R_f$, we learn $C'_f \leq R'_f$, i.e. $c_i - n \leq r_i$
  for each $i$. 

  To show $\Pi_f = \Pi_J$, we will show there are no 
  crucial rank conditions in the second triangle.
  Let $(i,j+n)$ be an ``essential'' box there, corresponding to 
  the cyclic interval $[i,j] := [i,n] \coprod [1,j]$.
  (Note that $j<i-1$, since the box $(i-1,j+n)$ North of $(i,j+n)$ must
  both be crossed out by some dot strictly to its East $(i-1,j+n+m)$,
  so $j+1 \leq j+m \leq i-1$.) Then since there are no dots Southwest
  of $(i,j+n)$ in the second triangle (by the NW/SE condition),
  its rank condition is
  \begin{eqnarray*}
    r(M)_{ij}
    &=& |[i,n]| + |[1,j]| 
    - \#\{\text{dots in $J$ that are weakly Southwest of $(i,j+n)$}\} \\
    &=& |[i,n]| + |[1,j]| 
    - \#\{\text{weakly SW of $(i,n)$}\} 
    - \#\{\text{weakly SW of $(1+n,j+n)$}\} \\
    &=& r(M)_{in} + r(M)_{1j}
  \end{eqnarray*}
so the cyclic flat $[i,n] \coprod [1,j]$ is not a connected flat.
\end{proof}

If $J$'s crucial conditions are all {\em intervals}, not just cyclic intervals,
then $\Pi_J$ is obviously an interval positroid variety as defined in
\S \ref{ssec:introIP}. The example given in the last section, whose 
crucial intervals are $[1,2],[3,4],[5,6]$ show that the ``essential''
cyclic intervals may be properly cyclic ($[5,2]$ in that example).

\junk{
The following is WRONG!! Ground state and interval are different.

Call a bounded juggling pattern $J$ \defn{ground state} if the following 
conditions hold:
\begin{enumerate}
\item $J(-\naturals) = -\naturals \cup [1,k]$, 
  where $k$ is the ball number. In the juggling interpretation,
  this says that the balls in the air at time $0$ are coming down 
  as soon as collectively possible without colliding.
\item If $S$ is $J$'s siteswap, then $Sk$ is again a siteswap,
  of a ground state juggling pattern.
\item $J$'s essential set is contained inside the triangles
  $\{ (i,j) : 1\leq i \leq j \leq n \bmod n \}$.
\item The dots in $J$'s permutation matrix, outside the triangles 
  $1\leq i \leq j \leq n \bmod n$, are arranged Northwest/Southeast.
  In particular, $J$ is uniquely determined by the partial permutation $f$
  given by restricting and corestricting $J$ to $[n]$.
\item $\Pi_J$ is an interval positroid variety.
\end{enumerate}
%(This condition comes from the juggling world -- it says that one can
%go directly from the basic $k$-ball pattern $i \mapsto i+k$ into the 
%pattern $J$, with no interstitial throws.)

\begin{Proposition}\label{prop:partialperm}
  These five conditions are equivalent, 
  and $\Pi_J$ is the interval positroid variety $\Pi_f$.
\end{Proposition}

\newcommand\state{{\rm state}}

\begin{proof}
  Define the \defn{(juggling) state of $J$ at time $i$} as
  $$ \state_i(J) := \left\{m > 0\ :\ m+i \in J(-\naturals) \right\}.$$
  By the boundedness assumption, $\state_i(J) \in {[n]\choose k}$,
  and one can think of $J$ as giving a length $n$ loop
  $(\sigma_0,\sigma_1,\ldots,\sigma_n = \sigma_0)$ on $[n]\choose k$.
  It is easy to reconstruct $J$ from this walk:
  $$ \sigma_i \cup \{0\} 
  = \{J(i)\}\ \coprod\ \{j\in\naturals\ :\ j+1 \in \sigma_{i-1} \} $$
  If one starts from a loop $(\sigma_0,\ldots,\sigma_n = \sigma_0)$,
  as long as that $\coprod$ is indeed a disjoint union, the $J$ 
  constructed from it does indeed have $(\sigma_0,\ldots,\sigma_n)$
  as its states.
  One reference for this point of view, very close to Postnikov's
  ``Grassmann necklace'' indexing of positroids, is \cite{Greg}.

  (1) $\implies$ (2).
  The ground state assumption is exactly that $\state_0(J) = [1,k]$.
  So we can safely add the state $[1,k]$ at the end of the loop
  (now length $n+1$), which amounts to adding $k$ to the siteswap.

  (2) $\implies$ (1).
  Add $k$ at the end of the siteswap, $k$ times, to make $J'$. 
  Now it's easy to see that $[1,k] \subseteq \sigma_{n+k}(J)$, 
  so $[1,k] = \sigma_{n_k}(J) = \sigma_{0}(J)$.
  
  (1) $\implies$ (3). Let $(i,j)$ with $n < i \leq j < 2n$ be an
  essential box.

  (3) $\implies$ (1).
  If $S$ is $J$'s siteswap

\end{proof}
}

This construction suggests we define \defn{the diagram of a partial
  permutation $f$} by crossing out strictly West and South from each dot,
and also crossing out entirely any row or column with no dot (as secretly,
that dot is hiding in the second triangle of $J(f)$). Then as before, $f$'s
``essential'' boxes are the Northeast corners of the diagram.

The following is essentially well-known; we only include it to fix notation.

\begin{Lemma}\label{lem:oppSchubert}
  If $f$'s dots are in the first $n-k$ rows, running NW/SE, then
  $\Pi_f = X^\lambda$ where the partition $\lambda$ is constructed
  from $f$'s columns, {\em read backwards}, as follows: 
  Start at the point $(0,-k)$
  in the fourth quadrant of the Cartesian plane, and move right for
  each nonzero column, and up for each zero column.
\end{Lemma}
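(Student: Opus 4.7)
The plan is to combine the juggling-pattern extension of Proposition~\ref{prop:intdefsagree} with a direct essential-set computation. Under the hypothesis, $f$'s dots $(i,c_i)$, $i=1,\ldots,n-k$, run NW/SE in the first $n-k$ rows. The second-triangle dots of $J(f)$ lie in rows $[n-k+1,n]$ in the shifted columns $\{d_1+n<\cdots<d_k+n\}$, where $d_1<\cdots<d_k$ enumerate the empty columns of $f$; since $c_{n-k}\le n<d_1+n$, these continue the NW/SE string seamlessly. Thus $\Pi_f=\Pi_{J(f)}$ is an opposite Schubert variety in $\Grkn$, and only the partition $\lambda$ remains to be identified.

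To pin it down I would compute the essential set directly from the diagram. In row $i\le n-k$ the un-crossed cells are exactly $\{c_i,c_{i+1},\ldots,c_{n-k}\}$ (the dot at $c_i$ together with the dot-columns weakly to its east). Since every such cell also sits in row $i-1$ for $i\ge 2$, all NE corners live in row~$1$, where they are the boxes $(1,c_r)$ with either $c_r=n$ or $c_r+1\in\{d_1,\ldots,d_k\}$. The full collection of row-$1$ rank conditions then reads
\[\rank(\text{cols }1,\ldots,j)\;\le\;\#\big(\{d_1,\ldots,d_k\}\cap[1,j]\big)\qquad\text{for all } j\in[1,n],\]
or equivalently, writing $S(M)\subseteq[n]$ for the pivot-column set of the reduced row echelon form of a matrix representative of $\rowspan(M)\in\Pi_f$ and $D=\{d_1,\ldots,d_k\}$, this becomes $\#(S(M)\cap[1,j])\le\#(D\cap[1,j])$ for every $j$. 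This is the componentwise inequality $S(M)\ge D$, whose solution locus is precisely the closure of the opposite Schubert cell with pivot set $D$.

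It remains to verify that the path recipe in the lemma is the standard bijection between the $k$-subset $D\subseteq[n]$ and the partition $\lambda$ under the paper's $\dim X^\lambda=|\lambda|$ convention. Reading $f$'s columns from right to left produces a right-step at each of the $n-k$ dot columns and an up-step at each of the $k$ columns $d_j$; starting at $(0,-k)$ the path ends at $(n-k,0)$, and a direct count shows the partition NW of it inside $[0,n-k]\times[-k,0]$ has row lengths $\lambda_r=n-k+r-d_r$. This is exactly the standard partition associated to the pivot set $D$ in the paper's dimension convention, confirming $\Pi_f=X^\lambda$.

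The only real obstacle is bookkeeping with conventions: the paper uses $\dim X^\lambda=|\lambda|$ (not codimension), draws $\lambda$ in the fourth quadrant starting at $(0,-k)$, and indexes rank conditions by column-intervals rather than initial segments of the standard flag. One small example such as $n=3,\ k=1$ suffices to nail down orientations and the matching between the zero-column set and the partition's rows; after that, each step above is a mechanical verification.
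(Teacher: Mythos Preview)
Your proof is correct and takes essentially the same route as the paper: compute the diagram of $f$, observe that all essential boxes lie in row~$1$ so the defining rank conditions are on intervals $[1,j]$, and then identify the resulting opposite Schubert variety with the partition $\lambda$ given by the path recipe. Your treatment is considerably more explicit than the paper's (which dismisses the identification as ``easily checked''); the only caveat is that your opening paragraph invokes the NW/SE characterization of opposite Schubert varieties, which in the paper's logical order is established \emph{by} this lemma and the next---but since your essential-set computation is self-contained and independently yields both the opposite-Schubert conclusion and the specific $\lambda$, this is harmless.
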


\begin{proof}
  Crossing out South and East from $f$'s dots, and crossing out the $k$
  empty rows beneath, already only leaves a partition in the Northeast corner.
  Crossing out empty columns cuts that into a bunch of partitions,
  each of which reach up to the top row. Hence the essential conditions
  are all on intervals $[1,j]$, and so define an opposite Schubert variety,
  easily checked to be this one.
\end{proof}

\begin{Lemma}\label{lem:Richardson}
  Let $f$ be the left half of $J$'s parallelogram.
  The unique smallest Richardson variety containing $\Pi_J$ 
  is $X_\mu \cap X^\nu$, where $\nu$ is constructed from $f$'s nonzero columns
  as in lemma \ref{lem:oppSchubert}, and $\mu$ by using $f$'s nonzero rows.
  The containment $X_\mu \cap X^\nu \supseteq \Pi_J$ is an equality
  iff the dots run NW/SE in each of the
  left and right halves of $J$'s parallelogram.
\end{Lemma}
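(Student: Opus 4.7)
The plan is to identify separately the smallest opposite Schubert variety and the smallest Schubert variety containing $\Pi_J$, show their intersection is $X_\mu \cap X^\nu$ for the claimed partitions, and then deduce the equality criterion. The reduction goes as follows: if any Richardson $X_{\mu'} \cap X^{\nu'}$ contains $\Pi_J$, then each factor must individually contain $\Pi_J$, so by minimality of $X_\mu$ and $X^\nu$ we have $X_{\mu'} \cap X^{\nu'} \supseteq X_\mu \cap X^\nu$. The minima $X_\mu$ and $X^\nu$ exist and are unique because $\Pi_J$ is irreducible: its generic point sits in a unique Schubert (respectively opposite Schubert) cell, whose closure is the desired minimum.

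To identify $\nu$, I would note that opposite Schubert varieties in $\Grkn$ are cut out purely by rank conditions on initial intervals $[1,j]$, so the smallest $X^\nu \supseteq \Pi_J$ is determined by the values $r(J)_{1,j}$ for $j \in [1,n]$. For such $j$, a direct check on the periodic $\ZZ \times \ZZ$ plane shows that every dot of $J$ weakly SW of $(1,j)$ already lies in $f$'s triangle: dots of $J$'s right half have column $> n \geq j$; periodic shifts $(r+mn, c+mn)$ with $m \geq 1$ have column $\geq 1+n > j$; and shifts with $m \leq -1$ have row $\leq 0$. Hence
\[
r(J)_{1,j} \;=\; j \;-\; \#\{\text{nonzero columns of $f$ in }[1,j]\},
\]
which is precisely the rank function attached by lemma \ref{lem:oppSchubert} to the partition $\nu$ built from $f$'s columns. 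The argument for $\mu$ is the same after transposing rows and columns and using terminal intervals $[i,n]$: the analogous periodicity check forces $r(J)_{i,n}$ to count only the nonzero rows of $f$ in $[i,n]$, and the row-version of lemma \ref{lem:oppSchubert} recovers $\mu$.

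For the equality criterion, $X_\mu \cap X^\nu$ is cut out by rank conditions only on initial and terminal intervals, so the containment is an equality iff every essential rank condition of $\Pi_J$ sits in row $1$ or column $n$ of $J$'s diagram. I would prove both directions of ``this holds iff the dots in each half of the parallelogram run NW/SE''. If both halves are NW/SE, each half's diagram is a disjoint union of rectangles attached to the outer boundary, with NE corners only on the top edge (row $1$) or on the rightmost column of that half (which becomes column $n$ of the fundamental domain after the right half is folded down by periodicity), so every essential corner lies in row $1$ or column $n$. Conversely, any inversion $(r_1,c_1), (r_2,c_2)$ within a single half (with $r_1 < r_2$, $c_1 > c_2$) produces an essential corner strictly interior to the $n \times n$ fundamental domain, yielding a crucial rank condition not of $[1,j]$ or $[i,n]$ form.

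The main obstacle is the converse direction of this last step: given a non-NW/SE pair, one must explicitly exhibit a genuine NE corner away from row $1$ and column $n$ and verify that its rank condition is crucial rather than implied. My strategy is to pass to an \emph{innermost} inversion --- one whose open rectangle $(r_1,r_2)\times(c_2,c_1)$ meets no other dot of $J$ --- so that the box $(r_1+1, c_1-1)$ is uncrossed by any nearby dot, is manifestly in the interior (since $r_1 \geq 1$ forces row $\geq 2$ and $c_1 \leq n$ forces column $\leq n-1$), and supplies a connected flat of $J$'s matroid whose rank condition is essential and not implied by the Schubert and opposite Schubert conditions.
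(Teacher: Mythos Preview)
Your identification of $X_\mu$ and $X^\nu$ via the rank functions on $[1,j]$ and $[i,n]$ is correct and is exactly what the paper does (with more detail on your part about why periodic copies don't contribute).

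For the equality criterion you take a genuinely different route from the paper. The paper argues entirely inside the positroid poset: given an innermost NE/SW pair in either half, swap it to a NW/SE pair; this is a covering relation $J' \lessdot J$ in affine Bruhat order, hence $\Pi_{J'} \supsetneq \Pi_J$, and the swap leaves the rank numbers on all $[1,j]$ and $[i,n]$ unchanged, so $\Pi_{J'} \subseteq X_\mu \cap X^\nu$ still. Thus the process of repeatedly swapping terminates simultaneously at ``no NE/SW pairs'' and at ``can't get bigger inside $X_\mu \cap X^\nu$'', proving both directions at once. You instead try to locate an interior essential box directly.

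There is a genuine gap in your converse direction. Your candidate box $(r_1+1,c_1-1)$ can be crossed out by dots \emph{outside} the innermost-inversion rectangle --- in particular by dots from the right half of the parallelogram, which your ``nearby dot'' condition ignores. Concretely: take $n=4$, $k=2$, with $f$ having dots at $(1,4)$ and $(3,2)$ (an inversion in the left half) and the right half NW/SE at $(2,5),(4,7)$. Your box is $(2,3)$, but the dot at $(2,5)$ crosses out all of row~$2$ to its west, including $(2,3)$. An interior essential box does exist here --- it is $(3,3)$ --- but it is not the one your recipe produces. So either you need a different, more careful construction of the box (and then still an argument that its rank condition is not implied by the Richardson conditions in combination), or you should switch to the Bruhat-swap argument, which sidesteps the diagram analysis entirely. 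Your forward direction is also only sketched: the claim that NW/SE halves force all essential corners into row~$1$ or column~$n$ requires an argument about how crossing-out from the two halves interacts, which you have not given.
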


\begin{proof}
  First we check straightforwardly that the smallest Schubert and
  opposite Schubert varieties containing $\Pi_J$ are $X_\mu$ and $X^\nu$, 
  by checking the rank conditions on the intervals $\{[i,n]\}$ and $\{[1,j]\}$.

  Since Schubert and opposite Schubert varieties are positroid varieties,
  so are Richardson varieties. So the containment
  $X_\mu \cap X^\nu \supseteq \Pi_J$ is an equality exactly if $\Pi_J$
  is the largest positroid variety with these given
  rank conditions on the intervals $\{[i,n]\}$ and $\{[1,j]\}$.

  If $\Pi_J$ has a NE/SW pair of dots in either the left or right half,
  with the NE dot minimally NE of the SW dot, we can switch them for
  a NW/SE pair by doing a covering relation in affine Bruhat order.
  This terminates when we can't get bigger inside $X_\mu \cap X^\nu$,
  and also when there are no such pairs, as was to be shown.
\end{proof}

Given $X \subseteq \Grkn$, let $\wt X \subseteq M_{k\times n}$ be the
closure of $\{M \in M_{k\times n} : \rowspan(M) \in X\}$,
and call it the \defn{Stiefel cone} over $X$.
(We invented this terminology to generalize the ``affine cone'' 
$k=1$ case, and the Stiefel manifold.) When $k=1$ this is the
usual affine cone over a projective variety. Because of the closure
operation, the Stiefel cone may be more singular than $X$ itself.
Of course our interest is in the case $X=\Pi_J$, where $\wt X = \wt\Pi_J$.

\begin{Proposition}
  Let $f$ be an $n\times n$ upper triangular
  partial permutation matrix of rank $n-k$.
  Construct $f'$ of size $(n+k)\times (n+k)$ of rank $n$,
  by putting $f$ in the upper left corner, $k$ zero rows on the bottom,
  and $k$ dots arranged NW/SE in the remaining $n\times k$ rectangle in the NE.
  Then $\wt\Pi_{J(f)}$ is isomorphic to an open set on $\Pi_{J(f')}$. Hence each
  $\wt\Pi_{J(f)}$ is normal and Cohen-Macaulay, with rational singularities, 
  and intersections of unions of these Stiefel cones are reduced.
\end{Proposition}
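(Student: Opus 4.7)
The plan is to exhibit $\wt\Pi_{J(f)}$ as an open chart of $\Pi_{J(f')}$ in $\Gr{k}{n+k}$ and to transfer the listed properties from positroid varieties. Let
\[
\iota : M_{k\times n} \hookrightarrow \Gr{k}{n+k}, \qquad M \mapsto \rowspan([M \mid I_k]),
\]
be the open immersion with image the standard affine chart $U$ of $k$-planes projecting isomorphically onto the last $k$ coordinates. I claim that $\iota$ restricts to an isomorphism $\wt\Pi_{J(f)} \overset{\sim}{\longrightarrow} U \cap \Pi_{J(f')}$, from which the remaining assertions follow by general nonsense.

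The claim reduces to comparing defining rank conditions. For $1 \leq i \leq j \leq n$, the rank of columns $[i,j]$ of $[M \mid I_k]$ equals the rank of columns $[i,j]$ of $M$; and counting dots of $f'$ weakly Southwest of $(i,j)$ gives $r(f')_{ij} = r(f)_{ij}$, since the $k$ new dots of $f'$ lie in columns $n+1,\ldots,n+k$, strictly East of $j$. For $j = n + l$ with $1 \leq l \leq k$, expanding against $I_k$ gives
\[
\rank(\text{cols }[i, n+l]\text{ of }[M\mid I_k]) = l + \rank\bigl(M_{[l+1,k],\,[i,n]}\bigr),
\]
and another dot count, writing $\beta := |E_f \cap [i,n]|$ for the number of empty rows of $f$ that are $\geq i$, yields $r(f')_{i, n+l} = \min(k,\, l+\beta)$. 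Since $r(f)_{i,n} = \beta$, the displayed subblock rank is automatically $\leq \beta$ whenever $M \in \wt\Pi_{J(f)}$, so these extra conditions impose nothing new. The remaining case $i > n$ is trivial on both sides. Together these identifications show that $\iota$ identifies $\wt\Pi_{J(f)}$ scheme-theoretically with $U \cap \Pi_{J(f')}$.

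Given this, the remaining assertions are immediate. The positroid variety $\Pi_{J(f')}$ is normal, Cohen-Macaulay, with rational singularities by item (2) of \S\ref{ssec:introIP}; all three are local properties and pass to the open subset $U \cap \Pi_{J(f')} \cong \wt\Pi_{J(f)}$. For intersections of unions of Stiefel cones, distribute
\[
\bigcap_a \bigcup_b \wt\Pi_{J(f_{ab})} \;=\; \bigcup_{\sigma}\,\bigcap_a \wt\Pi_{J(f_{a,\sigma(a)})},
\]
apply $\iota$, and invoke item (3) of \S\ref{ssec:introIP}: each inner intersection is a reduced union of positroid varieties, and restriction to the open set $U$ preserves reducedness.

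The main obstacle is the rank comparison in the second case, $j > n$, where the identity columns of $[M \mid I_k]$ convert the $\Pi_{J(f')}$-conditions into subblock-rank conditions on $M$ that are not themselves among the defining conditions of $\wt\Pi_{J(f)}$. The saving observation is that each such subblock rank is dominated by the full column-range rank at $(i,n)$, which $\wt\Pi_{J(f)}$ already bounds by $\beta$; so no new constraint is imposed, and the rank-condition matching is complete.
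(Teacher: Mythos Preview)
Your proof is correct and takes exactly the same approach as the paper: the paper's entire proof is the single sentence ``The correspondence is $M \mapsto \rowspan [M\ {\rm Id}_k]$, landing inside the big cell in which the last Pl\"ucker coordinate is nonzero,'' and you have supplied the rank-condition verification that makes this precise. One small caveat: the distributive law $\bigcap_a \bigcup_b = \bigcup_\sigma \bigcap_a$ holds set-theoretically but not scheme-theoretically in general, so your final paragraph does not literally establish reducedness of the scheme-theoretic intersection; the cleanest fix is to observe that under $\iota$ the question transfers to $U \cap \bigcap_a \bigcup_b \Pi_{J(f'_{ab})}$, and positroid varieties are compatibly Frobenius split in $\Gr{k}{n+k}$ (this is what underlies item (3) in \cite{KLS}), so arbitrary Boolean combinations of them are reduced.
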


\begin{proof}
  The correspondence is $M \mapsto \rowspan [M\ {\rm Id}_k]$, landing
  inside the big cell in which the last Pl\"ucker coordinate is nonzero.
\end{proof}

These good properties do {\em not} hold for the Stiefel cones $\wt\Pi_J$ 
of general positroid varieties. In particular, if the Stiefel cones
over the four positroid divisors in $\Gr{2}{4}$ are $D_1,D_2,D_3,D_4$,
then the scheme $D_1 \cap D_2 \cap (D_3 \cup D_4)$ contains the 
rank $\leq 1$ matrices as a component of multiplicity $2$
(so, nonreduced). From this point
of view, the Stiefel cones of positroid varieties behave as badly as one would 
expect them to, and the Stiefel cones of {\em interval} positroid varieties
are only better behaved because they are open sets on positroid varieties.

This next proposition computes the $T$-fixed points on an interval
positroid variety, in terms of matchings. (It will not be used later.)

\begin{Proposition}\label{prop:matchings}
  Let $f$ be an upper triangular $n\times n$ partial permutation matrix, 
  and $S \subseteq [n]$ a subset. 
  Let $dots(f) = \{(i,f(i)) : f(i)$ defined$\}$.
  Then $\Pi_f$ contains the coordinate space $\AA^{S^c}$ that uses the
  coordinates not in $S$ if and only if there is a matching 
  $m: dots(f) \to S$, where $a \leq m(a,b) \leq b$ 
  for each $(a,b) \in dots(f)$. 

  In words, each dot $(a,b)$ gets matched with a diagonal entry $(m,m)$
  to its Southwest, with $S$ the unmatched part of the diagonal.
\end{Proposition}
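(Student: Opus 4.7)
The plan is to reduce to Hall's marriage theorem, applied to the bipartite graph $G$ with vertex parts $dots(f)$ and $S$, where $(a,b) \in dots(f)$ is joined to $s \in S$ iff $a \leq s \leq b$. Since $f$ has rank $n-k$ and $|S^c| = k$, both parts have size $n-k$, so a function $m$ as in the proposition is exactly a perfect matching in $G$.

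First I would translate $\AA^{S^c} \subseteq \Pi_f$ into concrete inequalities. Representing $\AA^{S^c}$ by the $k \times n$ matrix with rows $\{e_i : i \in S^c\}$, its submatrix on columns $[i,j]$ has rank $|S^c \cap [i,j]|$. Using the rank matrix $r(f)_{ij} = |[i,j]| - |\{(a,b)\in dots(f) : i \leq a \leq b \leq j\}|$ (where $a \leq b$ is automatic from upper-triangularity of $f$), the defining rank conditions of $\Pi_f$ reduce to
$$ |S \cap [i,j]| \;\geq\; \big|\{(a,b)\in dots(f) : i \leq a \leq b \leq j\}\big| \qquad (\ast) $$
for all $i \leq j$. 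The ($\Leftarrow$) direction then becomes immediate: given such an $m$, the dots satisfying $i \leq a \leq b \leq j$ inject via $m$ into $[a,b] \cap S \subseteq [i,j] \cap S$, which yields $(\ast)$.

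For ($\Rightarrow$) I would verify Hall's condition on $G$. Given any $D \subseteq dots(f)$, write the union $U := \bigcup_{(a,b)\in D}[a,b]$ as a disjoint union of maximal integer intervals $I_1 \sqcup \cdots \sqcup I_r \subseteq [n]$. Since each $[a,b]$ is contiguous, every dot in $D$ is contained in a single $I_\alpha$, partitioning $D = D_1 \sqcup \cdots \sqcup D_r$. Applying $(\ast)$ to each $I_\alpha$ bounds $|D_\alpha| \leq |\{\text{dots of }f\text{ inside }I_\alpha\}| \leq |S \cap I_\alpha|$, and summing gives $|D| \leq |S \cap U| = |N_G(D)|$. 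Hall's theorem then produces the desired matching. The only nontrivial step is this interval decomposition argument, and it uses nothing beyond the contiguity of each $[a,b]$; I do not anticipate any real obstacle.
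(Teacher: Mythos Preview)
Your proof is correct and essentially complete. The translation to the inequalities $(\ast)$ matches the paper's, as does the easy $(\Leftarrow)$ direction. For $(\Rightarrow)$, however, you take a genuinely different route from the paper.

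The paper does \emph{not} invoke Hall's theorem; instead it builds the matching by an explicit induction on $\rank(f)$: diagonal dots $(h,h)$ are forced to match with $h$; otherwise the leftmost dot is pushed one column to the West, and when this move is obstructed one argues (via a small inclusion-exclusion on the $[i,j]$-inequalities) that the dot can be matched directly to the element of $S$ in its column. This yields an algorithm, at the cost of a longer case analysis. Your argument is shorter and more conceptual: the interval decomposition of $\bigcup_{(a,b)\in D}[a,b]$ shows that the interval inequalities $(\ast)$ already imply Hall's condition for every $D\subseteq dots(f)$, so Hall produces a matching saturating $dots(f)$. (You only need a matching saturating $dots(f)$, not a perfect one, so the assertion $|S|=n-k$ is actually unnecessary for the Hall step itself; it is needed only to make sense of $\AA^{S^c}$ as a point of $\Gr{k}{n}$, which the paper also notes.) Interestingly, the paper's remark immediately following the proof frames the proposition as a \emph{strengthening} of Hall's marriage theorem---namely, that failure of matchability is witnessed by a single interval $[a,b]$ rather than an arbitrary subset of brides---and your interval-decomposition lemma is precisely the observation that makes this strengthening transparent.
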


\begin{proof}
  Let $n-k$ be the rank of $f$, so by definition, $\Pi_f \subseteq \Grkn$,
  and any coordinate space in it must be $k$-dimensional.
  Thus already $S$ must have size $n-k$. 
  
  By the definition of $\Pi_f$, 
  it contains $\AA^{S^c}$ iff for each interval $[i,j]$, 
  $$ \big|[i,j]\setminus S\big| \ \ \leq\ \ \big|[i,j]\big| - 
  \#\left\{(a,b) \in dots(f) : i\leq a\leq b\leq j\right\}$$
  or equivalently
  $$ \big|[i,j]\cap S\big| \ \ \geq\ \ 
  \#\left\{(a,b) \in dots(f) : i\leq a\leq b\leq j\right\}
  \quad =: \#dots_{\swarrow (i,j)}(f). $$
  If a matching $m$ exists, it gives an injection of the set on the right
  to the set on the left. That proves one direction.

  We will refer to each of these as an ``$[i,j]$ inequality''.
  Assume that each $[i,j]$ inequality holds;
  we need to construct a matching.

  If $f(h)=h$ (i.e. we have a dot on the diagonal), 
  then the $[h,h]$ inequality shows $h\in S$,
  and the matching must include $(h,h) \mapsto h$.
  If we remove the dot from $f$ and $h$ from $S$, producing the new matching
  problem $f',S'$, then any interval $[i,j] \ni m$
  will have both sides of its inequality decrease by $1$,
  and any interval $[i,j] \not\ni m$ will stay exactly the same.
  In particular, the new problem $f',S'$ satisfies the required inequalities,
  so has a solution by induction on $\rank(f)$. With this we reduce to the 
  case that $f$ has no dots on the diagonal. 
  In particular, being strictly upper triangular, it must have some columns 
  without dots. If $f$ is the zero matrix, we are done, so assume otherwise.

  Let $j+1$ be the leftmost column with a dot, say at $(i,j+1)$.  
  We will {\em try} (and possibly fail -- this remains to be seen) 
  to move that dot West to $(i,j)$, producing $f'$. 
  This increases the right-hand side of each $[h,j]$ inequality. If
  they all still hold, then we can use a matching $m'$ for $f'$ to
  build a matching $m$ for $f$, by composing with the correspondence
  between the dots of $f$ and $f'$.

  If some $[h,j]$ inequality does not hold for $f'$, obstructing this move,
  it is because
  $$ \big|[h,j]\cap S\big| \ \ =\ \ 
  \#dots_{\swarrow (h,j)}(f). $$
%  \#\left\{(a,b) \in dots(f) : h\leq a\leq b\leq j\right\}.$$
  Compare with the $[h,j+1]$ inequality; the right side is $1$ dot larger,
  so the left side must have $1$ more element of $S$, i.e. $j+1 \in S$.
  So instead of moving the $(i,j+1)$ West, we will {\em try} to match it up 
  with $j+1 \in S$. (This time, we will be successful.)
  Let $f',S'$ be $f,S$ with $(i,j+1)$ and $j+1$ removed.

  This decreases the left side of various $[a,b]$ inequalities,
  and the right side of others. The only bad possibility is that we
  decrease the left side, but not the right, for some $[a,b]$ inequality
  that held with equality. The left side decreases if $[a,b] \ni j+1$. 
  The right side stays the same if $[a,b] \not\supseteq [i,j+1]$.
  Hence $i < a \leq j+1 \leq b$.

  Let $c$ be the number of dots in the rectangle 
  $[i,a-1] \times [j+1,b]$; we know $c>0$ because of dot $(i,j)$.
  \begin{eqnarray*}
    \left| S\cap [a,j] \right| 
    &=& \left| S\cap [i,j] \right| + \left| S\cap [a,b] \right| 
    - \left| S\cap [i,b] \right| \\
    &=&   \#dots_{\swarrow (i,j)}(f) +   \#dots_{\swarrow (a,b)}(f)
    - \left| S\cap [i,b] \right| \\
    &\leq& \#dots_{\swarrow (i,j)}(f) +   \#dots_{\swarrow (a,b)}(f)
    - \#dots_{\swarrow (i,b)}(f) \\
    &=& \#dots_{\swarrow (a,j)}(f) - c \\
    &<& \#dots_{\swarrow (a,j)}(f)    
  \end{eqnarray*}
  but this contradicts the $[a,j]$ inequality. So there is no obstruction
  to starting the matching with $(i,j+1) \mapsto j+1$.
\end{proof}

If $X \subseteq \Grkn$ is $T$-invariant and irreducible, its \defn{matroid}
is the collection $B(X) := \{S \subseteq [n] : \AA^S \in X \}$.
A \defn{positroid} is one arising
from $X = \overline{T\cdot V}$ where $V$ has all nonnegative real 
Pl\"ucker coordinates \cite{Postnikov}. 
%In matroid language, this is equivalent
%to the connected-and-coconnected flats being cyclic intervals \cite{KLS}.
The matroids of the $\{\Pi_f\}$ are positroids, whose connected flats
are intervals (not just cyclic intervals), 
hence the term ``interval positroid variety''.

Under Grassmannian duality $\Grkn \iso \Gr{n-k}{n}$, one can check that
the positroid variety $\Pi_f$ is corresponded with the positroid variety 
$\Pi_{(i \mapsto i+n) \circ f^{-1}}$. This does {\em not} preserve the
subclass of interval positroid varieties (other than the
Richardson varieties). The additional power available
from dualizing is exploited in \cite{KL}.

One interpretation of proposition \ref{prop:matchings} is that
interval positroids are ``dual transversal'' matroids. Consider the dots
in $f$ as a set of $n-k$ choosy brides $b$, each of whom will only marry a
groom within a certain height range $[i_b,j_b]$.
Then each $S$ is an acceptable set of grooms.  
Hall's Marriage Theorem (from which this terminology is derived) 
says that if some groomset $S \subset [n], |S|=\rank(f)$ is unmarriable,
it is because there is a set $B$ of brides with 
$\left| S \cap \union_{(i,j) \in B} [i,j] \right| < |B|$.
Proposition \ref{prop:matchings} goes further in two ways:
it says that if $S$ is unmarriable, then (1) there is an interval $[a,b]$
where $B$ is the brides with ranges in that interval, and 
the grooms in that interval aren't numerous enough, (2) even if one includes 
the grooms none of those brides wants.

\subsection{A Monk formula for positroid varieties}

For $S \subseteq [n]$, $r\in \naturals$, let
$$ X_{S\leq r} := \{\rowspan(M) :   M \in M_{k\times n}, 
\rank\ M = k, \rank\ (\text{columns $S$ of }M) \leq r \}. $$
If $S$ is a cyclic interval, call this a \defn{basic positroid variety}. 
Clearly every positroid variety is an intersection of basic ones, 
and one can show that no basic positroid variety is an intersection of
other positroid varieties.

\begin{Theorem}\label{thm:monk}
  Let $J$ be a bounded juggling pattern, and $\big(i,j = J(i)\big)$
  one of its dots.  Let $C = (j')$ be the columns of those dots
  minimally Northwest of $(i,J(i))$, ordered Northeast/Southwest, and
  $r = |[i,j-1]| - \#\{$dots in $J$ that are weakly Southwest of $(i,j-1)\}$.
  Then
  $$ \Pi_J \cap X_{[i,j-1] < r} 
  = \Union_{j' \in C} \Pi_{J'\circ (j\leftrightarrow j')}. $$
\end{Theorem}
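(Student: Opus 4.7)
The plan is to prove set-theoretic equality of the two sides, which suffices since both are reduced unions of positroid varieties (the left by property (3), as $X_{[i, j-1] < r}$ is itself a positroid variety — indeed a basic one — and the right tautologically). For $\supseteq$, fix $j' \in C$, let $i' = J^{-1}(j')$ (so $i' < i$ and $j' < j$), and let $J'$ denote the juggling pattern obtained from $J$ by swapping the columns of the dots at rows $i$ and $i'$. The ``minimally Northwest'' hypothesis on $(i', j')$ says no dot of $J$ lies strictly inside the rectangle spanned by $(i, j)$ and $(i', j')$, which is exactly the criterion from \S \ref{ssec:positroid} for $J \gtrdot J'$ to be a covering relation in the positroid poset, so $\Pi_{J'} \subset \Pi_J$. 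For $\Pi_{J'} \subseteq X_{[i, j-1] < r}$, I would compute directly: the swap replaces the dots $(i, j)$ and $(i', j')$ (neither weakly SW of $(i, j-1)$, as $j > j-1$ and $i' < i$ respectively) by $(i, j')$ and $(i', j)$, of which only $(i, j')$ is weakly SW of $(i, j-1)$, so the count of such dots goes up by one and the rank bound drops from $r$ to $r - 1$.

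For $\subseteq$, the intersection is a reduced union of positroid varieties by property (3), and I want each maximal component to be some $\Pi_{J'}$ with $j' \in C$. I will first argue each maximal component $\Pi_K$ has codimension $1$ in $\Pi_J$: on the open positroid stratum $\Pi_J^\circ$ the rank on columns $[i, j-1]$ generically equals $r$, so $\Pi_J \not\subseteq X_{[i, j-1] < r}$; picking an $r \times r$ minor of those columns that is generically nonzero on $\Pi_J^\circ$ gives a single equation cutting out the intersection near the generic point, and combined with the Cohen-Macaulayness of $\Pi_J$ (property (2)), this forces the intersection to be pure codimension one. Each maximal $\Pi_K$ then corresponds to a cover $J \lessdot K$ in the positroid poset. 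To identify which covers tighten the rank on $[i, j-1]$, I analyze combinatorially: a cover coming from swapping rows $p < q$ with $J(p) < J(q)$ (no dots of $J$ in the rectangle's interior) tightens $[i, j-1]$ iff the covering rectangle $[p+1, q] \times [J(p), J(q) - 1]$ contains $(i, j-1)$, i.e., $p < i \leq q$ and $J(p) \leq j - 1 < J(q)$. If $q > i$, then the dot $(i, j)$ of $J$ lies strictly inside this rectangle, violating the no-interior-dots hypothesis; so $q = i$ and $J(q) = j$, forcing $(p, J(p))$ to be a dot minimally NW of $(i, j)$, i.e., $J(p) \in C$.

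The main obstacle I anticipate is the codimension-$1$ purity step. $X_{[i, j-1] < r}$ is cut out by the simultaneous vanishing of many minors, so it is not obviously a Cartier divisor on $\Pi_J$, and extracting purity requires the Cohen-Macaulay argument to be run carefully — in particular, using that a single generically nonzero minor already cuts the intersection locally, since the rank condition $\leq r$ is already in force on $\Pi_J$. As an alternate route that sidesteps this geometric point, I could invoke the lifting (``Z'') property of affine $S_n$ Bruhat order to argue purely combinatorially that any minimal $K > J$ in the positroid poset with $r(K)_{[i, j-1]} < r$ must already be a cover of $J$; the combinatorial classification of tightening covers then closes the argument without appealing to pure-dimension considerations.
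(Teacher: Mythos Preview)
Your $\supseteq$ direction is correct and essentially identical to the paper's.

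The gap is in the $\subseteq$ direction, precisely at the codimension-$1$ purity step you flagged. The claim that a single generically nonvanishing $r\times r$ minor $m$ ``cuts out the intersection'' on $\Pi_J$ is not right: on $\Pi_J$ you only have $\operatorname{rank}\leq r$, so the rank-drop locus $X_{[i,j-1]<r}\cap\Pi_J$ is where \emph{all} $r\times r$ minors vanish, not just $m$. Your minor gives a hypersurface $V(m)\cap\Pi_J$ that \emph{contains} the intersection, and unmixedness (from Cohen--Macaulayness) tells you $V(m)\cap\Pi_J$ is pure codimension $1$ --- but a maximal component of the intersection could sit properly inside a component $\Pi_{K'}$ of $V(m)\cap\Pi_J$, as $\Pi_{K'}\cap X_{[i,j-1]<r}\subsetneq\Pi_{K'}$ whenever $K'$ is a cover of $J$ that does \emph{not} tighten the $(i,j-1)$ entry. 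So nothing prevents, a priori, a maximal component of codimension $\geq 2$. Your ``alternate route'' via the lifting property is not worked out, and getting it to say that any minimal $K>J$ with $r(K)_{i,j-1}<r$ is already a cover of $J$ is exactly the nontrivial content.

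The paper sidesteps purity entirely: it shows directly that \emph{any} $\Pi_{J'}\subseteq\Pi_J\cap X_{[i,j-1]<r}$ (of arbitrary codimension) lies in some $\Pi_{J\circ(j\leftrightarrow j')}$ with $j'\in C$. Take a saturated Bruhat chain $J=J_0\lessdot\cdots\lessdot J_k=J'$; each step drops the rank matrix by $1$ on a rectangle, and one may reduce to the case that only the last rectangle contains $(i,j-1)$. The paper then runs an induction on $k$ tracking the ``staircase'' of dots minimally NW of $(i,j)$: at each step either the staircase grows (and induction applies to $J_1,\ldots,J_k$), or a staircase-corner dot $d$ moves inside the staircase to $d'$, in which case the induction produces a rectangle to $d'$ and the first step's rectangle extends it to $d$. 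The author calls this ``perhaps the subtlest combinatorial argument in the paper,'' and it is doing the real work your proposal leaves undone.
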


\begin{proof}
  An intersection of positroid varieties is a reduced union of
  positroid varieties \cite[corollary 4.4]{KLS2}, so we just 
  need to determine which such occur in $\Pi_J \cap X_{[i,j-1] < r}$.
  We alert the reader that this is perhaps the subtlest 
  combinatorial argument in the paper.

  By the definition of $C$, 
  each $J \circ (j\leftrightarrow j') \gtrdot J$ 
  is a covering relation in affine Bruhat order, 
  so $\Pi_{J'\circ (j\leftrightarrow j')} \subset \Pi_J$.
  Also, the dot in column $j'$ of $J$ moves down to row $i$, 
  providing another dot weakly Southwest of $(i,j-1)$, 
  hence $\Pi_{J'\circ (j\leftrightarrow j')} \subseteq X_{[i,j-1] < r}$.
  Together these prove the $\supseteq$ containment.

  For the reverse containment, we need to show that for any
  $\Pi_{J'} \subseteq \Pi_J \cap X_{[i,j-1] < r}$, there exists a $j'\in C$
  such that $\Pi_{J'} \subseteq \Pi_{J \circ (j\leftrightarrow j')}$.
  In rank matrix terms, we need $r(J') \leq r(J)$, with 
  strict inequality on some rectangle $[J^{-1}(j')+1,i] \times [j',j-1]$,
  not just at $(i,j-1)$.

  Consider a saturated chain 
  $J = J_0 \lessdot J_1 \lessdot \ldots \lessdot J_k = J'$
  in strong affine Bruhat order, 
  so in particular we have entrywise inequalities 
  $r(J_0) \geq r(J_1) \geq \ldots \geq r(J_k)$,
  and more specifically each $r(J_m) - r(J_{m+1})$ is a matrix of $0$s and $1$s
  with the $1$s in a rectangle. Then there exists a smallest $m$
  such that $r(J_0)_{i,j-1} > r(J_m)_{i,j-1}$, 
  and that $\Pi_{J_m}$ is therefore contained in $X_{[i,j-1] < r}$. 
  With this we can reduce to the case $k=m$.

  To describe our goal another way, each covering relation moves the
  dots at the NW and SE corners of an otherwise empty rectangle to the
  NE and SW corners. We want to show that the union of these rectangles
  from the covering relations in the chain $(J_0,\ldots,J_k)$ contains
  one of the maximal rectangles in the \defn{staircase of $J$ (above $(i,j)$)},
  the set of boxes weakly Southeast of some box of $C$ and weakly Northwest 
  of $(i,j)$. We will prove this by induction on $k$.

  The $k=1$ case is easy -- the only rectangle must include $(i,j)$, 
  so must have it as the Southeast corner, and hence the Northwest corner
  column must be in $C$.

  Consider the corresponding staircases for $J_0,\ldots,J_k$ with
  Northwest corner sets $C = C_0,\ldots,C_{k-1}$. 
  If the covering relation $J_0 \lessdot J_1$ gives an increase 
  in the staircase, or leaves it the same, then we can use induction.
  Otherwise, one checks that one of the dots $d$ in $C$ must move South or East
  inside the staircase to $d'$, as pictured (these being Southern moves):

  \centerline{\epsfig{file=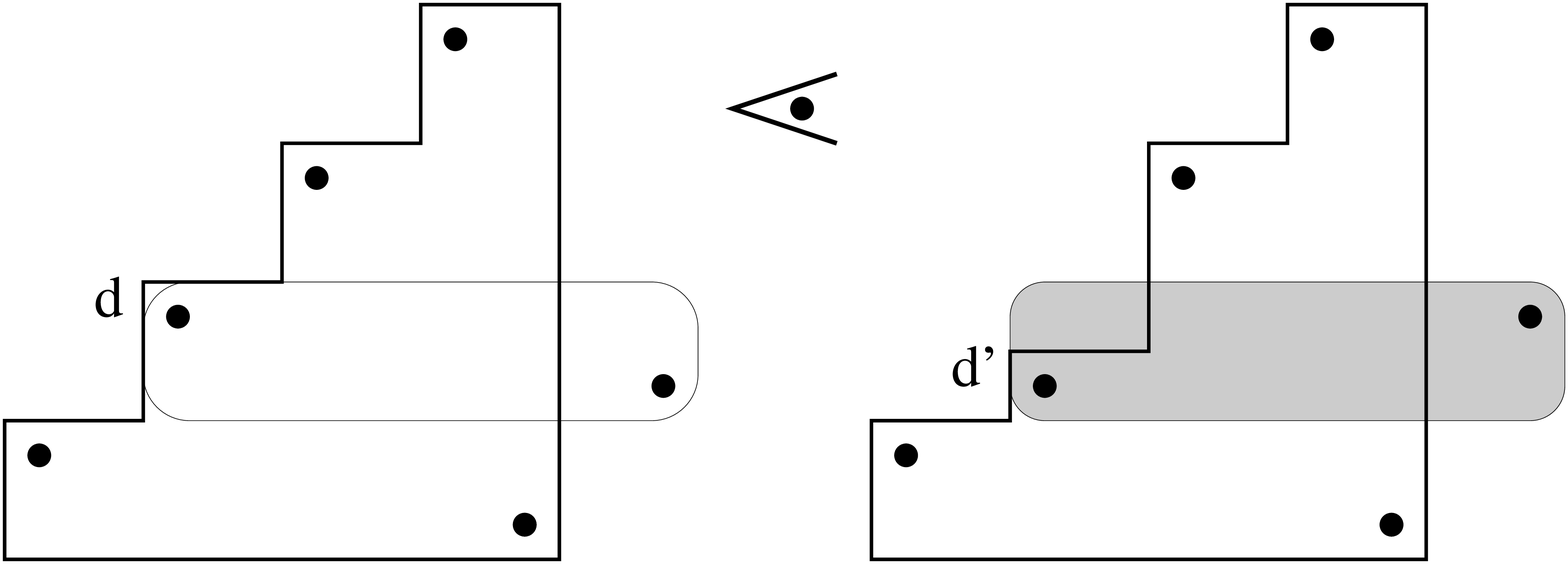, height=1.2in} \qquad\qquad
  \epsfig{file=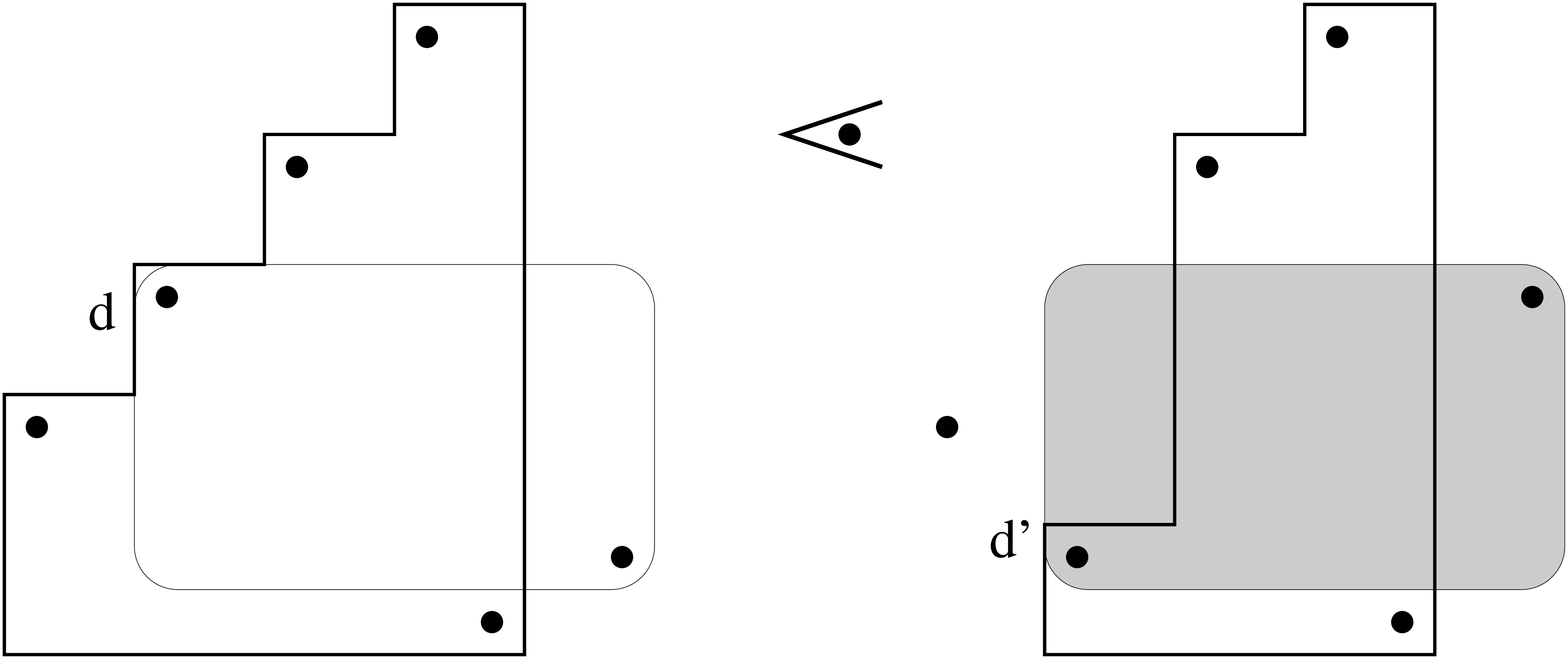, height=1.2in}}

  By induction, the remaining covering relations from $J_1$ to $J_k$
  give rectangles that cover a rectangle $R$ connecting $(i,j)$ to one
  of the NW corners of $J_1$'s staircase. If that corner is not $d'$,
  then it is one of the corners of $J_0$ and we're done. If that corner is $d'$,
  then $R$ union the rectangle acquired during the $d\searrow d'$ move
  covers the rectangle connecting $(i,j)$ to $d$.
\end{proof}

%We defer the connection of this to the usual Monk's rule 
%to a future publication.

\junk{
\subsection{Decomposing positroid schemes}\label{ssec:bands}

Let a \defn{band} be an $\infty\times\infty$ upper triangular integer matrix 
$A$ with the periodicity property $A_{ij} = A_{i+n,j+n}\ \forall i,j$.
Define the \defn{positroid scheme}
$$ \Pi_A := \big\{\rowspan(M)\ :\ M \in M_{k\times n},\quad \rank(M)= k,\quad
\forall j \in [i,i+n] \quad  
r(M)_{ij} \leq \sum_{(\ell,m) \atop i\leq \ell \leq m \leq j} A_{\ell m}\big\}. $$
So if $J$ is a bounded juggling pattern, and $A = {\bf 1} - J$, 
this is the positroid variety $\Pi_J$ of \S \ref{ssec:positroid}.

Positroid schemes are reduced (being exactly the intersections of
positroid varieties), but not usually irreducible. Before turning to
the question of decomposing them, we reduce to a special kind of band.
Call $A$ an \defn{alternating sign band} if 
\begin{itemize}
\item its entries are only $0,\pm 1$, and all zero outside $j \in [i,i+n]$,
\item each nonzero row starts with a $1$ on the diagonal, and (ignoring $0$s)
  alternates between $+1$ and $-1$, ending with $-1$, and
\item each nonzero column alternates likewise, when read bottom-up.
\end{itemize}
These are closely related to ``alternating sign matrices'', which are usually 
finite rather than periodic. \junk{
Specifically, if $n=2k$ and $A$ has the form
$$
\begin{matrix}
  \ddots & \ddots &    &   &\\
         &    I_k & -M &   &\\
         &        & I_k& -N&\\
         &        &    &\ddots&\ddots
\end{matrix}
\qquad \qquad \qquad \text{rows $[1,n]$ pictured}
$$
where each $I_k$ is an identity matrix on the diagonal, then $A$ is an
alternating sign band exactly if $M,N$ are alternating sign matrices 
in the usual sense.
}

Let $B$ be the $2\times 2$ pattern $+1\ -1\atop -1\ +1$, which we
will add to $A$ in various spots. If we add it ``at position $(i,j)$''
that means that we align $B$'s lower left entry to be at $(i,j)$.
The effect is to tighten the bound on $r(M)_{ij}$ by $1$, 
and not change any of the other rank conditions.

\begin{Proposition}\label{prop:ASB}
  Let $A$ be a band. If it is 
  \begin{enumerate}
  \item $\Pi_{A'} = \Pi_A$ for
    $ A'_{ij} =  \begin{cases} A_{ij} &j \in [i,i+n] \\ 0 &\text{otherwise.}
    \end{cases}$ 
  \end{enumerate}
  For (2) and (3) assume that $A$ is an alternating-sign band within
  the triangle SW of $(i,j)$, but possibly not at $(i,j)$. 
  \begin{enumerate}
  \item[(2)] If $A_{ij}>1$, we can add $B$ at position $(i,j)$ without changing
    the associated positroid scheme.
    $$
    \begin{bmatrix}  &&& \\ A_{ij}&&&\phantom{f} \end{bmatrix} 
    \quad\mapsto\quad
    \begin{bmatrix} +1 && -1\\ A_{ij}-1 && +1 \end{bmatrix}  $$
  \item[(3)] If $A_{ij}<-1$, we can add $B$ at positions $(i,j-1)$ and $(i+1,j)$
    without changing the associated positroid scheme.
    $$
    \begin{bmatrix}  && \\ & A_{ij} & \\ & & \end{bmatrix} 
    \quad\mapsto\quad
    \begin{bmatrix}  +1&-1& \\ -1& A_{ij}+2 &-1 \\ &-1 &+1 \end{bmatrix}
    $$
  \end{enumerate}
  With (1)-(3), we can reduce to the case that the entries of $A$ are 
  $1,0,-1$ and zero outside $i \leq j \leq i+n$.
  We assume these hereafter.
  \begin{enumerate}
  \item[(4)] If $A$ has a $+1$ at position $(i,j)$, to the East of another $+1$
    without a $-1$ in between (pictured below),
    we can add $B$ at position $(i,j)$ 
    without changing the associated positroid scheme.    
    $$
    \begin{bmatrix} &&& &&& \\ +1&0&\ldots&0&+1&&     \end{bmatrix}
    \quad\mapsto\quad
    \begin{bmatrix} &&&&+1&-1\\ +1&0&\ldots&0&\cancel{+1} &+1   \end{bmatrix} $$
    If $A$ has only $0$s to the right of $(i,j)$, 
    then we can simplify further to
    $$
    \begin{bmatrix} &&& && \\ +1&0&\ldots&0&+1&     \end{bmatrix}
    \quad\mapsto\quad
    \begin{bmatrix} &&&&+1\\ +1&0&\ldots&0&\cancel{+1}    \end{bmatrix} $$
  \item[(4${}^T$)] The same things hold, transposing $\nwsearrow$:
    $$
    \begin{bmatrix}       & \\ +1&\\ 0&\\ \vdots&\\ 0&\\ +1    \end{bmatrix}
    \quad\mapsto\quad
    \begin{bmatrix} +1&-1\\ \cancel{+1}&+1\\ 0&\\ \vdots&\\ 0&\\ +1\end{bmatrix}
    \qquad\qquad
    \begin{bmatrix}        +1&\\ 0&\\ \vdots&\\ 0&\\ +1    \end{bmatrix}
    \quad\mapsto\quad
    \begin{bmatrix}  \cancel{+1}&+1\\ 0&\\ \vdots&\\ 0&\\ +1\end{bmatrix}
    \text{if only $0$s above $(i,j)$}
    $$
  \item[(5)] If $A$ has a $-1$ at position $(i,j)$, to the East of another $-1$
    without a $+1$ in between, we can add $B$ at position $(i+1,j)$
    without changing the associated positroid scheme.
    $$
    \begin{bmatrix}  -1&0&\ldots&0&-1&& \\ &&& &&&     \end{bmatrix}
    \quad\mapsto\quad
    \begin{bmatrix}  -1&0&\ldots&0&\cancel{-1} &-1 \\&&&&-1&+1  \end{bmatrix} 
    $$
    If $A$ has only $0$s to the right of $(i,j)$, 
    then we can simplify further to
    $$
    \begin{bmatrix}  -1&0&\ldots&0&-1 \\&&& &      \end{bmatrix}
    \quad\mapsto\quad
    \begin{bmatrix}  -1&0&\ldots&0&\cancel{-1} \\ &&&&-1    \end{bmatrix} $$
  \item[(5${}^T$)] The same things hold, transposing $\nwsearrow$:
    $$   \begin{bmatrix} & \\ &-1\\ &0\\ &\vdots\\ &0\\ &-1  \end{bmatrix}
    \quad\mapsto\quad
    \begin{bmatrix} +1&-1\\ -1&\cancel{-1}\\ &0\\ &\vdots\\ &0\\&-1\end{bmatrix}
    \qquad
    \begin{bmatrix} &-1\\ &0\\ &\vdots\\ &0\\ &-1  \end{bmatrix}
    \quad\mapsto\quad
    \begin{bmatrix}  -1&\cancel{-1}\\ &0\\ &\vdots\\ &0\\&-1\end{bmatrix}
    \text{if only $0$s above $(i,j)$}
    $$
  \end{enumerate}    
  Using these reductions, if the positroid scheme $\Pi_A$ is nonempty,
  then there exists an alternating sign band $A'$ such that $\Pi_A = \Pi_{A'}$.
\end{Proposition}

\begin{proof}
  Say the $i$th row of $A$ either has some entry $e$ with $|e|>1$, 
  or fails to alternate (the column case is similar). 
  Let $(i,j)$ be the leftmost violation, $j\geq i$,
  and let $R = \sum_{(\ell,m) \atop i\leq \ell \leq m \leq j} A_{\ell m}$.

  Since $\Pi_A \neq \emptyset$, $A_{ii}\geq 0$. If $j=i$, then $A_{ii} > 1$.
  If we add $B$ to $A$ at $(i,i)$,
  then we get a new band $A'$ with all the same rank conditions except
  for the one saying column $i$ has rank at most $A_{ii}$; 
  now instead it says column $i$ has rank at most $A_{ii}-1$, 
  which doesn't change the scheme.

  If $j>i$, there are two cases; $A_{ij}$ is larger than it is supposed
  to be, or smaller. (E.g. if the row so far has been $1,0,0,-1,1,0$,
  then the next entry should be either $-1$ or $0$.) 
  The corresponding rank conditions say $r(M)_{ij} \leq R$, and
  \begin{eqnarray*}
    r(M)_{i,j+1} &\leq& R + 1 \qquad\quad  \text{if $A_{ij}$ is too big} \\
    r(M)_{i,j+1} &\leq& R \qquad\qquad     \text{if $A_{ij}$ is too small.}
  \end{eqnarray*}
  In the first case, the $(i,j)$ rank condition implies the $(i,j+1)$
  and we can safely add $B$ to $A$ at position $(i,j+1)$
  without changing the scheme. In the second, the reverse is true,
  and we can safely add $B$ to $A$ at position $(i,j)$
  without changing the scheme.

  Since $\Pi_A \neq \emptyset$, none of the ranks are $<0$. Each time
  we add $B$ somewhere, we shrink one rank by $1$. There are only 
  finitely many $(i,j)$ to consider (up to periodicity) so this 
  process terminates.

  If $A_{ii} = 0$, then the $i$th column of $M$ must be $\vec 0$.
  If $A_{ij} = 1$ ($j$ chosen smallest), then $r(M)_{i+1,j} \leq R$,
  and $r(M)_{i,j} \leq R+1$. But since the $i$th column is $\vec 0$,
  we know $r(M)_{i,j} = r(M)_{i+1,j}$, so we can decrease $A_{ij}$
  by adding $B$ at position $(i,j)$, without changing the scheme.

  The last $\Pi_A = \Pi_{A'}$ is obvious because the rank conditions don't
  depend on the entries outside $i \leq j \leq i+n$. 
\end{proof}

It seems likely that different alternating-sign bands give different,
nonempty, positroid schemes, but we didn't try to prove this.

\begin{Proposition}\label{prop:ASBdecompose}
  Let $A$ be an alternating sign band, with a $1$ at position $(i,j)$, $i<j$.
  (This implies $i+1<j$.)
  Let $A_1$ be $A$ plus $B$ added at position $(i,j)$, 
  let $A_2$ be $A$ plus $B$ added at position $(i+1,j-1)$, 
  and $A_{12}$ be $A$ plus $B$ added in both places. 
$$
\begin{matrix}
  A && A_1 && A_2 && A_{12} \\
  \begin{bmatrix}
     & & \\ 
     &+& \\
     & &
  \end{bmatrix} &&
  \begin{bmatrix}
     & & \\ 
    +& & \\
    -&+&
  \end{bmatrix} &&
  \begin{bmatrix}
     &+&-\\ 
     & &+\\
     & &
  \end{bmatrix} &&
  \begin{bmatrix}
     &+&-\\ 
    +&-&+\\
    -&+&
  \end{bmatrix} &&
\end{matrix}
$$

Then
  $$ \Pi_A = \Pi_{A_1} \cup \Pi_{A_2}, \qquad\text{glued along }\Pi_{A_{12}}. $$
\end{Proposition}

\begin{proof}
  The containment $\supseteq$ is clear. Also, $\Pi_A$ is reduced, so 
  we only have to show the containment $\subseteq$ set-theoretically.

%  The stronger result, $\wt\Pi_A = \wt\Pi_{A_1} \cup \wt\Pi_{A_2}$, is
%  also easier to prove. 
  Let $R = \sum_{(\ell,m) \atop i\leq \ell \leq m \leq j} A_{\ell m}$.
  The $1$ at $(i,j)$ says that 
  $$ r(M)_{i+1,j-1} \leq R-1, \qquad r(M)_{i+1,j}, r(M)_{i,j-1} \leq R-1, \qquad
  r(M)_{i,j} \leq R $$
  If the first $\leq$ is $<$, then $M \in \Pi_{A_1}$. 
  Otherwise the first $\leq$ is $=$, and so are the second and third, 
  meaning that the $i$ and $j$ columns are linear combinations of
  the $[i+1,j-1]$ columns. Consequently $r(M)_{i,j} \leq R-1$ also,
  i.e. $M \in \Pi_{A_2}$.
\end{proof}

\begin{Lemma}\label{lem:stair}
  Let $A$ be an alternating sign band containing a staircase-shaped
  region $R$ as in the left figure, where the only signs occurring
  within $R$ are $-1$s in the Northwest and Southeast corners, plus $B$
  in the Southeast corner (canceling the $-1$). The bottom resp. rightmost
  rectangle in $R$ should have height resp. width at least $2$.

  \centerline{\epsfig{file=stair.eps,height=2in}}

  Then $\Pi_A = \Union_{A'} \Pi_{A'}$, one $A'$ for each Northwest corner
  of $R$, where 

  ...

\end{Lemma}

}

\section{Combinatorial and geometric shifting}\label{sec:shifting}

\newcommand\coord{{coord}}

The classic combinatorial shift operations defined in \cite{EKR}
concern the sets $[n] := \{1,2,\ldots,n\}$ and 
${[n] \choose k} := \{S \subseteq [n] : |S| = k\}$.
Before getting into them, we establish a basic correspondence between
collections of subsets (the combinatorial side) and certain subschemes of 
the Grassmannian (the geometrical side).

\subsection{Between collections and subschemes}

\newcommand\calC{{\mathcal C}}

The connection to geometry begins % in \S \ref{ssec:geomshift}
with the correspondence
$$ \coord : {[n]\choose k}\ \widetilde\to\ \Grkn^T, \qquad 
S \mapsto
\coord(S) := 
 \{ \vec v \in \AA^n \text{ that use only coordinates from }S\}
$$
between $k$-subsets and the $T$-fixed points, the coordinate subspaces.

If $X \subseteq \Grkn$ is a closed $T$-invariant subscheme,
not just a point, we can nonetheless look at its fixed points $X^T$,
and write
$$ \coord^{-1}(X) := \coord^{-1}(X^T) = 
\left\{S \in {[n]\choose k}\ :\ \coord(S) \in X^T\right\}
\quad \subseteq {[n]\choose k}. $$
To forestall confusion when talking about sets of sets, we will call
any $S \subseteq [n]$ a \defn{subset} and any  
$\calC \subseteq {[n]\choose k}$ a \defn{collection}.
Extend $\coord$ beyond subsets to collections, as follows:
$$ \coord(\calC) := \bigcap_{S \notin \calC}\ \{V \in \Grkn\ :\ p_S(V) = 0\} $$
where $p_S$ is the Pl\"ucker coordinate. 
This is the ``bracket ring'' construction of \cite{White}, 
in which $\calC$ is somewhat needlessly assumed to be a matroid,
presumably because $\coord(\calC)$ is reducible otherwise.%
\footnote{ It is well-known that if $X$ is irreducible and $T$-invariant, 
  then $\calC := \coord^{-1}(X^T)$ is the bases of a {\defn matroid}, 
  meaning that for each $\pi\in S_n$, the collection $\pi\cdot \calC$
  has a unique Bruhat minimum.
  Proof: Let $\check\rho$ be a regular dominant coweight, 
  so its Bia\l ynicki-Birula decomposition of $\Grkn$ is the 
  Bruhat decomposition. If $X$ is irreducible, then for each 
  $\pi \in S_n$, $\pi\cdot X$ will have a unique open Bia\l ynicki-Birula 
  stratum, whose center is this unique Bruhat minimum.
  See e.g. \cite{CoxeterMatroids}.}

To study these operations,
we first need a basic result about Pl\"ucker coordinates:

\begin{Lemma}\label{lem:Plucker}
  Let $X \subseteq \Grkn$ be $T$-invariant and reduced.
  If $S \in {[n]\choose k}$, and $\coord(S) \notin X$, 
  then the Pl\"ucker coordinate $p_S$ vanishes on $X$.
\end{Lemma}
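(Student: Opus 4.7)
The plan is to prove the contrapositive: if $p_S$ does not vanish identically on $X$, then $\coord(S) \in X$. The geometric idea is that the nonvanishing locus $U_S := \{V \in \Grkn : p_S(V) \neq 0\}$ is an affine chart in which $\coord(S)$ is the unique $T$-fixed point, and moreover every point of $U_S$ flows to $\coord(S)$ under a suitably chosen one-parameter subgroup of $T$.

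First I would set up the standard Pl\"ucker chart, realizing $U_S$ as $\AA^{k(n-k)}$ with coordinates given by the $k(n-k)$ free matrix entries of a $k\times n$ matrix whose columns indexed by $S$ form the identity; these coordinates are $T$-weight vectors of weight $y_i - y_j$ for $i \in S$, $j \notin S$, and the origin of this affine space is exactly $\coord(S)$. Next I would exhibit a one-parameter subgroup $\mu\colon \Gm \to T$ under which all those weights are strictly positive, e.g.\ $\mu(t) = \diag(t^{a_1},\ldots,t^{a_n})$ with $a_i=1$ for $i\in S$ and $a_j=0$ for $j\notin S$, giving $a_i - a_j = 1 > 0$ on every coordinate. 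Consequently $\lim_{t\to 0} \mu(t)\cdot v = \coord(S)$ for every $v \in U_S$.

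To finish: because $X$ is reduced and $p_S|_X \not\equiv 0$, the intersection $X \cap U_S$ contains a closed point $v$; by $T$-invariance the entire orbit $\mu(\Gm)\cdot v$ lies in $X$, and closedness of $X$ then forces the limit $\coord(S)$ to lie in $X$, contradicting the hypothesis. The only conceptual choice is that of the contracting coweight $\mu$, and it is essentially forced by demanding that every $T$-weight on $U_S$ be positive; I do not anticipate any real obstacle, as reducedness is used only to produce an honest point in $X \cap U_S$, and the Pl\"ucker chart description is standard.
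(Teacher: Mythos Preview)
Your proof is correct and essentially identical to the paper's: both pick a one-parameter subgroup of $T$ that contracts the big cell $\{p_S \neq 0\}$ to $\coord(S)$ (the paper phrases this as a Bia\l ynicki-Birula decomposition with sink $\coord(S)$ and basin $U_S$, while you work explicitly in the affine chart). One minor slip: the chart coordinate indexed by $i\in S$, $j\notin S$ has $T$-weight $y_j - y_i$, not $y_i - y_j$, so with your $\mu$ you should take $t\to\infty$, or equivalently set $a_i=0$ for $i\in S$ and $a_j=1$ for $j\notin S$ as the paper does; this does not affect the argument.
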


\begin{proof}
  Consider the one-parameter subgroup 
  $$ d:\Gm \to T,\quad t \mapsto \diag(t^{d_1},\ldots,t^{d_n}) 
  \qquad \text{where} 
  \quad d_k = \begin{cases} 
    0 &\text{if $k\in S$} \\ 
    1 &\text{otherwise} \end{cases} $$
  The sink of $d$'s Bia\l ynicki-Birula decomposition of $\Grkn$ is 
  the point $\coord(S)$, and its basin of attraction is the big cell 
  $p_S \neq 0$. If $X$ meets this cell, then since $X$ is $d$-invariant
  (being $T$-invariant) and closed, $X \ni \coord(S)$, contradiction.
  Hence $X$ is set-theoretically contained in the divisor $\{p_S = 0\}$,
  and since it was assumed reduced $X$ is contained in that divisor
  scheme-theoretically as well.  
\end{proof}

\begin{Proposition}\label{prop:connection}
  For $C \subseteq {[n]\choose k}$, 
  $$ \coord^{-1}(\coord(C)^T) = C. $$
  For $X \subseteq \Grkn$, 
  closed, reduced and $T$-invariant but possibly reducible,
  $$ X  \subseteq  \coord\left( \coord^{-1}(X) \right). $$
\end{Proposition}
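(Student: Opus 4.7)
The plan is to unpack both statements directly from the defining formula $\coord(C) = \bigcap_{S \notin C}\{V \in \Grkn : p_S(V) = 0\}$, with lemma \ref{lem:Plucker} supplying the real content of the second part.

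For the equality $\coord^{-1}(\coord(C)^T) = C$, I would start from the observation that the $T$-fixed points of $\Grkn$ are exactly the coordinate subspaces $\coord(S)$, so $\coord(C)^T$ consists of those $\coord(S)$ on which every $p_{S'}$ with $S'\notin C$ vanishes. Because $p_{S'}(\coord(S))$ is zero unless $S' = S$ (and is a nonzero constant when $S'=S$, the Plücker monomials being a dual basis to the $T$-fixed points up to sign), this happens iff $S$ itself is not among the forbidden $S'\notin C$. Hence $\coord(S)\in\coord(C)^T$ iff $S\in C$, giving the equality on the nose.

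For the containment $X\subseteq\coord(\coord^{-1}(X))$, I would first note that since $\coord(S)$ is $T$-fixed it lies in $X$ iff it lies in $X^T$, so $\coord^{-1}(X) = \{S : \coord(S)\in X\}$ and $\coord(\coord^{-1}(X)) = \bigcap_{\coord(S)\notin X}\{p_S = 0\}$. For each $S$ with $\coord(S)\notin X$, lemma \ref{lem:Plucker} applies to the closed, reduced, $T$-invariant scheme $X$ and concludes that $p_S$ vanishes on $X$. Intersecting these divisor conditions over all such $S$ yields the desired containment.

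The plan has essentially no obstacle; the nontrivial geometric content—that a $T$-invariant reduced $X$ missing the fixed point $\coord(S)$ must satisfy $p_S|_X\equiv 0$—is already isolated in lemma \ref{lem:Plucker} by the Białynicki-Birula contraction toward $\coord(S)$. The only care to take is to read $\coord(C)$ scheme-theoretically as an intersection of Plücker divisors (possibly nonreduced or reducible) when checking the first equality, since we are describing which coordinate subspaces are $T$-fixed points of that scheme, not trying to identify the scheme with a matroid variety.
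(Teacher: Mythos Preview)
Your proof is correct and follows essentially the same approach as the paper's: the first equality is unwound tautologically from the fact that $p_{S'}(\coord(S)) \neq 0$ iff $S' = S$, and the second containment is exactly an application of lemma~\ref{lem:Plucker} to each $S$ with $\coord(S)\notin X$. The paper says no more than this, calling the first part ``tautological'' and the second ``essentially a restatement of lemma~\ref{lem:Plucker}.''
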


\begin{proof}
  The first is tautological:
  \begin{eqnarray*}
    \coord(C)^T 
    &=& \bigcap_{S\notin C}\ \{V \in \Grkn^T : p_S(V) = 0\}
    = \bigcap_{S\notin C}\ \{V \in \Grkn^T : V \neq \coord(S) \}    \\
    &=& \{V \in \Grkn^T : \coord^{-1}(V) \notin \{S : S\notin C\} \} \\
    &=& \{V \in \Grkn^T : \coord^{-1}(V) \in C \}
  \end{eqnarray*}
  The second is essentially a restatement of lemma \ref{lem:Plucker}.
\end{proof}

In particular, the assignment $\calC \mapsto \coord(\calC)$ 
corresponds $PowerSet({[n]\choose k})$ 
with a certain collection of $T$-invariant subschemes of $\Grkn$,
with inverse correspondence $X \mapsto X^T$. A subscheme is in
the collection exactly if it is defined by the vanishing
of Pl\"ucker coordinates.

It is a classical theorem of Hodge and Pedoe that Schubert varieties
are subschemes of this type. The same is true more generally of
positroid varieties \cite[corollary 5.12]{KLS}, and will also be true for the
{\em reducible} schemes that we will produce through geometric shifting.

\subsection{Combinatorial shifting}\label{ssec:combshift}

Let $m \in [n]$, $S \subseteq [n]$, $\calC \subseteq PowerSet([n])$
be an element, subset, and collection respectively. 
At each of these three levels, the shifting mantra is

\begin{quote}
  ``turn $i$ into $j$, unless something's in the way''.
\end{quote}

\noindent (At the single-element level, nothing can be in the way.)
\begin{eqnarray*}
  \sh_{i\to j} m &:=& 
  \begin{cases} m &\text{if $m\neq i$} \\ j    &\text{if $m=i$}\end{cases} \\
  \sh_{i\to j} S &:=& 
  \begin{cases} \sh_{i\to j} m &\text{if $\sh_{i\to j} m \notin S$} \\
    m &\text{if $\sh_{i\to j} m \in S$} \end{cases} \quad:\quad m \in S \bigg\}\\
  \sh_{i\to j} \calC &:=& 
  \begin{cases} \sh_{i\to j} S &\text{if $\sh_{i\to j} S \notin \calC$} \\
    S &\text{if $\sh_{i\to j} S \in \calC$} \end{cases} 
        \quad:\quad S \in \calC \bigg\} 
\end{eqnarray*}
In particular, if $S = \{m\}$ is a singleton then $\shij S = \{\shij m\}$,
and likewise if $\calC = \{S\}$ is a singleton then $\shij \calC = \{\shij S\}$,
but in general the shift of a set or collection is {\em not} just the
shift of its elements. We leave the reader to check the following:

\begin{Lemma}\label{lem:shiftlevels}
  $$|\sh_{i\to j} S| = |S|, \qquad
   \sh_{i\to j} S \supseteq \{\sh_{i\to j} m : m\in S\}, \qquad
   \sh_{i\to j} S \setminus \{\sh_{i\to j} m : m\in S\} = 
  \begin{cases}
    i & \text{if $i,j \in S$} \\
    \emptyset & \text{otherwise}
  \end{cases} $$
  $$|\sh_{i\to j} \calC| = |\calC|, \quad
  \sh_{i\to j} \calC \supseteq \{\sh_{i\to j} S : S\in \calC\}, \quad
  \sh_{i\to j} \calC \setminus \{\sh_{i\to j} S : S\in \calC\} =
  \{ S \in \calC : S \neq \sh_{i\to j} S \in \calC \}. $$
\end{Lemma}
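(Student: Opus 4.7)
The plan is to first simplify the definitions so that the lemma's content becomes transparent, and then handle the subset and collection levels separately. Unpacking the set-level rule, one sees $\sh_{i\to j} S = S$ unless $i \in S$ and $j \notin S$, in which case $\sh_{i\to j} S = (S \setminus \{i\}) \cup \{j\}$. The key structural observation I would extract first is that $\sh_{i\to j}$ is \emph{idempotent} on subsets, since after a single application either nothing happened or $i$ was eliminated from the set, leaving no further target for the rule. This idempotence will be the crucial ingredient at the collection level.

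With the explicit description in hand, the three subset assertions are direct case checks: size preservation is immediate; the containment $\sh_{i\to j} S \supseteq \{\sh_{i\to j} m : m\in S\}$ comes from comparing the element-wise image (which sends $i$ to $j$ whenever $i\in S$) with the set-level output (which only substitutes $j$ for $i$ when $j$ is not already present); and the only element that can appear in $\sh_{i\to j} S$ but not in the element-wise image is $i$ itself, occurring precisely when both $i,j\in S$.

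For the collection level I would introduce the map $\phi:\calC \to \sh_{i\to j}\calC$ induced by the defining rule, so that $\phi(S)=\sh_{i\to j} S$ if $\sh_{i\to j} S \notin \calC$ and $\phi(S)=S$ otherwise. The size claim reduces to injectivity of $\phi$: if $\phi(S_1)=\phi(S_2)=T$, the three cases (both fixed, both shifted, or mixed) each force $S_1=S_2$, using that a ``shifted'' image lies outside $\calC$ by construction while a ``fixed'' image lies inside $\calC$. The containment $\sh_{i\to j} \calC \supseteq \{\sh_{i\to j} S : S\in \calC\}$ is where idempotence earns its keep: if $\sh_{i\to j} S \in \calC$ then, re-examining $\phi$ at the element $T=\sh_{i\to j} S$, idempotence gives $\sh_{i\to j} T = T \in \calC$, so $\phi(T)=T=\sh_{i\to j} S$ and hence $\sh_{i\to j} S \in \sh_{i\to j} \calC$ as required.

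Finally, for the description of the difference $\sh_{i\to j}\calC \setminus \{\sh_{i\to j} S : S\in \calC\}$, I would again exploit idempotence: any element in the image of the set-level $\sh_{i\to j}$ is a fixed point of $\sh_{i\to j}$, so a ``kept'' element $S$ of $\sh_{i\to j}\calC$ (i.e.\ one with $\phi(S)=S$) lies in $\{\sh_{i\to j} S' : S'\in \calC\}$ only when $S=\sh_{i\to j} S$. Combined with the criterion $\phi(S)=S \iff \sh_{i\to j} S \in \calC$, the elements contributing to the difference are exactly those $S\in \calC$ with $S \neq \sh_{i\to j} S \in \calC$, as claimed. The main obstacle is bookkeeping — keeping the three levels of the shifting operator straight through the case analysis — rather than any conceptual depth; once idempotence is noted, each clause drops out of a short dichotomy.
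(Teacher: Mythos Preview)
Your proof is correct. The paper itself gives no proof of this lemma at all: it is introduced with ``We leave the reader to check the following'', so there is nothing to compare against. Your organizing observation that $\sh_{i\to j}$ is idempotent on subsets is exactly the right tool for the collection-level claims; in particular, your injectivity argument for $\phi$ correctly handles the subtlety that $\sh_{i\to j}$ is not injective on \emph{all} subsets (e.g.\ $\{i\}$ and $\{j\}$ have the same image), by using that in the ``both shifted'' case the common image $T$ lies outside $\calC$ while $S_1,S_2\in\calC$, forcing both shifts to be nontrivial and hence uniquely reversible.
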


The Erd\H os-Ko-Rado theorem does not really study shifting itself as a process,
so much as collections $C$ that are invariant under all forward shifts,
and there is an industry of combinatorial results concerning
various objects (collections, matroids, simplicial complexes) 
that are ``shifted'' (see e.g. \cite{Frankl,Kalai}).
There does not seem to be as much study of the incremental shifting
we make use of here.

\subsection{Geometric shifting}\label{ssec:geomshift}

Hereafter $X$ is a closed subscheme of $\Grkn$, and almost always $T$-invariant.
\junk{
Under the loose correspondence 
$X \mapsto X^T \subseteq \Grkn^T \iso {[n]\choose k}$,
it will be the geometric analogue of a collection of $k$-element subsets.

Denote the latter bijection 
$\coord : {[n]\choose k}\ \widetilde\to\ \Grkn^T$, taking
a subset to the corresponding coordinate subspace.
}
Before defining the shift, first define
$$ \wt\swij X 
:= \overline{\{ (t,\exp(te_{ij})\cdot x) : t\in \AA^1, x\in X\}}
\quad \subseteq \PP^1 \times \Grkn $$
where the closure adds the fiber at $t=\infty$,
and define the \defn{(geometric) shift} $\sh_{i\to j} X$ to be this 
scheme-theoretic fiber over $\infty$ of the (automatically flat)
projection to $\PP^1$.
The shift need not be reduced; if $X$ is the two points $\Gr{1}{2}^T$,
then one falls into the other during the shift, and
$\sh_{1\to 2} X$ is a double point.
The \defn{(geometric) sweep} $\swij X$ is defined as the image of
the projection of $\wt\swij X$ to $\Grkn$. 
The same example $\Gr{1}{2}^T$ shows that this 
projection need not be birational to its image.

\junk{
  The projection $\wt\Psi_{i\to j} X \to \Grkn$, with image $\Psi_{i\to j} X$,
  need not be generically $1$:$1$; let the \defn{degree of the sweep}
  be the degree of this map, or zero if $X$ is shift-invariant.
}

Having defined the geometric analogue of the shift of a collection,
we can (in analogy to the paragraph before lemma \ref{lem:shiftlevels})
deduce the analogues of the shifts of elements and subsets:

\begin{Lemma}\label{lem:lowergeomshifts}
  Let $X = \{V\} \subseteq \Grkn$ (the analogue of $\calC = \{S\}$). Then 
  $$ \shij \{V\} = 
  \begin{cases}
    \{V\} &\text{if } V \leq \coord({[n]\setminus i}) 
    \text{ or } V \geq \coord(\{j\}) \\
%    \{V\} &\text{if } 
    \left\{\left(V \cap \coord({[n]\setminus i})\right) 
      \oplus \coord({\{j\}})\right)
    &\text{otherwise.}
  \end{cases}
  $$
  If in addition $V$ is one-dimensional (the analogue of $S = \{m\}$), then
  $$ \shij \{V\} = 
  \begin{cases}
    V &\text{if } V \leq \coord({[n]\setminus i}) \\
    \coord({\{j\}}) &\text{otherwise.}
  \end{cases}
  $$
\end{Lemma}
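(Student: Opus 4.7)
The plan is to expand $\exp(t e_{ij}) = I + te_{ij}$ (since $e_{ij}^2 = 0$ for $i \neq j$), under which action (on row vectors, matching the paper's row-span convention) a vector $v = \sum_k v_k e_k$ is sent to $v + t v_i e_j$. From this explicit formula the two invariance conditions read off directly: if $V \leq \coord([n]\setminus i)$ then $v_i = 0$ for every $v \in V$ and the action fixes $V$ pointwise, while if $V \geq \coord(\{j\})$ then $e_j \in V$ and $v + tv_i e_j$ lies in $V$ for all $t$. In either invariance case the orbit is a constant family and its limit $\shij\{V\}$ equals $\{V\}$.

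For the remaining case I would select a basis $\{v_1, v_2, \ldots, v_k\}$ of $V$ with $(v_1)_i = 1$ and each of $v_2, \ldots, v_k$ having $i$-th coordinate zero, which is possible by Gaussian elimination precisely because $V \not\leq \coord([n]\setminus i)$. Then $\{v_2, \ldots, v_k\}$ spans $V \cap \coord([n]\setminus i)$; the $v_s$ for $s \geq 2$ are fixed by the action, while $v_1$ moves to $v_1 + te_j$. Rescaling the first generator by $1/t$, the $k$-plane $\exp(te_{ij})V$ is the row span of $\{t^{-1}v_1 + e_j, v_2, \ldots, v_k\}$, which in the limit $t \to \infty$ becomes $\Span\{e_j, v_2, \ldots, v_k\} = \coord(\{j\}) \oplus (V \cap \coord([n]\setminus i))$. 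The sum is direct, hence genuinely $k$-dimensional, because we have excluded the case $e_j \in V$. The one-dimensional clause then follows by specialization: if $V \leq \coord([n]\setminus i)$ then $V$ is invariant, while otherwise $V \cap \coord([n]\setminus i) = 0$ forces the general formula to collapse to $\coord(\{j\})$; when $V = \coord(\{j\})$ the two branches consistently return $\coord(\{j\})$.

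The only real obstacle is interpreting ``the limit as $t \to \infty$'' as the scheme-theoretic fiber at $\infty$ rather than merely a set-theoretic limit. This is handled by properness of $\PP^1 \times \Grkn \to \PP^1$: the orbit map $\AA^1 \to \Grkn$, $t \mapsto \exp(te_{ij})V$, extends uniquely to a morphism from $\PP^1$, and the Pl\"ucker-coordinate expression for $\exp(te_{ij})V$ (polynomial in $t$) identifies the image of $\infty$ with the claimed $k$-plane after normalizing by the highest power of $t$ that appears.
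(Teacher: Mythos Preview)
Your proof is correct and follows essentially the same approach as the paper: choose a basis of $V$ adapted to $\coord([n]\setminus i)$, act by $\exp(te_{ij})$, rescale the distinguished vector by $t^{-1}$, and take the limit. If anything you are slightly more careful than the paper in explicitly separating out the invariance case $V \geq \coord(\{j\})$ (which ensures the limiting span is genuinely $k$-dimensional in the ``otherwise'' branch) and in justifying the limit via properness.
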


\begin{proof}
  We prove the first, from which the second is an evident special case.
  Pick a basis for $V \cap \coord({[n]\setminus i}))$, 
  and if $V \not\leq \coord({[n]\setminus i})$, extend to a basis of $V$.
  If we make these basis vectors the row vectors of a $k\times n$ matrix,
  then the $i$th column is $0$ except possibly in the last row.

  The action of $\exp(te_{ij})$ adds $t$ times column $i$ to column $j$. 
  If the $i$th column is zero, nothing happens. Otherwise we can add
  $t$ times column $i$ to column $j$, then (without changing the row span)
  scale the last row by $t^{-1}$ (for $t\neq 0$). As $t\to\infty$ the
  last row converges to the vector with $1$ in column $j$, $0$ elsewhere.
\end{proof}

%  follows directly from lemma \ref{lem:lowergeomshifts}.

The following proposition, essentially the reason \cite{Vakil} brought
shifting into Schubert calculus, will be the means by which we can inductively 
compute the class of an interval positroid variety.

\begin{Proposition}\label{prop:KTshift}
  Let $X \subseteq \Grkn$ be $T$-invariant and irreducible. Then
  $$ [X] = [\shij X] \qquad\qquad\qquad\qquad\qquad 
  \text{in $H^*(\Grkn)$ or $K(\Grkn)$.} $$
  If the map $\wt\swij X \to \swij X$ is degree $1$
  (as will be checkable using theorem \ref{thm:Tconvex} to come), then 
  $$ [X] = [\shij X] + (y_i-y_j) [\swij X] \qquad\qquad\qquad
  \text{in $H^*_T(\Grkn)$.} $$
  If in addition $\swij X$ has rational singularities, then
  $$ [X] = \exp(y_j-y_i) [\shij X] + (1 - \exp(y_j-y_i)) [\swij X] \qquad
  \text{in $K^*_T(\Grkn)$.} $$
\end{Proposition}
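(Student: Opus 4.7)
The plan is to treat $\wt\swij X \to \PP^1$ as a $T$-equivariant flat family and compare the classes of the two special fibers, $X$ at $t = 0$ and $\shij X$ at $t = \infty$, after push-forward to $\Grkn$.

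For the first (non-equivariant) identity, the projection $p_1\colon \wt\swij X \to \PP^1$ is flat because $\wt\swij X$ is integral and dominates the smooth curve $\PP^1$. In $K$-theory (for $[\O]$) and in $H^*$ (for the fundamental class), the class of the fiber is therefore constant on $\PP^1$. The two special fibers $X$ and $\shij X$ both map via $q := p_2$ isomorphically onto their images in $\Grkn$, so push-forward along $q$ gives $[X] = [\shij X]$.

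For the equivariant-cohomology identity, I first locate the $T$-weight of the parameter: the $T$-action by conjugation on $e_{ij}$ makes the coordinate $t$ on $\AA^1 \subset \PP^1$ a $T$-weight vector, whose tangent weight at $0$ is $y_i - y_j$ (and at $\infty$ is $y_j - y_i$). Localization at the two fixed points therefore verifies the identity
\[
[\{0\}] - [\{\infty\}] \;=\; y_i - y_j \qquad \text{in } H^*_T(\PP^1).
\]
Multiplying by $[\wt\swij X]$ and using flatness of $p_1$ to identify $[\wt\swij X] \cdot [\{0\} \times \Grkn]$ with $[X \times \{0\}]$ (and similarly at $\infty$), one obtains
\[
[X \times \{0\}] \;-\; [\shij X \times \{\infty\}] \;=\; (y_i - y_j)\, [\wt\swij X]
\]
in $H^*_T(\PP^1 \times \Grkn)$. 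Push-forward along $q$ sends the left side to $[X] - [\shij X]$, and by the degree-$1$ hypothesis sends $[\wt\swij X]$ to $[\swij X]$; rearranging gives the claimed formula.

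For the $K_T$ identity, the same game is played in $K_T(\PP^1)$. Localization at the two fixed points verifies
\[
[\O_{\{0\}}] \;=\; (1 - \exp(y_j - y_i))\,[\O_{\PP^1}] \;+\; \exp(y_j - y_i)\,[\O_{\{\infty\}}].
\]
Multiplying by $[\O_{\wt\swij X}]$, flatness again makes the products identify with the structure sheaves of the special fibers, so that $[\O_{\wt\swij X}] \cdot [\O_{\{0\} \times \Grkn}] = [\O_{X \times \{0\}}]$ and similarly at $\infty$. Push-forward by $q_*$ sends the two special-fiber terms to $[X]$ and $[\shij X]$, and the rational-singularities hypothesis on $\swij X$ (combined with the birationality of $\pi := q|_{\wt\swij X}$) ensures $R\pi_*\O_{\wt\swij X} = \O_{\swij X}$, so $q_*[\O_{\wt\swij X}] = [\O_{\swij X}]$, yielding the advertised formula.

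The main obstacle is verifying that the scheme-theoretic products and push-forwards really do what is needed. Flatness of $p_1$ is essential to kill all Tor contributions in intersections with $\{0\} \times \Grkn$ and $\{\infty\} \times \Grkn$; the degree-$1$ hypothesis is exactly what produces the identification $q_*[\wt\swij X] = [\swij X]$ in $H^*$; and the rational-singularities hypothesis on $\swij X$ is what upgrades this to $q_*[\O_{\wt\swij X}] = [\O_{\swij X}]$ in $K$, by forcing $R^{>0}\pi_* \O = 0$.
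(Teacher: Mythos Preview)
Your proof is correct and follows essentially the same strategy as the paper's: establish the relevant identity among $[0]$, $[\infty]$, and $[\PP^1]$ in the equivariant cohomology or $K$-theory of $\PP^1$, transport it to $\wt\swij X$ (you phrase this as multiplying in $\PP^1\times\Grkn$, the paper as pulling back along $p_1$; these agree by the projection formula), and then push forward to $\Grkn$, invoking the degree-$1$ and rational-singularities hypotheses exactly where you do. Your explicit justification of the fiber identifications via flatness and of $Rq_*\O_{\wt\swij X}=\O_{\swij X}$ is slightly more detailed than the paper's, but the argument is the same.
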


\begin{proof}
  Consider the projection $\swij X \onto \PP^1$
  to the first factor. If we act on $\AA^1 \subset \PP^1$ with weight 
  $y_i-y_j$, then this map is $T$-equivariant. 

  Let $[0], [\infty]$ denote the classes of these points in $\PP^1$
  in the various cohomology theories. 
  Then nonequivariantly we have $[0] = [\infty]$, 
  in $H^*_T(\PP^1)$ we have $[0] = [\infty] + (y_i-y_j) [\PP^1]$ 
  and in $K^*_T(\PP^1)$ 
  we have\footnote{Perhaps the most mnemonic way to think of this is in
    terms of the Atiyah-Bott localization formula in $K$-theory, which gives
    $$ [\PP^1] = \frac{[0]}{1 - \exp(-wt(T_{ 0} \PP^1))}
    + \frac{[\infty]}{1 - \exp(-wt(T_{ \infty} \PP^1))}
    \qquad \in K_T(\PP^1) \tensor frac(K_T(pt)). $$
  }
  $[0] = \exp(y_i-y_j) [\infty] + (1-\exp(y_i-y_j)) [\PP^1]$.

  Now pull whichever equation back to $\wt\swij X$, where
  $[0],[\infty],[\PP^1]$ pull back (in any cohomology theory) 
  to $[X \times 0]$, $[\shij X \times \infty]$, and $[\wt\swij X]$.

  Then push this equation forward %along $\pi : \wt\swij \to \Grkn$, 
  to $\Grkn$, where
  $[X \times 0],[\shij X \times \infty]$ push forward to $[X],[\shij X]$.
%  (in any cohomology theory).
  If the degree of $\wt\shij X \to \shij X$ is $k$, then 
  the fundamental class $[\wt\shij X]$ pushes forward to $k [\shij X]$
  in $H^*_T(\Grkn)$, and in general to something very complicated 
  in $K^*_T(\Grkn)$.

  However, if the degree is $1$ 
  (so that the induced map $\wt\swij X \to \swij X$ takes 
  the structure sheaf to the structure sheaf)
  and $\shij X$ has rational singularities
  (so that there are no higher direct images in sheaf cohomology),
  then $[\wt\swij X]$ pushes forward to $[\swij X]$, and we are done.
\end{proof}

To compute the shift and sweep we will obtain upper bounds from algebra, 
and lower bounds from geometry, which will sometimes coincide.

\junk{

The following is an additive-group analogue of the geometric vertex
decomposition lemma from \cite[???]{KMY}.
%, which calculated the limit set for a certain multiplicative-group action 

{\bf It needs a lot of fixing -- I don't think that's a $\PP^1$-bundle,
but some other projective space bundle,
unless one blows up $\Grkn$ along $\Grkn_{\{j\}}$. 
Which may make it unfixable, basically.}

\begin{Proposition}\label{prop:shiftlowerbound}
  Given $S \subseteq R \subseteq [n]$, define
  $$ \Grkn_S^R = \left\{V \in \Grkn :  \coord(S) \leq V \leq \coord(R) \right\}
  $$
  omitting the $S$ or $R$ if they are $\emptyset,[n]$ respectively.
  There is a $\PP^1$-bundle ???
  $$ \pi : (\Grkn^{[n]\setminus i})^c \to \Grkn^{[n]\setminus i}_{\{j\}}, \qquad 
  V \mapsto \shij V. $$
  %= (V \cap \coord({[n]\setminus i}))\oplus \coord({\{j\}}).$$
  For $Y \subseteq \Grkn$, define
  $$ Y^{[n]\setminus i} = Y \cap \Grkn^{[n]\setminus i}
  \qquad\text{and}\qquad
  Y_{i,j} := \pi^{-1} \left( 
    \{ V \in \Grkn_{\ni j} : \pi^{-1}(V) \subseteq Y \} \right)
  $$
  i.e. the latter is the union of the complete fibers. 
  Then as sets,
  $$ \shij Y\quad \supseteq \quad
  Y_{[n]\setminus i} \ \cup\ \pi(Y \setminus Y^{[n]\setminus i}) \ \cup\ Y_{i,j}. $$
\end{Proposition}

I suspect that, as in \cite[???]{KMY}, this upper bound is an 
equality of sets.

\begin{proof}
  First, we address the bundle. For $V \notin \Grkn^{[n]\setminus i}$,
  if $V \not\geq \coord(\{j\})$ then
  $ \shij V = (V \cap \coord({[n]\setminus i}))\oplus \coord({\{j\}})$.
  The preimage of

  This is three containments. For the first, 
  $$\shij Y \supseteq \shij Y^{[n]\setminus i} 
  \supseteq \{ \shij V : V \in Y^{[n]\setminus i} \} 
  = \{ V : V \in Y^{[n]\setminus i} \} = Y^{[n]\setminus i} $$
  since $\shij V = V$ for $V \in \Grkn^{[n]\setminus i}$.
\end{proof}

}

\begin{Proposition}\label{prop:shiftcupcap}
  Let $X_1, \ldots, X_m \subseteq \Grkn$ be $T$-invariant. 
  Then $\sh_{i\to j} \Union_k X_k \supseteq \Union_k \sh_{i\to j} X_k$,
  with equality as sets. Also, 
  $\sh_{i\to j} \bigcap_k X_k \subseteq \bigcap_k \sh_{i\to j} X_k$, but may
  be unequal as sets.
\end{Proposition}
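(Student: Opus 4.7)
The plan is to prove both the union containment and the intersection containment as scheme-theoretic inclusions by establishing a single functoriality lemma for the shift construction, then separately treat the set-theoretic equality in the union case and exhibit a counterexample in the intersection case.

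First I would record the functoriality statement: if $Y \subseteq Y'$ are closed $e_{ij}$-invariant subschemes of $\Grkn$, then $\shij Y \subseteq \shij Y'$ as closed subschemes of $\Grkn$. This is immediate from the construction, because the family $\wt\swij Y \subseteq \PP^1 \times \Grkn$ is the closure of the image of the morphism $\AA^1 \times Y \to \PP^1 \times \Grkn, (t,y) \mapsto (t,\exp(te_{ij})\cdot y)$: the inclusion $Y \hookrightarrow Y'$ induces a closed immersion of images, closures preserve closed immersions, and the scheme-theoretic fiber over $\infty \in \PP^1$ (a pullback) preserves them once more. (For the statement to make sense when $X$ is not $T$-invariant, one applies the same argument inside $\Grkn$ to the $e_{ij}$-invariant $\bigcup_k X_k$ and $\bigcap_k X_k$, which are invariant whenever the $X_k$ are.)

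Next I would apply this functoriality twice. Since $X_k \subseteq \bigcup_k X_k$, I get $\shij X_k \subseteq \shij \bigcup_k X_k$ for each $k$, whence $\bigcup_k \shij X_k \subseteq \shij \bigcup_k X_k$. Similarly $\bigcap_k X_k \subseteq X_k$ gives $\shij \bigcap_k X_k \subseteq \shij X_k$ for each $k$, hence $\shij \bigcap_k X_k \subseteq \bigcap_k \shij X_k$.

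For the set-theoretic equality in the union case, I would use that $\wt\swij \bigcup_k X_k$ coincides set-theoretically with $\bigcup_k \wt\swij X_k$: the parametrizing map before closure is already a set-theoretic union over $k$, and for finite unions $\overline{\bigcup A_k} = \bigcup \overline{A_k}$ as sets. Restricting to the fiber over $\infty$ also commutes with finite set-theoretic unions, yielding the desired equality of underlying sets. The scheme structures may differ (the two-point example $\Gr{1}{2}^T$ from the excerpt becomes a double point under $\sh_{1\to 2}$, while the union of the two individual shifts is reduced), which is precisely why the proposition only claims set-theoretic equality.

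Finally, for the intersection case I would give a minimal counterexample in $\Gr{1}{2} = \PP^1$: take $X_1 = \{\coord(\{1\})\}$ and $X_2 = \{\coord(\{2\})\}$, so $X_1 \cap X_2 = \emptyset$ and thus $\shij(X_1 \cap X_2) = \emptyset$, whereas Lemma~\ref{lem:lowergeomshifts} gives $\sh_{1\to 2} X_1 = \sh_{1\to 2} X_2 = \{\coord(\{2\})\}$, making the intersection of the shifts a single point. The main obstacle in the write-up is really just bookkeeping the scheme-theoretic versus set-theoretic distinction in the union statement; the containments themselves are formal consequences of the shift being a closure-then-fiber construction that is manifestly monotone in the input subscheme.
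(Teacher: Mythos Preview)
Your proof is correct. The approach is essentially the same as the paper's---both rest on the fact that $\wt\swij$ is a closure and $\shij$ is a fiber of it, so inclusions are preserved---but you organize it differently. You isolate a single monotonicity lemma ($Y\subseteq Y'$ implies $\shij Y\subseteq \shij Y'$) and derive both the union and intersection containments from it, whereas the paper argues them separately: it shows directly that $\wt\swij\bigcup_k X_k=\bigcup_k\wt\swij X_k$ as schemes and then intersects with the fiber, and for the intersection it invokes ``closure of an intersection $\subseteq$ intersection of closures''. Your packaging is a bit cleaner in that one principle covers both directions; the paper's version makes the scheme-level equality $\wt\swij\bigcup_k X_k=\bigcup_k\wt\swij X_k$ explicit before passing to the fiber. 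The set-theoretic equality argument for unions and the $\Gr{1}{2}$ counterexample for intersections are the same in both.
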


\begin{proof}
  It is easy to see that $\wt\swij \Union_k X_k = \Union_k \wt\swij X_k$
  as schemes. Intersecting with $\{\infty\} \times \Grkn$, we get
  \begin{eqnarray*}
   \shij \Union_k X_k &=& (\{\infty\} \times \Grkn) \cap \Union_k \swij X_k \\
   &\supseteq& \Union_k (\{\infty\} \times \Grkn) \cap \wt\swij X_k
   \qquad \text{with equality as sets} \\
   &=& \Union_k \shij X_k.
  \end{eqnarray*}

  The latter inequality follows from the fact that an intersection of
  closures (the ones defining $\wt\swij \Union_k X_k$ 
  and each $\wt\swij X_k$) is contained in the closure of the intersection.
\end{proof}

The example $X_1 = \{0\}, X_2 = \{\infty\}$ in $\PP^1$
shows that both containments in proposition \ref{prop:shiftcupcap} 
can be strict (the first scheme-theoretically, 
the second even set-theoretically).

\begin{Proposition}\label{prop:sweep}
  Let $Y \subseteq \Grkn$ be $\sh_{i\to j}$-invariant and irreducible.
  Let $X \subset Y$ be a divisor and {\em not} $\sh_{i\to j}$-invariant.
  Then $\sw_{i\to j} X = Y$.
\end{Proposition}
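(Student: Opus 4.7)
The plan is to bound $\sw_{i\to j} X$ between $Y$ on the outside and, on the inside, the sweep of a single non-invariant irreducible component of $X$, whose dimension will be forced to equal $\dim Y$. For the outside bound, the hypothesis that $Y$ is $\sh_{i\to j}$-invariant includes $\sw_{i\to j} Y = Y$ (in the usage of theorem \ref{thm:safeshiftintro}), which by the very definition of sweep forces $\exp(te_{ij}) \cdot Y \subseteq Y$ for every $t\in\AA^1$. Consequently every $\exp(te_{ij}) \cdot X$ sits inside $Y$, and so does the closure, giving $\sw_{i\to j} X \subseteq Y$.

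Next I would isolate an irreducible component $X_k$ of $X$ with $\sh_{i\to j} X_k \neq X_k$. Each component of $X$ is itself a divisor in $Y$, and by proposition \ref{prop:shiftcupcap} one has $\sh_{i\to j} X = \bigcup_k \sh_{i\to j} X_k$ at the set-theoretic level. If every component satisfied $\sh_{i\to j} X_k = X_k$ then $X$ itself would be $\sh_{i\to j}$-invariant, contradicting the hypothesis, so such a component exists.

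The final step is to sweep $X_k$ alone. The variety $\sw_{i\to j} X_k$ is irreducible, being the closure of the image of the morphism $\AA^1 \times X_k \to \Grkn$ given by $(t,x) \mapsto \exp(te_{ij}) \cdot x$. I claim it properly contains $X_k$: otherwise each $\exp(te_{ij})$ would preserve $X_k$ setwise, the family $\wt\sw_{i\to j} X_k$ would be the trivial product $\PP^1 \times X_k$, and the fiber at $\infty$ would give $\sh_{i\to j} X_k = X_k$, contradicting the choice of $X_k$. Hence $\dim \sw_{i\to j} X_k > \dim X_k = \dim Y - 1$, so this irreducible subvariety of $Y$ has dimension at least $\dim Y$ and must equal $Y$. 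Combined with the outside bound, $\sw_{i\to j} X = Y$.

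The main subtlety to watch for is the component-selection step: one must rule out the scenario in which $\sh_{i\to j}$ permutes the components of $X$ without fixing any single one, but this is precisely what non-invariance of $X$ as a whole excludes; once a non-invariant component is in hand, the rest is a dimension count.
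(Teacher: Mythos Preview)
Your proof is correct and follows the same three-step skeleton as the paper's: the containment $\sw_{i\to j} X \subseteq Y$ from invariance of $Y$, a dimension increase forced by non-invariance of $X$, and then equality from irreducibility of $Y$. The paper's argument is terser, asserting directly that ``$X$ not $\sh_{i\to j}$-invariant'' gives $\dim \sw_{i\to j} X = \dim Y$ and then concluding; you make this step explicit by passing to an irreducible component $X_k$ and using that $\sw_{i\to j} X_k$ is the closure of the image of the irreducible $\AA^1\times X_k$, hence itself irreducible, so proper containment of $X_k$ forces a jump in dimension. That detour through a component is not needed if one reads ``divisor'' as irreducible (as in the paper's sole application to $\Pi_J$), but it is a legitimate way to handle the general case.

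One small remark on your closing paragraph: the ``permutation of components'' scenario you flag is not actually a threat to your argument. If $\sh_{i\to j}$ sent each $X_k$ to some other $X_{k'}$, then no component would be $\sh_{i\to j}$-invariant, which is exactly what you want in order to proceed; there is nothing to rule out. What you do need (and have) is the implication ``all $X_k$ invariant $\Rightarrow$ $X$ invariant'', which is the contrapositive you use to produce a non-invariant component.
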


\begin{proof}
  By $Y$'s $\sw_{i\to j}$-invariance, $\swij X \subseteq Y$.
  Since $X$ is not $\shij$-invariant, $\dim \swij X = \dim Y$.
  (In particular $\swij X$ is nonempty!)
  By $Y$'s irreducibility, $\swij X = Y$.
\end{proof}

\subsection{Connecting the two shifts}\label{ssec:connectshifts}

\junk{
To connect the combinatorial and geometric shifts, let $n\choose k$
denote the {\em collection} of $k$-element subsets, 
and for $S \in {n\choose k}$ let $coord(S) \in \Grkn$ denote the 
corresponding coordinate subspace. This gives a bijection 
$coord: {n\choose k} \ \wt\to\ \Grkn^T$, where the target is
the $T$-fixed point set.
}

We're now ready to compare the geometric and combinatorial shifts.
In a particularly simple case, we can guarantee equality.

\begin{Lemma}\label{lem:shiftSback}
  If $|S| = k$, then
  $ \shij\ \{V : p_S(V) = 0\} = \{V : p_{\shji S}(V) = 0\}. $

  More generally, for $S \subseteq [n]$ of any size, and
  $$ X_{S\leq r} := \{\rowspan(M)\ :\   M \in M_{k\times n},\
  \rank\ M = k,\ \rank\ (\text{columns $S$ of }M) \leq r \}, $$
  we have $ \shij\ X_{S\leq r} = X_{(\shji S) \leq r}$, i.e. 
  ``rank conditions shift backwards''.
\end{Lemma}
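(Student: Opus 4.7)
The plan is to handle the two statements in sequence. For the first, with $|S| = k$, the variety $\{V : p_S(V) = 0\}$ is a single Pl\"ucker divisor. I would compute the action of $\exp(te_{ij})$ on $p_S$ via multilinearity of the determinant in columns. If $\shji S = S$ (i.e.\ $j \notin S$ or $i \in S$), then $p_S$ is invariant under $\exp(te_{ij})$ and the divisor is shift-invariant. Otherwise $j \in S$ and $i \notin S$, and expanding column $j$ as (column $j$)\,$-\,t$\,(column $i$) yields
$$ p_S(M \exp(-te_{ij})) = p_S(M) - t\sigma\, p_{\shji S}(M) $$
for some sign $\sigma = \pm 1$ coming from reordering columns to place $i$ in its sorted position. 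Hence $\exp(te_{ij}) \cdot \{p_S = 0\}$ is cut out by $p_S - t\sigma\, p_{\shji S}$, whose flat limit as $t \to \infty$ (after rescaling by $-1/(t\sigma)$) is the Pl\"ucker divisor $\{p_{\shji S} = 0\}$.

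For the general statement, the crucial observation is that $X_{S\leq r}$ depends on $S$ only through the coordinate subspace $\coord([n]\setminus S)$:
$$ X_{S\leq r} = \{V \in \Grkn : \dim(V \cap \coord([n]\setminus S)) \geq k - r\}. $$
For any subspace $W \subseteq \AA^n$ of the appropriate dimension, write $X_W^{(d)} := \{V : \dim(V \cap W) \geq d\}$. Since intersections are $GL(n)$-equivariant, $\exp(te_{ij}) \cdot X_{S \leq r} = X_{W_t}^{(k-r)}$ where $W_t := \exp(te_{ij}) \cdot \coord([n]\setminus S)$.

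Next I would compute $W_t$ explicitly. Using the convention that $\exp(te_{ij})$ acts on vectors by $v \mapsto v + t v_i e_j$, we have $\exp(te_{ij}) e_m = e_m + t\delta_{m,i}\, e_j$. In the nontrivial case $i \in [n]\setminus S$ and $j \in S$, the subspace $W_t$ is therefore spanned by $\{e_m : m \in [n]\setminus S,\ m \neq i\} \cup \{e_i + te_j\}$. Rescaling the last spanning vector by $1/t$ gives $e_j + (1/t) e_i \to e_j$ as $t \to \infty$, so $W_t$ extends to $W_\infty = \coord\bigl(([n]\setminus S \setminus \{i\}) \cup \{j\}\bigr) = \coord([n] \setminus \shji S)$ in $\Gr{n-|S|}{n}$.

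Finally, the incidence variety $\{(V, W) \in \Grkn \times \Gr{n-|S|}{n} : \dim(V \cap W) \geq k - r\}$ is $GL(n)$-invariant, and its projection to the second factor has fibers $X_W^{(k-r)}$ that are all $GL(n)$-translates of each other, hence the projection is flat. Pulling back along $\PP^1 \to \Gr{n-|S|}{n}$, $t \mapsto W_t$, recovers $\wt\swij X_{S\leq r}$ as a flat family over $\PP^1$, whose fiber at $t = \infty$ is the scheme-theoretic limit $X_{W_\infty}^{(k-r)} = X_{\shji S \leq r}$. This is $\shij X_{S \leq r}$ by definition. The main obstacle will be carefully verifying the pullback identification scheme-theoretically (so that the pullback of the flat incidence family is exactly $\wt\swij X_{S\leq r}$, not a nonreduced thickening); but once the flatness and the explicit formula for $W_\infty$ are in hand, the result follows.
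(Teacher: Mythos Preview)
Your argument is correct, and for the general statement it follows a genuinely different route from the paper's. The paper works directly with the rank condition on columns: it writes $Y_t = \{M : \rank(\text{columns }S\text{ of }M\exp(-te_{ij})) \leq r\}$, does the same case split on whether $j\in S$ and $i\in S$, and in the interesting case divides column $j$ by $-t$ to see the limiting rank condition on $\shji S$. That gives only the containment $\shij X_{S\leq r} \subseteq X_{\shji S\leq r}$; the paper then closes up by a Hilbert-polynomial count (the flat limit has the same Hilbert polynomial as $X_{S\leq r}$, and $X_{\shji S\leq r} = (i\leftrightarrow j)\cdot X_{S\leq r}$ has it too, so the inclusion is an equality).

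Your route instead rewrites $X_{S\leq r}$ as the incidence locus $\{V : \dim(V\cap W)\geq k-r\}$ with $W=\coord([n]\setminus S)$, reducing the problem to computing $\lim_{t\to\infty} \exp(te_{ij})\cdot W$ in $\Gr{n-|S|}{n}$, and then invoking flatness of the universal incidence family over that Grassmannian. This is more conceptual: it makes the ``backwards'' shift transparent (forward shift on $W$ is backward shift on its complement $S$) and avoids the separate Hilbert-polynomial step, since flatness over the homogeneous base, pulled back along $\PP^1\to\Gr{n-|S|}{n}$, already forces the $\infty$-fiber to be the scheme-theoretic limit. Your stated ``main obstacle'' is in fact already handled by that flatness: over a smooth curve, flatness means no associated primes supported over $\infty$, so the pulled-back family is exactly the closure $\wt\swij X_{S\leq r}$ and not a thickening. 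The paper's approach, by contrast, is more elementary (no incidence varieties, no appeal to homogeneity), at the cost of needing the Hilbert-polynomial comparison to upgrade containment to equality.
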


\begin{proof}
  The first is the special case $|S|=k, r=k-1$ of the second. 
  Let 
  \begin{eqnarray*}
    Y_t &:=& \exp(t e_{ij}) \cdot \{M \in M_{k\times n} :
    \rank\ (\text{columns $S$ of }M) \leq r \} \\
    &=& \{M \in M_{k\times n} :
    \rank\ (\text{columns $S$ of } M \exp(-t e_{ij})) \leq r \}. 
  \end{eqnarray*}
  The matrix $M \exp(-t e_{ij}))$ matches $M$, except the $j$th 
  column $\vec m_j$ has been replaced by $\vec m_j - t \vec m_i$.
  The shift is $\lim_{t\to\infty} Y_t$.

  If $j\notin S$, then $Y_t$ puts no contraints on column $j$, 
  so $Y_t = Y_0 = X_{S\leq r}$ for all $t$, and $\shij X_{S\leq r} = X_{S\leq r}$.
  In this case $S = \shji S$, too.

  If $i,j\in S$, then subtracting $t$ times column $j$ from column $i$
  doesn't change the rank of columns $S$,
  so $Y_t = Y_0 = X_{S\leq r}$ for all $t$, and $\shij X_{S\leq r} = X_{S\leq r}$.
  Again, $S = \shji S$.

  The interesting case is $j\in S,i\notin S$. For $t\neq 0$, the rank
  of columns $S$ in $M\in Y_t$ doesn't change if we divide column $j$ by $-t$.
  So the rank condition is now
  $$ \rank \bigg( \{\vec m_i - t^{-1} \vec m_j\} 
  \ \cup\ \{\vec m_k : k \in S\setminus j\} \bigg) \leq r. $$
  In the limit, this becomes $\rank($columns $\shji S)\leq r$.

  Effectively, we have found some equations that hold on $\wt\swij X_{S\leq r}$,
  and intersected them with the $t=\infty$ fiber,
  showing the inclusion $\shij X_{S\leq r} \subseteq X_{\shji S\leq r}$. 
  Since $\shij X_{S\leq r}$ is a flat limit of $X_{S\leq r}$, they must have
  the same Hilbert polynomial (with respect to the Pl\"ucker embedding).
  Meanwhile, $X_{\shji S\leq r} = (i\leftrightarrow j) \cdot X_{S\leq r}$,
  so $X_{\shji S\leq r}$ also has this same Hilbert polynomial.
  Consequently the inclusion of schemes is equality.
\end{proof}

In the most general case, we have an inequality:

\begin{Proposition}\label{prop:shiftCback}
  Let $X \subseteq \Grkn$ be $T$-invariant. Then 
  $$ \coord^{-1}(\shij X) \quad\subseteq\quad \shij \coord^{-1}(X). $$
%  from which proposition \ref{prop:connection} lets us infer
%  $ (\shij X)_{red} \subseteq \coord(\shij \coord^{-1}(X))$. 
  If $X$ is reduced, then
  $$ \shij X \quad \subseteq\
  \coord(\shij \coord^{-1}(X)) $$
  which, by proposition \ref{prop:connection}, implies the first containment.
%  \bigcap_{S \in \sh_{j\to i} \calC} \left\{ V \in \Grkn : p_S(V)=0 \right\}.$$
\end{Proposition}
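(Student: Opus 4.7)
The plan is to establish the stronger, second containment directly for reduced $X$; the first then follows by applying $\coord^{-1}$ to both sides and invoking the tautology $\coord^{-1}(\coord(\mathcal{D})^T) = \mathcal{D}$ from Proposition \ref{prop:connection}. For a possibly non-reduced $T$-invariant $X$, neither $\coord^{-1}(X)$ nor the underlying set of $\shij X$ changes on passing to $X_{\rm red}$, so the first containment reduces to the reduced case as well. Set $\mathcal{C} := \coord^{-1}(X)$. Since $\coord(\shij \mathcal{C}) = \bigcap_{T \notin \shij \mathcal{C}} \{p_T = 0\}$, it suffices to show that $p_T$ vanishes on $\shij X$ for every $k$-subset $T \notin \shij \mathcal{C}$.

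First I would record how each Pl\"ucker coordinate $p_T$ transforms under $\exp(te_{ij})$: by multilinearity of the determinant, applied to the column operation underlying $\exp(te_{ij})$ (compare Lemma \ref{lem:shiftSback}), one has $p_T \circ \exp(te_{ij}) = p_T$ unless $i \notin T$ and $j \in T$, in which case $p_T \circ \exp(te_{ij}) = p_T + \epsilon\, t\, p_{(T \setminus j) \cup i}$ for some sign $\epsilon = \pm 1$. Then I would unpack what $T \notin \shij \mathcal{C}$ says from the definition of the combinatorial shift in Section \ref{ssec:combshift}, splitting on whether $i,j$ belong to $T$. When $i,j$ are both in $T$ or both out of $T$, the condition $T \notin \shij\mathcal{C}$ simply says $T \notin \mathcal{C}$; then $p_T$ vanishes on $X$ by Lemma \ref{lem:Plucker}, and its $\exp(te_{ij})$-invariance propagates the vanishing to $\wt\swij X$ and hence to $\shij X$. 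When $i \notin T$ and $j \in T$, $T \notin \shij\mathcal{C}$ forces both $T$ and $(T\setminus j)\cup i$ to lie outside $\mathcal{C}$, so both summands in the transformation formula vanish identically on $X$; hence $p_T$ vanishes on every $\exp(te_{ij})\cdot X$, on the closure $\wt\swij X$, and on $\shij X$.

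The subtle case is $i \in T$, $j \notin T$: here $p_T$ is again $\exp(te_{ij})$-invariant, but the hypothesis $T \notin \shij\mathcal{C}$ leaves open the possibility that $T \in \mathcal{C}$ with only $T' := (T \setminus i) \cup j$ lying outside $\mathcal{C}$, so one cannot argue directly that $p_T$ vanishes on $X$. Instead, $p_{T'}$ vanishes on $X$ by Lemma \ref{lem:Plucker}; then Lemma \ref{lem:shiftSback} (``rank conditions shift backwards''), combined with the observation $\shji T' = T$, identifies $\shij\{p_{T'} = 0\} = \{p_T = 0\}$, and monotonicity of $\shij$ (immediate from its closure definition) pushes the vanishing of $p_{T'}$ on $X$ to the desired vanishing of $p_T$ on $\shij X$. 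Matching this backward shift of rank conditions against the forward combinatorial rule that ejects $T$ from $\shij\mathcal{C}$ is the one place where the argument has real content beyond invariance bookkeeping on Pl\"ucker coordinates.
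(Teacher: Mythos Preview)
Your proof is correct and follows essentially the same route as the paper's: both reduce to the reduced case, rephrase the target as the vanishing of $p_T$ on $\shij X$ for each $T \notin \shij\,\coord^{-1}(X)$, and then case-split on the membership of $i,j$ in $T$, invoking Lemma~\ref{lem:Plucker} and Lemma~\ref{lem:shiftSback} (plus monotonicity of $\shij$) at the same key junctures. The only cosmetic difference is organizational: the paper treats the sets $M\cup\{i\}$ and $M\cup\{j\}$ in pairs, using the shift-invariant decomposition $\{p_{M\cup\{i\}}=0\}\cap\{p_{M\cup\{j\}}=0\} = X_{M<k-1}\cup X_{M\cup\{i,j\}<k}$, whereas you handle each $T$ individually via the explicit transformation rule $p_T\circ\exp(te_{ij}) = p_T + \epsilon\,t\,p_{(T\setminus j)\cup i}$.
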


\begin{proof}
  Note that neither side of the first claim changes if we replace $X$ 
  by its reduction. So we can assume $X$ reduced in both claims. 
  We want to show 
  $$ \text{for all $S \notin \shij \coord^{-1}(X)$}, \qquad
  \shij X \subseteq X_{S < k} $$
  as the intersection of those divisors $X_{S<k}$ defines the right-hand side
  of the second claim.

  If $S\notin \coord^{-1}(X)$, 
  then $X \subseteq X_{S<k}$ by lemma \ref{lem:Plucker},
  and $\shij X \subseteq \shij X_{S<k} = X_{\shji S < k}$ 
  by lemma \ref{lem:shiftSback}. Hence $\shji S \notin \coord^{-1}(\shij X)$.

  If $S \supseteq \{i,j\}$ or $S \cap \{i,j\} = \emptyset$, 
  then $S = \shij S = \shji S$ and $S \in \calC \iff S \in \shij \calC$. %\break
  In particular, we've shown for these $S$ that
  $S \notin \shij \coord^{-1}(X) \implies \shij X \subseteq X_{S < k}$.

  It remains to consider those $S$ that contain $i$ or $j$ but not both,
  which we will do in pairs.
  Let $M$ vary over ${[n]\setminus \{i,j\}\choose k-1}$
  and look at $\shij \coord^{-1}(X) \ \cap\ \{M\cup \{i\}, M\cup \{j\}\}$,
  which may be $\emptyset$, or $\{M\cup \{j\}\}$, 
  or $\{M\cup \{i\}, M\cup \{j\}\}$, but not $\{M \cup \{i\}\}$.

  In the first case, 
  $\coord^{-1}(X) \cap \{M\cup \{i\}, M \cup \{j\}\} = \emptyset$ too.
  Hence $X \subseteq X_{M \cup \{i\} < k} \cap X_{M \cup \{j\} < k} 
  = X_{M < k-1}\, \cup\, X_{M \cup \{i,j\} < k}$,
%  (much as in proposition \ref{prop:ASBdecompose}),
  and the latter union is visibly shift-invariant,
  so $\shij X \subseteq X_{M \cup \{i\} < k} \cap X_{M \cup \{j\} < k}$ too.

  In the second case
  $S = M \cup \{i\} \notin \shij \coord^{-1}(X) \ni M \cup \{j\}$,
  so $\coord^{-1}(X) \not\supseteq \{M\cup \{i\}, M\cup \{j\}\}$.
  Whichever one is missing, $h=i$ or $j$, gives us a containment 
  $X \subseteq X_{M \cup \{h\}<k}$. 
  Shifting it, we learn $\shij X \subseteq \shij X_{M \cup \{h\}<k} 
  = X_{\shji (M \cup \{h\})<k} = X_{M \cup \{i\}<k} = X_{S < k}$.

  In the third case, our $S \notin \shij \coord^{-1}(X)$ 
  can be neither of $\{M\cup \{i\}, M\cup \{j\}\}$, so there
  is nothing left to prove.
\end{proof}

These containments are strict for $X = \Gr{1}{2}^T \iso \{0,\infty\}$,
where $\sh_{1\to 2} \,\coord^{-1}(X) \neq \coord^{-1}(\sh_{1\to 2} X)$,
so we'll need a condition, ``$T$-convexity'', to rule out such examples.

The only $T$-invariant irreducible curves in $\Grkn$ are of the form
$$ L = \{V \in \Grkn : \coord(M) < V < \coord(M \cup \{i,j\}), \qquad
M \in {[n]\setminus \{i,j\} \choose k-1} $$
connecting the two fixed points
$L^T = \{\coord(M\cup\{i\}), \coord(M\cup\{j\})\}$.
Call a subset $X\subseteq \Grkn$ \defn{$T$-convex} if 
$L^T \subseteq X \implies L \subseteq X$ for each such $L$.

\begin{Theorem}\label{thm:Tconvex}
  Let $X\subseteq \Grkn$ be $T$-invariant.
  \begin{enumerate}
  \item If $X$ is irreducible, then $X$ is $T$-convex.
  \item If $X$ is defined by the vanishing of a set of Pl\"ucker coordinates,
    then $X$ is $T$-convex.
  \item If $X$ is $T$-convex, then
    $$ \coord^{-1}(\sh_{i\to j} X)^T = \left( \sh_{i\to j} \coord^{-1}(X^T) \right).$$
  \item If $X$ is irreducible, and the collection
    $\coord^{-1}(X^T)$ is not $\sh_{i\to j}$-invariant, 
    then the map $\wt\sw_{i\to j} X \to \sw_{i\to j} X$ is a degree $1$ map
    of varieties.
  \end{enumerate}
\end{Theorem}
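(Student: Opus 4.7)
The plan is to handle Theorem~\ref{thm:Tconvex} in its four pieces using three different techniques; parts~(1)--(3) should go through reasonably directly, while part~(4) is the main technical obstacle.

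For~(2), a generic $v\in L$ has the form $\coord(M)\oplus\mathrm{span}(ae_i+be_j)$, so the only nonvanishing Pl\"ucker coordinates of $v$ are $p_{M\cup\{i\}}$ and $p_{M\cup\{j\}}$. Each of these is nonzero at one of the two endpoints of $L$ in $X$, so by Lemma~\ref{lem:Plucker} neither is among the Pl\"ucker coordinates defining $X$; hence every defining Pl\"ucker coordinate vanishes at $v$, giving $L\subseteq X$. For~(1), I would invoke the Atiyah--Guillemin--Sternberg convexity theorem: the moment polytope $\mu(X)$ equals $\mathrm{conv}(\mu(X^T))$, with edges in bijection with one-dimensional $T$-orbit closures in $X$. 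Inside the hypersimplex $\mu(\Grkn)$, two vertices $\mu(p),\mu(q)$ corresponding to fixed points joined by a $T$-invariant curve $L$ span an edge of the ambient hypersimplex, and any such ambient edge joining two vertices of the subpolytope $\mu(X)$ is automatically an edge of $\mu(X)$. The corresponding one-dimensional $T$-orbit closure in $X$ with endpoints $p,q$ must therefore equal $L$, by uniqueness of $T$-invariant curves in $\Grkn$ between two fixed points.

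For~(3), the containment $\subseteq$ is Proposition~\ref{prop:shiftCback}; for $\supseteq$, take $S\in\shij\coord^{-1}(X^T)$ and show $\coord(S)\in\shij X$ by three cases. In case~(A), $S\in\coord^{-1}(X^T)$ with $\shij S=S$, so $\coord(S)$ is $\exp(te_{ij})$-fixed and lies in $X\subseteq\shij X$. In case~(B), $S\notin\coord^{-1}(X^T)$ and $S=\shij S'$ for the unique preimage $S'=(S\setminus\{j\})\cup\{i\}\in\coord^{-1}(X^T)$; Lemma~\ref{lem:lowergeomshifts} computes $\shij\{\coord(S')\}=\{\coord(S)\}$, and monotonicity of shifting (Proposition~\ref{prop:shiftcupcap}) yields $\coord(S)\in\shij X$. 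In case~(C), both $S$ and $\shij S\neq S$ lie in $\coord^{-1}(X^T)$; the $T$-invariant curve $L$ between $\coord(S)$ and $\coord(\shij S)$ then lies in $X$ by $T$-convexity, and being the closure of a $U:=\exp(\AA^1 e_{ij})$-orbit it is itself $\shij$-invariant, so $\coord(S)\in L\subseteq\shij X$.

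For~(4), first observe that the hypothesis forces $X$ to be not $U$-invariant: if it were, part~(1) would give $L\subseteq X$ for every $p\in X^T$ not $U$-fixed, forcing $\shij p\in X^T$ and thus $\shij$-invariance of $\coord^{-1}(X^T)$. Hence $\dim\swij X=\dim X+1=\dim\wt\swij X$, so $\pi_2\colon\wt\swij X\to\swij X$ is generically finite of some degree $d$, and the core of~(4) is to show $d=1$. I would pick $S\in\coord^{-1}(X^T)$ with $\shij S\notin\coord^{-1}(X^T)$ and set $y^*:=\coord(S)$; then $y^*$ is not $U$-fixed and $\shij y^*\notin X$, so $L:=\overline{U\cdot y^*}\not\subseteq X$ and $U\cdot y^*\cap X$ is a finite closed subscheme of $U\cdot y^*\cong\AA^1$. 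Since $y^*$ is $T$-fixed, this subscheme is $\Gm$-invariant under the weight-$\alpha$ scaling ($\alpha=y_i-y_j$), so it equals $\{0\}$ set-theoretically, making the set-theoretic fiber of $\pi_2$ over $y^*$ a single point. The remaining step is to verify that the scheme-theoretic fiber is reduced, equivalently that the orbit $U\cdot y^*$ is transverse to $X$ at $y^*$; once reducedness is established, upper semi-continuity of fiber length gives $d\le 1$ and hence $d=1$. Securing this transversality in full generality --- via a tangent-weight calculation at $y^*\in\Grkn$ showing that the $-\alpha$-weight space of $T_{y^*}X$ vanishes --- is the principal technical obstacle, though it is direct in the Pl\"ucker-cut-out case that suffices for the applications.
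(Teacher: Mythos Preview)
Your treatment of parts~(2) and~(3) is essentially correct and close to the paper's (your~(2) is actually a bit more direct: the paper deduces it from~(1) by noting each Pl\"ucker divisor is irreducible and intersections of $T$-convex sets are $T$-convex). One small slip in case~(A) of~(3): the inclusion ``$X\subseteq\shij X$'' is false in general; what you mean is that a $U$-fixed point of $X$ lies in every $\exp(te_{ij})\cdot X$, hence in the flat limit $\shij X$.

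For~(1) you take a genuinely different route via AGS convexity, but the step ``any ambient hypersimplex edge joining two vertices of $\mu(X)$ is automatically an edge of $\mu(X)$'' is not automatic for arbitrary subpolytopes; it holds because $\mu(X)$ is a \emph{matroid} polytope (GGMS), and for those the edges are exactly basis exchanges. You should say so. The paper's argument is more elementary: choose a one-parameter subgroup whose Bia\l ynicki-Birula sink in $\Grkn$ is exactly $L$; if $X\cap L=L^T$ then $X$ has two disjoint open B--B basins, contradicting irreducibility.

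The real gap is in~(4), as you yourself flag. You reduce to showing the scheme-theoretic fiber of $\wt\swij X\to\Grkn$ over $y^*=\coord(S)$ is a reduced point, but then appeal to a transversality you cannot establish. The paper's resolution is clean and does not require any tangent-weight analysis. Since $\coord(M\cup\{j\})\notin X$ (where $M=S\setminus\{i\}$), Lemma~\ref{lem:Plucker} gives $X\subseteq D:=\{p_{M\cup\{j\}}=0\}$, hence $\wt\swij X\subseteq\wt\swij D$. One checks directly that $\wt\swij D$ lies in the hypersurface
\[
H=\bigl\{\,([t:u],V)\in\PP^1\times\Grkn : t\,p_{M\cup\{i\}}(V)+u\,p_{M\cup\{j\}}(V)=0\,\bigr\}.
\]
Over $V=\coord(M\cup\{i\})$ the equation becomes $t=0$, so the fiber of $H\to\Grkn$ there is the single reduced point $(0,\coord(M\cup\{i\}))$; a fortiori the same holds for $\wt\swij X$. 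This simultaneously handles reducedness and rules out any contribution at $t=\infty$, both of which your argument left open.
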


\begin{proof}
  \begin{enumerate}
  \item Let $L = \{V \in \Grkn : \coord(M) < V < \coord(M \cup \{i,j\})$
    be a line such that $L^T \subset X$.
    Consider the one-parameter subgroup 
    $$ d:\Gm \to T,\quad t \mapsto \diag(t^{d_1},\ldots,t^{d_n}) 
    \qquad \text{where} 
    \quad d_k = \begin{cases} 
      0 &\text{if $k\in M$} \\ 
      1 &\text{if $k=i,j$} \\
      2 &\text{otherwise} \end{cases} $$
    which fixes $L$ pointwise.
    Under $\Grkn$'s Bia\l ynicki-Birula decomposition \cite{BB} using $d$,
    the sink is $L$. (One way to compute the sink is as the minimum
    level set of $d$'s moment map, which takes $\coord(Q) \mapsto \sum_Q d_q$.)

    Now obtain $X$'s B-B decomposition by intersecting with $\Grkn$'s.
    Since $X \cap L$ is $T$-invariant and contains $L^T$, either $X \supseteq L$
    or $X\cap L = L^T$. In the latter case, each of $L^T$'s two points 
    gives a sink in $X$, hence two disjoint open basins,
    contradicting $X$'s irreducibility.
  \item
    The Schubert divisor, defined by the vanishing of the first Pl\"ucker
    coordinate, is irreducible, hence $T$-convex. The other Pl\"ucker divisors
    are permutations of the Schubert divisor, hence $T$-convex.
    The intersection of two $T$-convex sets is again $T$-convex.

    (In fact such a scheme is even ``convex'': for any two points in
    $X$ connected by a line in $\Grkn \subseteq \PP(Alt^k \AA^n)$, 
    the whole line is in $X$.
    Not every irreducible $T$-invariant $X$ is convex; consider the
    subvariety of $\Gr{2}{4}$ defined by $p_{12} p_{34} = p_{14} p_{23}$,
    and the non-$T$-invariant pencil
    $$ \left\{ \rowspan\begin{bmatrix} 1&0&1&a\\ 0&1&a&1 \end{bmatrix} 
      : a\in\AA^1\right\}. $$
    On there, the equation becomes $1(1-a^2) = 1(-1)$, with 
    two solutions $a = \pm 1$.)
  \item We already have the $\subseteq$ containment by proposition 
    \ref{prop:shiftCback}. Also,
    \begin{eqnarray*}
      \sh_{i\to j} X 
      &\supseteq& \sh_{i\to j} (X^T) 
      = \sh_{i\to j} \Union_{\coord(S) \in X^T} \coord(S) 
      \supseteq \Union_{\coord(S) \in X^T} \sh_{i\to j} \coord(S) \\
      &=& \Union_{\coord(S) \in X^T} \coord(\sh_{i\to j} S)
      = \coord\left \{\sh_{i\to j} S : \coord(S) \in X^T \right\}.
    \end{eqnarray*}
    By lemma \ref{lem:shiftlevels},
    this last is contained in $\coord\left( \sh_{i\to j} \coord^{-1}(X^T) \right)$,
    and the set difference is 
    $$ \{\coord(S) 
        : \exists S \neq \shij(S),\ S,\shij(S) \in \coord^{-1}(X^T) \}. $$
    If there is such an $S$, let $M = S \setminus \{i\} = S \cap \shij(S)$, 
    and $L = \{V \in \Grkn : \coord(M) < V < \coord(M \cup \{i,j\})$.
    So far we've determined that $L^T \subseteq X$. Since $X$ is
    assumed $T$-convex, $L \subseteq X$. 

    The key fact is that $L$ is $\shij$-invariant.
    Hence $\shij(X) \supseteq \shij(L) = L \ni \coord(S)$, contradicting
    the choice of $S$.
  \item The space $\{ (t,\exp(te_{ij})\cdot x) : t\in \AA^1, x\in X\}$ 
    is isomorphic to $\AA^1 \times X$, hence irreducible. So its closure
    $\wt\swij(X)$ is irreducible, and the image $\swij(X)$ of that 
    is irreducible.
    
    To show the projection is degree $1$, it suffices to find a point
    in $\swij(X)$ over which the map is an isomorphism.
    By the non-invariance assumption, there exists a subset $M$ 
    such that $\coord(M \cup \{i\}) \in X, \coord(M \cup \{j\}) \notin X$.
    Hence the map $\wt\swij(X) \onto \swij(X)$ takes
    $$ (0, \coord(M \cup \{i\})
    \qquad\mapsto\qquad \coord(M \cup \{i\}). $$
%    so the fiber over $\coord(M \cup \{i\})$ is at least a point.

    Let $D$ be the Pl\"ucker divisor $\{p_{M\cup \{j\}} = 0\}$. 
    By lemma \ref{lem:Plucker}, $D \supseteq X$.
    Since $D$ is codimension $1$ in $\Grkn$, $\wt\swij D$ is codimension $1$
    in $\PP^1 \times \Grkn$, and is easily seen to lie in the hypersurface
    $$ H := \left\{ ([t,u], V) \in \PP^1 \times \Grkn \ :\
      t p_{M \cup \{i\}} + u p_{M \cup \{j\}} = 0 \right\}. $$
    So far we have the maps
    $$ \{(0,\coord(M\union \{i\}))\} \into \wt\swij(X) 
    \into \wt\swij(D) \into H \onto \Grkn. $$
    Now we claim that the fiber over $\coord(M \cup \{i\})$ of
    this last projection $H \onto \Grkn$ is already a reduced point
    (namely, $(0,\coord(M\union \{i\}))$); it is
    $$ \{ ([t,u], \coord(M\union \{i\})) : t p_{M \cup \{i\}} = 0 \} $$
    and the remaining projective coordinate $p_{M \cup \{i\}}$ does not vanish. 

    Hence the fiber $\coord(M \cup \{i\})$ of $\wt\swij(X) \to \Grkn$
    is a reduced point. 
  \end{enumerate}
  \vskip -.25in
\end{proof}

\junk{  For $S \in {n\choose k}$,
  $$ \sh_{i\to j} \ \{\coord(S) \} = \coord(\sh_{i\to j} S). $$
  The left $\sh$ is the geometric shift, 
  the right $\sh$ the combinatorial shift.
}

Finally we are ready to give a theorem that can, in certain cases, 
calculate shifts. (While we hoped to use it with proposition
\ref{prop:matchings} to compute the shifts in theorem \ref{thm:safeshiftintro},
we will instead use proposition \ref{prop:safeshiftupperbound}.)

\begin{Theorem}\label{thm:shiftverify}
  Let $X,X'_1,\ldots,X'_m \subseteq \Grkn$ be $T$-invariant
  subvarieties of the same dimension. 
  Assume that $ \coord(\shij \coord^{-1}(X^T)) = \Union_{i=1}^m X'_m $ as schemes,
  and $\forall i$, $(X'_i)^T \setminus \Union_{j\neq i} (X'_j)^T \neq \emptyset$.
  Then $\shij(X) = \Union_{i=1}^m X'_m$.  
\end{Theorem}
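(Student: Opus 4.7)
The plan is to sandwich $\shij X$ between the reduced union $\Union_i X'_i$ and itself as closed subschemes of $\Grkn$, so that the ideals agree.

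The upper bound is immediate: $X$ is reduced (being a variety), so Proposition~\ref{prop:shiftCback} combined with the hypothesis gives
$$ \shij X \ \subseteq\ \coord\bigl(\shij \coord^{-1}(X^T)\bigr) \ =\ \Union_{i=1}^m X'_i. $$

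For the reverse direction I first pin down $(\shij X)^T$. Since $X$ is irreducible it is $T$-convex by Theorem~\ref{thm:Tconvex}(1), so Theorem~\ref{thm:Tconvex}(3) upgrades Proposition~\ref{prop:shiftCback} to the equality of collections $\coord^{-1}(\shij X) = \shij \coord^{-1}(X^T)$. Applying $\coord^{-1}$ to the hypothesis and invoking Proposition~\ref{prop:connection} (so that $\coord^{-1}\circ\coord$ is the identity on collections) yields $\shij \coord^{-1}(X^T) = \Union_i \coord^{-1}(X'_i)$, and hence $(\shij X)^T = \Union_i (X'_i)^T$. In particular, each distinguished $T$-fixed point $p_i \in (X'_i)^T \setminus \Union_{j\neq i}(X'_j)^T$ lies in $\shij X$.

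Because $\wt\swij X$ is the closure of $\AA^1 \times X$ in $\PP^1\times\Grkn$, it is irreducible of dimension $\dim X + 1$ and flat over $\PP^1$, so $\shij X$ is equidimensional of dimension $\dim X = \dim X'_i$. Fix $i$: the irreducible component $Y_i$ of $(\shij X)_{\mathrm{red}}$ containing $p_i$ is an irreducible $T$-invariant closed subset of $\Union_j X'_j$, so $Y_i \subseteq X'_j$ for some $j$; the uniqueness of $p_i$ to $X'_i$ forces $j=i$; then $\dim Y_i = \dim X'_i$ together with irreducibility gives $Y_i = X'_i$. Each $X'_i$ is therefore a reduced closed subscheme of $\shij X$, and intersecting the corresponding ideal inclusions $\mathcal I_{\shij X} \subseteq \mathcal I_{X'_i}$ gives $\Union_i X'_i \subseteq \shij X$ as schemes. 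Combined with the upper bound, this forces equality of ideals. The main obstacle is the scheme-theoretic (not merely set-theoretic) lower bound: one must leverage the reducedness of each variety $X'_i$ together with the equidimensionality of the flat limit to rule out both missing top-dimensional components and spurious nilpotents in $\shij X$.
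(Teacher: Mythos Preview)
Your proof is correct and follows essentially the same strategy as the paper: the scheme-theoretic upper bound via Proposition~\ref{prop:shiftCback}, equidimensionality of the flat limit, and the use of Theorem~\ref{thm:Tconvex}(3) on the distinguished fixed points to force every $X'_i$ to appear. The only cosmetic differences are that the paper phrases the last step as a contradiction (``assume some $X'_i$ is missing'') and leaves the passage from set-theoretic to scheme-theoretic equality implicit, whereas you argue directly and spell out the ideal inclusions; your justification of equidimensionality via the Cartier-divisor fiber in the irreducible total space is also a bit cleaner than the paper's Zariski's Main Theorem footnote.
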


\begin{proof}
  Since $X$ is reduced, 
  $$  \begin{array}{rclr}
    X &\subseteq& \bigcap_{S\in \calC} \left\{ V \in \Grkn : p_S(V)=0 \right\}
    & \text{by lemma \ref{lem:Plucker}, and} \\ 
    &&&\\
    \shij(X) &\subseteq& 
    \bigcap_{S \in \shji \calC} \left\{ V \in \Grkn : p_S(V)=0 \right\}
    & \quad \text{by proposition \ref{prop:shiftCback}.} 
  \end{array} 
  $$
  We know the right side by assumption, with the result that 
  $\shij(X) \subseteq \Union_{i=1}^m X'_m$. 

  Since $X$ is irreducible, it is equidimensional, so its flat limit
  $\shij(X)$ is set-theoretically equidimensional\footnote{%
    This is a standard application of Zariski's Main Theorem.  If
    $\shij(X)$ contained a geometric component $C$ of some smaller
    dimension $j$, we could cut the family $\wt\swij(X)$ down with
    $\PP^1\times P$, where $P$ is a general plane of codimension $j$,
    and discover that the still-irreducible $X \cap P$ degenerates to
    $\shij(X)\cap P$. But the latter contains isolated points $C\cap P$, 
    contradicting the connectivity guaranteed by ZMT.}  
  of that same dimension. Being contained in $\Union_{i=1}^m X'_m$, 
  it must be (as a set) a union of some of these components.

  Assume some $X'_i \not\subseteq \shij(X)$. By our second assumption,
  there exists a coordinate subspace 
  $V \in (X'_i)^T \setminus \cup_{j\neq i} (X'_j)^T$. 
  Hence $V \notin \shij(X)$. 
  This contradicts part (3) of theorem \ref{thm:Tconvex}, so establishing
  the opposite containment $\shij(X) \supseteq \Union_{i=1}^m X'_m$. 
\end{proof}

\subsection{Safe shifts of positroid varieties}\label{ssec:safeshifts}

Recall that $[a,b]$ denotes either the interval (if $a\leq b$) 
or the cyclic interval $[a,n] \cup [1,b]$ (if $a>b$).

\junk{

Call a shift $\shij$ \defn{a safe shift for the interval $[a,b]$} if
$\shji [a,b]$ (backwards shift!) is again an interval. 
Equivalently, if $i<j$ and $\shij$ is {\em not} safe for $[a,b]$, then
$i < a \leq j \leq b$, $a<b$, and $(a,b) \neq (i+1,j)$. 
There are three ways for it to be safe; $[a,b]$ might be $\shij$-invariant,
the \defn{backward safe shift} $\shji [i+1,j] = [a,j-1]$, 
and the \defn{forward safe shift} $\shji [j,i-1] = [j+1,i]$.

}

In the rest of this section 
\begin{itemize}
\item $i<j$ are fixed,
\item $\shij$ is a \defn{(nontrivially) safe shift for $J$}, meaning
  \begin{itemize}
  \item $J$ is a bounded juggling pattern of length $n$,
  \item $(i+1,j)$ is a crucial box for $J$ (giving a backward safe shift).
  \item all other crucial intervals are $\shij$-invariant, and
\end{itemize}
\item $J' = J \circ (j\leftrightarrow f(i))$.
\end{itemize}
(A \defn{trivially safe shift} is one for which {\em all}
the crucial intervals are $\shij$-invariant.)

We will compute the sweep $\swij \Pi_J$ of $\Pi_J$, and within that, the shift
$\shij \Pi_J$.

\begin{Lemma}\label{lem:safesweepcover}
  $J' \ \lessdot\ J$ is a covering relation in affine Bruhat order,
  i.e., $\Pi_J$ is a divisor in $\Pi_{J'}$.
\end{Lemma}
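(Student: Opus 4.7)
My plan is to verify directly the combinatorial description of a covering relation in affine Bruhat order recalled in \S\ref{ssec:positroid}. Writing $i_1 := i$ and $i_2 := J^{-1}(j)$ for the two rows at which $J$ and $J'$ differ, I need three things: (a) $J$ and $J'$ agree outside rows $i_1, i_2$; (b) the dots of $J$ at these rows form the appropriate NE/SW pair (so that the dots of $J'$ form the corresponding NW/SE pair); and (c) the open rectangle with corners $(i_1, J(i))$ and $(i_2, j)$ contains no dots of $J$.

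Parts (a) and (b) are straightforward. The transposition $(j \leftrightarrow f(i))$ affects only the rows where the values $j$ and $f(i) = J(i)$ appear, i.e., rows $i_1$ and $i_2$. From the hypothesis that $(i+1, j)$ is a Northeast corner of the diagram of $J$, I would extract the four inequalities $J(i+1) \leq j$, $J(i) > j$, $J^{-1}(j) \geq i+1$, and $J^{-1}(j+1) \leq i$. In particular $i_1 < i_2$ and $J(i_1) > j = J(i_2)$, confirming the NE/SW configuration in $J$.

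The main work lies in (c), and here I would argue by contradiction using the safe shift hypothesis. Suppose some dot $(a, b)$ of $J$ lies in the open rectangle, so that $i < a < J^{-1}(j)$ and $j < b < J(i)$. The key trick is to track the indicator function $\phi(b') := \bigl[J^{-1}(b') \geq i+1\bigr]$ as $b'$ ranges over $[b, J(i)]$: we have $\phi(b) = 1$ since $J^{-1}(b) = a \geq i+1$, while $\phi(J(i)) = 0$ since $J^{-1}(J(i)) = i$, so there exists $b^* \in [b, J(i)-1]$ with $\phi(b^*) = 1$ and $\phi(b^*+1) = 0$. A direct check of the four NE-corner conditions shows that $(i+1, b^*)$ is essential for $J$: in particular $J(i+1) \leq j \leq b^*$ and $J(i) > b^* $ handle the North and East neighbours. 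But the interval $[i+1, b^*]$ contains $j$ (because $b^* \geq b > j$) and does not contain $i$, while $(i+1, b^*) \neq (i+1, j)$ since $b^* > j$; this violates $\shij$-safety, contradicting the hypothesis that $(i+1, j)$ is the only unsafe crucial box.

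The main obstacle is the step of producing the witness $b^*$: it is what converts the ``no other unsafe essential box'' hypothesis into the geometric emptiness of the interior rectangle. Once the right $b^*$ is identified, verifying that $(i+1, b^*)$ is both essential and unsafe is a routine check of the four inequalities defining a Northeast corner.
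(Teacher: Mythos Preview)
Your argument is correct in structure and closely parallels the paper's own proof, but via a dual route. Both proofs handle (a) and (b) the same way, extracting the inequalities $J(i+1)\le j$, $J(i)>j$, $J^{-1}(j)\ge i+1$, $J^{-1}(j+1)\le i$ from the fact that $(i+1,j)$ is a Northeast corner of the diagram. For (c), the paper walks \emph{up column $j$} from the dot $(J^{-1}(j),j)$ to find an essential box $(r,j)$ with $r>i+1$; you instead walk \emph{right along row $i+1$} via your indicator $\phi$ to locate an essential box $(i+1,b^*)$ with $b^*>j$. Either offending box violates safety in the same way. Your four-condition check for $(i+1,b^*)$ is correct: conditions 2 and 4 come from $\phi(b^*)=1$, $\phi(b^*+1)=0$; condition 3 from $b^*\le J(i)-1$; condition 1 from $J(i+1)\le j<b\le b^*$.

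There is one gap you should close. The standing hypotheses of \S\ref{ssec:safeshifts} say that all \emph{crucial} intervals other than $[i+1,j]$ are $\shij$-invariant, whereas you only show that $(i+1,b^*)$ is \emph{essential}. The paper faces the identical issue for its box $(r,j)$ and dispatches it with a minimality argument: $r$ is chosen as the topmost essential box in column $j$, so if $[r,j]$ were not crucial it would contain a crucial $[r',j]$ with $r'>r$, giving a lower essential box and contradicting the choice of $r$. You can run the dual argument: among all $b'\in[b,J(i)-1]$ with $\phi(b')=1$ and $\phi(b'+1)=0$, take $b^*$ \emph{maximal}; then if $[i+1,b^*]$ were not crucial it would force a crucial interval $[i+1,b'']$ with $b''>b^*$, yielding a further-right essential box in row $i+1$ and contradicting maximality. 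Without this step, your contradiction (``the only unsafe crucial box'') does not quite fire, since an essential-but-not-crucial box would not violate the stated hypothesis.

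As a bonus, the paper's column-based argument actually proves slightly more than needed: it rules out dots in rows $(i,J^{-1}(j))$ in \emph{all} columns $>j$, not just those in the open rectangle. Your row-based argument proves exactly what is required for the covering relation and no more.
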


\begin{proof}
  Since $(i+1,j)$ is a crucial box, it is in the diagram, so not
  crossed out from above. Hence the dot in column $j$ of the affine
  permutation matrix is strictly below row $i$. Then we can construct
  the affine permutation of $J\circ (j\leftrightarrow f(i))$
  thusly: move $J$'s dot in column $j$ up to row $i$, and the old dot in
  row $i$ down to the now-empty row.

\begin{center}
%\raisebox{1.5in}
  {
    \begin{minipage}[t]{0.6\linewidth}
  Since $(i+1,j)$ is crucial, we know $(i,j)$ is not in the diagram, 
  and must be crossed out from the right (not from above, or else $(i+1,j)$
  would be crossed out too). Hence the dot moving down is to the right
  of the dot moving up, $i < f^{-1}(j) \leq j < f(i)$.
  Therefore $J\circ (j\leftrightarrow f(i)) < f$ in affine Bruhat order.
    \end{minipage} }
  \hfill
  \raisebox{-1.2in}
  {
    \begin{minipage}[t]{0.30\linewidth}
      {\epsfig{file=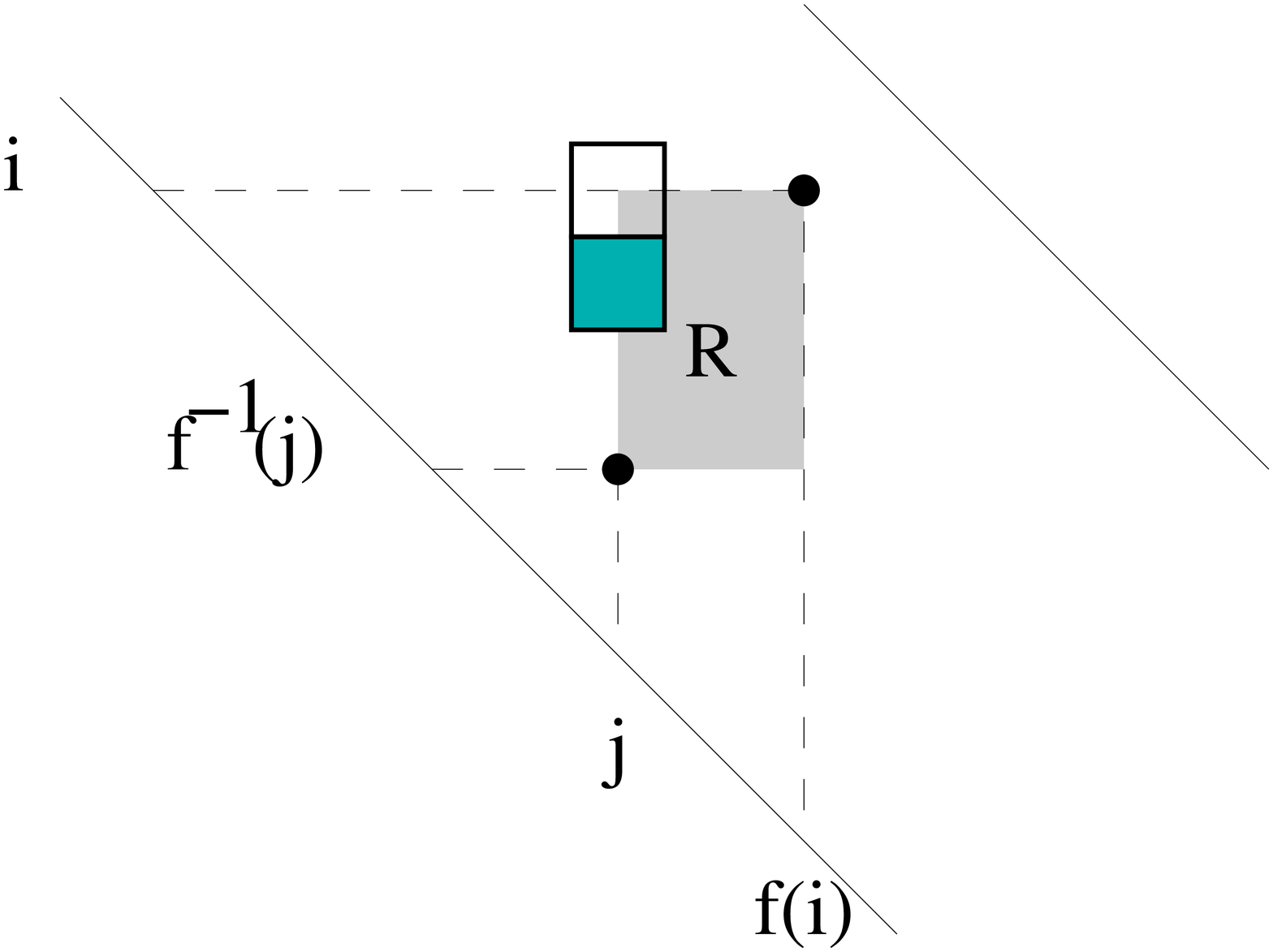,height=1.5in}}
    \end{minipage} }
\end{center}

  To show this is a covering relation, we need to show there are no other dots 
  in the rectangle $R$ within rows $(i,f^{-1}(j))$ and columns $(j,f(i))$.
  In fact we will show there are none to the right of $R$, either.

  Since $(i+1,j)$ is crucial, the box $(i+1,j+1)$ to its right
  is crossed out from above, so the entire second column of $R$ is crossed out.
  We learned before that the top left corner $(i,j)$ of $R$ is crossed out.
  But the bottom left corner $(f^{-1}(j),j)$ is not (since it contains a dot); 
  go up from it inside
  the diagram to find an essential box $(r,j)$. (It will automatically
  be crucial, otherwise $[r,j]$ would contain a crucial interval $[r',j]$
  with $r'>r$, but this would then give a lower essential box inside $R$
  and contradict the choice of $r$.)
  
  If there were dots to the right of $R$ in rows $[i+1,f^{-1}(j)-1]$,
  they would cross out more of $R$'s left column, and we would have
  $r>i+1$. The interval $[r,j]$ wouldn't be $\shij$-safe unless $r=j$.
  But then $[i+1,j]$ wouldn't be crucial; it would split as
  $[i+1,j-1] \coprod [j,j]$. 
\end{proof}

\begin{Proposition}\label{prop:safesweep}
  The safe sweep $\swij(\Pi_J)$ is again a positroid variety, $\Pi_{J'}$.
%  where $J'$ denotes $J \circ (i\leftrightarrow J^{-1}(i))$.

  Let $r$ be the rank bound on $[i+1,j]$ in the definition of $\Pi_J$. 
  Then $\Pi_J = \Pi_{J'} \cap X_{[i+1,j] \leq r}$ and
  $\shij(\Pi_J)\subseteq \Pi_{J'} \cap\ X_{[i,j-1] \leq r}$.
\end{Proposition}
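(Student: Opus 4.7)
The proposition bundles three claims, and I plan to establish them in the order (i) $\Pi_J = \Pi_{J'} \cap X_{[i+1,j]\leq r}$, (ii) $\swij\Pi_J = \Pi_{J'}$, (iii) $\shij\Pi_J \subseteq \Pi_{J'} \cap X_{[i,j-1]\leq r}$, with Proposition~\ref{prop:sweep} as the central engine. The first step will be a rank-matrix bookkeeping computation in the spirit of the proof of Lemma~\ref{lem:safesweepcover}: a routine count of ``dots weakly southwest of $(p,q)$'' shows that $r(J') - r(J)$ takes the value $1$ on the rectangle $R := [i+1,f^{-1}(j)]\times[j,f(i)-1]$ and vanishes elsewhere.

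For (i), the containment $\Pi_J \subseteq \Pi_{J'} \cap X_{[i+1,j]\leq r}$ is immediate from this comparison together with $r(J)_{i+1,j} = r$. For the reverse inclusion, note $\Pi_{J'} \not\subseteq X_{[i+1,j]\leq r}$ since $r(J')_{i+1,j} = r+1$; so the intersection is a proper closed subset of $\Pi_{J'}$, and as it contains the irreducible codimension-one subvariety $\Pi_J$ (Lemma~\ref{lem:safesweepcover}), it has codimension exactly one. By \cite{KLS} its components are positroid divisors in $\Pi_{J'}$, i.e., of the form $\Pi_K$ with $K$ covering $J'$ in affine Bruhat and $r(K)_{i+1,j}\leq r$. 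To finish I will enumerate these covers: the rank-decrease rectangle of any such swap must contain $(i+1,j)$, and the safe-shift hypothesis (that every crucial interval of $J$ other than $[i+1,j]$ is $\shji$-invariant) rules out all choices except $K=J$, since an alternative cover would introduce a crucial interval of $J$ meeting $R$ but distinct from $[i+1,j]$.

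For (ii), I apply Proposition~\ref{prop:sweep} with $Y = \Pi_{J'}$. Irreducibility of $\Pi_{J'}$ is standard and Lemma~\ref{lem:safesweepcover} supplies the divisor. Non-invariance of $\Pi_J$ under $\shij$ is immediate from Lemma~\ref{lem:shiftSback}: the crucial condition $r(M)_{i+1,j}\leq r$ backward-shifts to the distinct condition $r(M)_{i,j-1}\leq r$, and because $(i+1,j)$ is crucial the latter is not already implied. The delicate point is $\shij$-invariance of $\Pi_{J'}$: I will use that $\Pi_{J'}$ is cut out by the vanishing of Pl\"ucker coordinates and hence $T$-convex (Theorem~\ref{thm:Tconvex}(2)), so by Theorem~\ref{thm:Tconvex}(3) it suffices to verify that every essential box of $J'$ yields a $\shji$-invariant interval. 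The dot of $J'$ at $(i,j)$ crosses $(i+1,j)$ out of $J'$'s diagram, and a dot-by-dot comparison of the diagrams of $J$ and $J'$ should show that every remaining essential of $J'$ is essential for $J$, hence covered by the safe-shift hypothesis.

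Claim (iii) then drops out: $\shij\Pi_J \subseteq \swij\Pi_J = \Pi_{J'}$ by (ii), and Lemma~\ref{lem:shiftSback} applied to $\Pi_J \subseteq X_{[i+1,j]\leq r}$ gives $\shij\Pi_J \subseteq X_{[i,j-1]\leq r}$. The hard part throughout will be the diagram combinatorics used in both (i) and (ii): one must show that no ``new'' essential box of $J'$ appears inside $R$, since boxes in $R$ produce exactly the intervals $[p,q]$ with $p>i$ and $q\geq j$, i.e., the only intervals that fail to be $\shji$-invariant.
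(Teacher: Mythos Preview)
Your architecture matches the paper's: both hinge on Proposition~\ref{prop:sweep}, and both reduce the invariance/non-invariance checks to statements about which essential intervals of $J$ and $J'$ are $\shji$-invariant. But the execution diverges in (i), and there are two misdirections worth flagging.

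\textbf{Claim (i).} The paper does not enumerate covers of $J'$. Instead it compares the diagrams of $J$ and $J'$ inside the rectangle $R = [i,J^{-1}(j)]\times[j,J(i)]$ and shows directly that the only essential boxes of $J$ inside $R$ lie in row $i+1$, and among those only $(i+1,j)$ is crucial (else the safe-shift hypothesis fails). Hence $\Pi_J$ is defined by the conditions of $\Pi_{J'}$ plus the single condition at $(i+1,j)$, giving $\Pi_J = \Pi_{J'}\cap X_{[i+1,j]\le r}$ immediately. Your cover-enumeration route would work, but the step ``an alternative cover would introduce a crucial interval of $J$ meeting $R$'' is exactly the diagram analysis the paper carries out; you have not avoided it, only postponed it. You also have not argued that all components of $\Pi_{J'}\cap X_{[i+1,j]\le r}$ are divisors (Theorem~\ref{thm:monk} is stated for a different interval).

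\textbf{Invariance of $\Pi_{J'}$.} Invoking Theorem~\ref{thm:Tconvex}(3) is a wrong turn: that result only matches $T$-fixed points of $\shij X$ with the combinatorial shift, not the varieties themselves. The argument you actually need (and essentially sketch) is simpler: once each essential interval of $J'$ is $\shji$-invariant, each basic positroid variety $X_{[a,b]\le r_{ab}}$ in the intersection defining $\Pi_{J'}$ is literally stable under $\exp(te_{ij})$ for all $t$ (this is inside the proof of Lemma~\ref{lem:shiftSback}), hence so is $\Pi_{J'}$. Also, your claim that every essential box of $J'$ is already essential for $J$ is false: the paper shows the new essentials of $J'$ sit in row $i$ of $R$, and those intervals $[i,s]$ are $\shji$-invariant because they \emph{begin at $i$}, not because of the hypothesis on $J$.

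\textbf{Non-invariance of $\Pi_J$.} Your one-liner can be completed by computing $r(J)_{i,\,j-1}=r+1$ from the dot positions $J(i)>j$ and $J^{-1}(j)>i$ established in Lemma~\ref{lem:safesweepcover}; then $\Pi_J\not\subseteq X_{[i,j-1]\le r}$ while $\shij\Pi_J\subseteq X_{[i,j-1]\le r}$. The paper takes a different route: it shows that invariance would force $\Pi_J$ into $(\Pi_{J'}\cap X_{[i+1,j-1]\le r-1})\cup(\Pi_{J'}\cap X_{[i,j]\le r})$, each piece of codimension $>1$ in $\Pi_{J'}$, contradicting Lemma~\ref{lem:safesweepcover}.
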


\begin{proof}
  To apply proposition \ref{prop:sweep}, 
  we confirm that $\Pi_{J'}$ is $\shij$-invariant, and $\Pi_J$ isn't.

  Let $R$ again denote the rectangle with rows $[i,f^{-1}(j)]$ and columns 
  $[j,f(i)]$,
  and $R^\circ$ the subrectangle missing the outer rows and columns.
  We know the following about the diagrams of $J$ and $J'$ in $R$ and $R^\circ$:
  \begin{enumerate}
  \item The diagrams agree on $R^\circ$, 
    where (by the proof of lemma \ref{lem:safesweepcover}) they consist of
    entire columns of $R^\circ$.
  \item The $J$ diagram contains the column segment immediately to the left of 
    $R^\circ$, and not the one immediately to the right. 
    In $J'$ the opposite is true.
  \item The $J$ (resp. $J'$) diagram doesn't contain the row segment 
    immediately above (resp. below) $R^\circ$.
  \item The row segment below (resp. above) $R^\circ$ in the
    diagram of $J$ (resp. $J'$) is a continuation of the columns in $R^\circ$.
  \end{enumerate}

  The rank conditions of $J$ and $J'$ agree except on $R$ 
  (and do agree on the top row and right column).  By (4) above, the only
  possible essential conditions of $J'$ inside $R$ are on the top row,
  so on an interval $[i,s]$, and any such is $\shij$-invariant. 
  Hence $\Pi_{J'}$ is defined by $\shij$-invariant conditions,
  and is thus $\shij$-invariant.

  Similarly, the only possible essential boxes inside $R$ of $J$ 
  are in the second row, $(i+1,s)$, $s\geq j$. None of those with $s>j$ 
  can be crucial, or else $\shij$ would be unsafe for $\Pi_J$. 
  Hence $\Pi_J = \Pi_{J'} \cap X_{[i+1,j] \leq r}$ 
  where $r$ is the rank bound on $[i+1,j]$ in the definition of $\Pi_J$, and
  $$
  \shij \Pi_J \ =\ \shij (\Pi_{J'} \cap X_{[i+1,j] \leq r}) 
  \ \subseteq\ \shij \Pi_{J'} \cap \shij X_{[i+1,j] \leq r}
  \ =\ \Pi_{J'} \cap X_{[i,j-1] \leq r}.
  $$
  by proposition \ref{prop:shiftcupcap} and lemma \ref{lem:shiftSback}. 

  If $\Pi_J$ were $\shij$-invariant, then we would have 
  \begin{eqnarray*}
    \Pi_J &\subseteq& \Pi_{J'} \cap X_{[i+1,j] \leq r} \cap X_{[i,j-1] \leq r} 
    \ =\ \Pi_{J'} \cap (X_{[i+1,j-1]\leq r-1} \cup X_{[i,j]\leq r}) \\
    &=& (\Pi_{J'} \cap X_{[i+1,j-1]\leq r-1}) \cup (\Pi_{J'} \cap X_{[i,j]\leq r})
    \qquad\qquad\text{as sets}
  \end{eqnarray*}
  but each of those intersections has codimension $>1$ inside $\Pi_{J'}$.
\end{proof}

In principle, to compute the shift we could use theorem \ref{thm:shiftverify}.
But we can give a more efficient calculation using our knowledge of
the poset of positroid varieties. First we study the upper bound
provided in proposition \ref{prop:safesweep}.

\begin{Proposition}\label{prop:safeshiftupperbound}
  Let $j'$ run over the columns of dots in $J'$
  %in columns $\geq i$ 
  that are minimally Northwest of $(i,J(j))$. 
  Each $j' \geq i$ gives a component $\Pi_{J'\circ (j\leftrightarrow j')}$ 
  of $\Pi_{J'} \ \cap\ X_{[i,j-1] \leq r}$, 
  each of codimension $1$ in $\Pi_{J'}$,
  and these are all the components.

  Also, $[\Pi_J] = \sum_{j'} [\Pi_{J' \circ (j\leftrightarrow j')}]$
  as elements of $H^*(\Grkn)$.
\end{Proposition}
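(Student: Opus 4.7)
The plan is to derive the decomposition directly from the Monk formula for positroid varieties (Theorem \ref{thm:monk}), applied to $\Pi_{J'}$ at its row-$i$ dot. Recall that $J'(i) = j$ by the definition $J' = J \circ (j \leftrightarrow f(i))$, so $(i, j)$ is a dot of $J'$.

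First, I would match the rank bounds. The Monk formula applied at the dot $(i, j)$ of $J'$ yields the intersection $\Pi_{J'} \cap X_{[i, j-1] < r_M}$, where
$$ r_M := |[i,j-1]| - \#\{\text{dots of } J' \text{ weakly SW of } (i,j-1)\}. $$
Comparing with $r = |[i+1, j]| - \#\{\text{dots of } J \text{ weakly SW of }(i+1, j)\}$ and noting that $J$ and $J'$ differ only by swapping the dots $(i, f(i)), (f^{-1}(j), j)$ of $J$ with $(i, j), (f^{-1}(j), f(i))$ of $J'$ (with $f(i) > j$ by Lemma \ref{lem:safesweepcover}), a direct bookkeeping yields $r_M = r+1$. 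Hence $X_{[i,j-1] < r_M} = X_{[i,j-1] \leq r}$, and Theorem \ref{thm:monk} gives
$$ \Pi_{J'} \cap X_{[i,j-1] \leq r} = \Union_{j' \in C} \Pi_{J' \circ (j \leftrightarrow j')}, $$
where $C$ is the set of columns of dots in $J'$ minimally NW of $(i,j)$ in the affine sense. Each covering relation $J' \gtrdot J' \circ (j \leftrightarrow j')$ makes the corresponding summand codimension $1$ in $\Pi_{J'}$.

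Next, I would verify that every $j' \in C$ satisfies $j' \geq i$. In the affine periodic picture, any candidate dot with column $j' < i$ would sit in an earlier period, and the route from $(i, j)$ NW to such a dot must traverse the NW/SE-arranged ``second triangle'' dots of $J'$ coming from the safe-shift hypothesis; these dots block any such $j' < i$ from being minimally NW. Thus every element of $C$ satisfies $j' \geq i$, as asserted.

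For the class identity, Proposition \ref{prop:safesweep} combined with Lemma \ref{lem:safesweepcover} shows that $\Pi_J = \Pi_{J'} \cap X_{[i+1,j] \leq r}$ is a proper codimension-$1$ intersection, so $[\Pi_J] = [\Pi_{J'}] \cdot [X_{[i+1, j] \leq r}]$ in $H^*(\Grkn)$. Since $X_{[i+1,j]\leq r}$ and $X_{[i,j-1]\leq r}$ are $T$-translates, they represent the same ordinary cohomology class, so
$$ [\Pi_J] = [\Pi_{J'}] \cdot [X_{[i,j-1] \leq r}] = [\Pi_{J'} \cap X_{[i,j-1] \leq r}] = \sum_{j' \in C} [\Pi_{J' \circ (j \leftrightarrow j')}], $$
again using properness and the reducedness guaranteed by Monk. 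The main obstacle I anticipate is cleanly establishing the $j' \geq i$ restriction via the interplay between the affine periodic structure of $J'$ and the NW/SE ``second triangle'' arrangement forced by the safe-shift hypothesis; the rest of the argument is a direct application of the Monk formula and standard intersection theory.
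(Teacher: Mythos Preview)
Your first paragraph matches the paper exactly: both invoke Theorem \ref{thm:monk} for $J'$ at its row-$i$ dot $(i,j)$, and your bookkeeping $r_M=r+1$ makes explicit what the paper leaves to the reader.

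For the class identity your route diverges from the paper's and has a real gap. You assert $[\Pi_J]=[\Pi_{J'}]\cdot[X_{[i+1,j]\le r}]$ on the grounds that $\Pi_J=\Pi_{J'}\cap X_{[i+1,j]\le r}$ is codimension $1$ in $\Pi_{J'}$. But the cup-product formula $[A\cap B]=[A]\cdot[B]$ in $H^*(\Grkn)$ requires the intersection to be \emph{dimensionally transverse in the ambient Grassmannian}: one needs $\codim X_{[i+1,j]\le r}=1$, not merely that the intersection drops dimension by $1$ inside $\Pi_{J'}$. In general $X_{[i+1,j]\le r}$ is not a divisor in $\Grkn$ (its codimension is a determinantal count in $k$, $j-i$, $r$), so $[\Pi_{J'}]\cdot[X_{[i+1,j]\le r}]$ sits in the wrong cohomological degree and cannot equal $[\Pi_J]$. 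The same excess-intersection problem blocks the companion step $[\Pi_{J'}]\cdot[X_{[i,j-1]\le r}]=[\Pi_{J'}\cap X_{[i,j-1]\le r}]$. (A minor terminological slip: the two $X$'s are related by a coordinate permutation, not by $T$; your conclusion that they are cohomologous is still correct via connectivity of $GL(n)$.)

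The paper avoids this entirely: it represents $[\Pi_J]$ and each $[\Pi_{J'\circ(j\leftrightarrow j')}]$ by affine Stanley symmetric functions via \cite{KLS}, and then applies the Lam--Shimozono transition formula \cite{LSLittle}, which for $v=J'$ collapses on one side to the single term $F_J$ precisely because of the safe-shift hypothesis. The paper even remarks parenthetically that a more geometric argument (through Theorem \ref{thm:shiftverify} and Proposition \ref{prop:matchings}) should exist; your attempt is in that spirit, but the product-class step as written does not go through without controlling the excess intersection.
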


\begin{proof}
  The first is a direct application of theorem \ref{thm:monk} (whose $J$ 
  is our $J'$). 

  For the second, we use theorem 7.1 of \cite{KLS} to assert that the
  cohomology classes of positroid varieties are representable using
  affine Stanley symmetric functions of their affine permutations.

  These functions enjoy a ``transition formula'' \cite[theorem 7]{LSLittle}
  $$ \sum_{w \gtrdot v,\ w = v\circ (r\leftrightarrow s),\ r>s} F_w 
  = \sum_{u \gtrdot v,\ u = v\circ (r\leftrightarrow s),\ r<s} F_u. $$
  When $v = J'$, the safeness assumptions ensure that 
  the left sum is just $F_J$.

  (It seems likely that there should be a more geometric proof,
  using theorem \ref{thm:shiftverify}, perhaps using 
  proposition \ref{prop:matchings} to find the required points
  separating the components.)
\end{proof}

\begin{Theorem}\label{thm:safeshift}
  Under the assumptions from the beginning of \S \ref{ssec:safeshifts},
  $$ \shij(\Pi_J) \ = \  \Union_{i'} \Pi_{J'\circ (j\leftrightarrow j'} $$
  where $j'$ runs over the columns of dots in $J\circ (i\leftrightarrow j)$
  that are minimally Northwest of $(i,J(j))$ and in columns $\geq i$.

  If $S = (j_1',j_2',\ldots,j_m')$ is a sublist of these $\{j'\}$, 
  with dots ordered Northeast/Southwest, then
  $$ \bigcap_{i'\in S}  \Pi_{J'\circ (j\leftrightarrow j')} 
  = \Pi_{J\circ (i\leftrightarrow j)\circ 
    (i_1' \mapsto i_2' \mapsto \cdots \mapsto i_m' \mapsto i \mapsto i_1')}. $$
  In particular, as $K_T$-classes,
  $$ \big[\shij(\Pi_J)\big] \ = \ \sum_{S \neq \emptyset} (-1)^{|S|-1}
  \big[ \Pi_{J\circ (i\leftrightarrow j)\circ 
(i_1' \mapsto i_2' \mapsto \cdots \mapsto i_m' \mapsto i \mapsto i_1')} \big].$$
\end{Theorem}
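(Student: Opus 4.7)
The plan is to bootstrap from the upper bound established in Proposition \ref{prop:safesweep}, which gives $\shij(\Pi_J) \subseteq \Pi_{J'} \cap X_{[i,j-1]\leq r}$. By Proposition \ref{prop:safeshiftupperbound}, the right-hand side decomposes as the reduced union $\Union_{j'} \Pi_{J' \circ (j\leftrightarrow j')}$, and each component has codimension $1$ in $\Pi_{J'}$, hence the same dimension as $\Pi_J$ itself. This gives the set-theoretic containment $\shij(\Pi_J) \subseteq \Union_{j'} \Pi_{J' \circ (j\leftrightarrow j')}$.

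For the reverse containment, I would use Proposition \ref{prop:KTshift} which gives $[\Pi_J] = [\shij \Pi_J]$ in $H^*(\Grkn)$, together with the cohomological identity $[\Pi_J] = \sum_{j'} [\Pi_{J'\circ (j\leftrightarrow j')}]$ from Proposition \ref{prop:safeshiftupperbound}. As in the proof of Theorem \ref{thm:shiftverify}, Zariski's Main Theorem applied to the flat family $\wt\swij \Pi_J \to \PP^1$ guarantees that $\shij \Pi_J$ is equidimensional of dimension $\dim \Pi_J$; therefore it must set-theoretically be the union of some subset of the $\Pi_{J' \circ (j\leftrightarrow j')}$. If any component were missing, its class would be missing from the sum, contradicting the cohomological equality (positroid Schubert classes being linearly independent in $H^*$). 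Hence all components appear. Scheme-theoretic equality then follows because intersections and unions of positroid varieties are reduced \cite{KLS}.

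The intersection formula is the combinatorial heart of the theorem. Given a sublist $S = (j_1',\ldots,j_m')$ ordered NE/SW, the intersection $\bigcap_{j_k'\in S} \Pi_{J'\circ (j\leftrightarrow j_k')}$ is a (reduced) intersection of positroid varieties, hence a reduced union of positroid varieties by \cite[cor.\ 4.4]{KLS2}. I would identify it by matching rank conditions: each factor $\Pi_{J' \circ (j\leftrightarrow j_k')}$ imposes one extra rank drop on a rectangle anchored between $(i,J(j_k'))$ and the previously-moved dot, and imposing all such conditions simultaneously forces the $(m+1)$-element cyclic rearrangement $(i_1' \to i_2' \to \cdots \to i_m' \to i \to i_1')$ of the row assignments. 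One then checks (i) the $\supseteq$ containment by exhibiting a chain of covering relations in affine Bruhat order from $J\circ(i\leftrightarrow j)$ composed with this cycle up to each of the $\Pi_{J' \circ (j\leftrightarrow j_k')}$ (so the smaller positroid variety lies in each factor), and (ii) the $\subseteq$ containment by comparing dimensions: the cyclic permutation raises the length by exactly $m$, matching the expected codimension of the intersection inside $\Pi_{J'}$. The main obstacle will be verifying this dimension/length count and ruling out unwanted extra components in the intersection, which amounts to showing that the crucial rank conditions of the candidate positroid variety already imply all the component rank conditions.

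Finally, the $K_T$-class identity follows by Mayer--Vietoris-style inclusion-exclusion. Since positroid varieties have rational singularities and arbitrary intersections of them are reduced with rational singularities \cite{KLS}, the Koszul-type complex expressing $[\Union_{j'} \Pi_{J'\circ(j\leftrightarrow j')}]$ as the alternating sum $\sum_{\emptyset \neq S} (-1)^{|S|-1}[\bigcap_{j'\in S}\Pi_{J'\circ(j\leftrightarrow j')}]$ has no higher cohomology, so the identity holds in $K_T$. Combining with $[\Pi_J] = [\shij \Pi_J]$ from the $K$-part of Proposition \ref{prop:KTshift} and with the intersection formula just established yields the stated formula.
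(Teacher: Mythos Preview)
Your argument for the equality $\shij(\Pi_J) = \bigcup_{j'} \Pi_{J'\circ(j\leftrightarrow j')}$ matches the paper's exactly: the upper bound from Propositions~\ref{prop:safesweep} and~\ref{prop:safeshiftupperbound}, equidimensionality of the flat limit, and then the cohomological identity $[\Pi_J]=\sum_{j'}[\Pi_{J'\circ(j\leftrightarrow j')}]$ to rule out missing components. (A small quibble: positroid classes are not linearly independent in $H^*$ in general; what you actually need is that each component contributes a nonzero effective class, so dropping one makes the total strictly smaller. The paper is equally terse here, simply saying ``the homology classes would not match.'') The $K_T$ inclusion--exclusion step is likewise the same, phrased in the paper as ``trivial M\"obius inversion on the boolean lattice.''

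The one genuine difference is in the intersection formula. You propose a direct attack via rank conditions, Bruhat chains, and a dimension/length count, and you correctly flag the bookkeeping (``ruling out unwanted extra components'') as the main obstacle. The paper sidesteps this entirely by recognizing the configuration as an inflated instance of a standard fact about Schubert varieties inside a Coxeter Schubert variety: for a Coxeter word $r_1\cdots r_{|C|}$ one has
\[
\bigcap_{i\in S} X^{r_1\cdots\widehat{r_i}\cdots r_{|C|}} \;=\; X^{\prod_{i\notin S} r_i}.
\]
This reduces the intersection computation to known Bruhat combinatorics in one stroke and avoids the case analysis you anticipate. Your approach would also go through, but the Coxeter-element observation is cleaner and explains structurally why the answer is governed by a boolean lattice.
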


\begin{proof}
  The containment $\subseteq$ comes from propositions \ref{prop:safesweep}
  and \ref{prop:safeshiftupperbound}. 
  As in theorem \ref{thm:shiftverify}, since the shift is equidimensional, 
  it must set-theoretically be a union of some of these components.
  But if some components were missing, then the homology classes would
  not match as in proposition \ref{prop:safeshiftupperbound}.

  The computation of $\bigcap_{i'\in S} \Pi_{J'\circ (j\leftrightarrow j')}$
  is essentially an inflated version of the following one:
  if $r_1 r_2 \cdots r_{|C|}$ is a Coxeter element, 
  and $S \subseteq \{1,\ldots,|C|\}$, then the Schubert varieties
  inside the Coxeter Schubert variety $X^{r_1 r_2 \cdots r_{|C|}}$ satisfy
  $$ \bigcap_{i \in S} X^{r_1 r_2 \cdots \widehat{r_i} \cdots r_{|C|}}
  = X^{\prod_{i \notin S} r_i}. $$
  That scheme-theoretic statement, 
  plus the trivial M\"obius inversion on the boolean lattice of
  subsets $S \subseteq C$, give the $K_T$-class formula.
\end{proof}

This and proposition \ref{prop:KTshift} suffice to prove
theorem \ref{thm:safeshiftintro}, which was less specific in
not making precise the components.

\section{The main theorems:
  IP pipe dreams as a record of shifting}\label{sec:main}

\subsection{The partial permutation matrix associated to a viable slice}
\label{ssec:viable}

Let $s$ be a slice at $(i,j)$, as pictured back in figure \ref{fig:slice}
from \S \ref{ssec:main}.
We will attempt to associate a partial permutation to $s$, and if 
we are successful we will call $s$ ``viable''. 

Call the area of the upper triangle that is above $s$, the \defn{top half},
and the remainder the \defn{bottom half}. 
Call the $(i,j)$ box the \defn{kink} in $s$.
Draw \defn{rays} (as in figure \ref{fig:slicedots})
perpendicular to the edges of $s$, as follows:
\begin{itemize}
\item The $0$ edges have rays pointing South or East, so {\em out}
  of the top half.
\item All other edges have rays pointing North or West, so typically
  {\em into} the top half.
\end{itemize}
(If $i=1$ some of the slice edges are on the top line, 
from which neither North nor South rays go into the top half.)

The rays are labeled with their edge label, much like the pipes are
in the pipe dreams.

\subsubsection{In the absence of $K$-labels}

For each letter label, say $A$, consider the vertical rays (going North
from an $\hlabel A$) 
and horizontal rays (going West from an $\vlabel A$) that are labeled $A$.

In the absence of $K$-tiles, we say that $s$ is \defn{viable} if
\begin{itemize}
\item for each letter label $A$, 
  the number of $\hlabel A$ and $\vlabel A$ edges in $s$ agree, %and
\item for each $i$ up to that number, the $i$th $\hlabel A$ from the left 
  occur further South (and of course West) 
  than the $i$th $\vlabel A$ from the top, and
\item if there is a $\vlabel 1$ (necessarily on the East edge of the kink),
  there should also be a $\hlabelone$ on the bottom edge of the slice.
\end{itemize}

When these hold, we can place \defn{the $i$th $A$ dot} where 
the ray up from the $i$th $\hlabel A$
and the ray left from the $i$th $\vlabel A$ meet.
If there is a $\vlabel 1$, make its West-pointing ray meet the {\em rightmost}
ray up from a $\hlabelone$ to make the \defn{$1$ dot}. 
If we terminate the rays at those dots, as in figure \ref{fig:slicedots},
then no two rays with the same label cross.

There are also some \defn{$0$ dots} in the bottom half. Put a $0$ ray pointing
East through the triangle, in every row after the $i$th. If there are
$m$ South-pointing rays from $\hlabelzero$ edges, have them terminate
where they cross the top $m$ East-pointing $0$ rays (including one
from the East edge of the kink, if it is a $\vlabel 0$).
An example is in figure \ref{fig:slicedots}.

\begin{figure}[htbp]
  \centering
  \epsfig{file=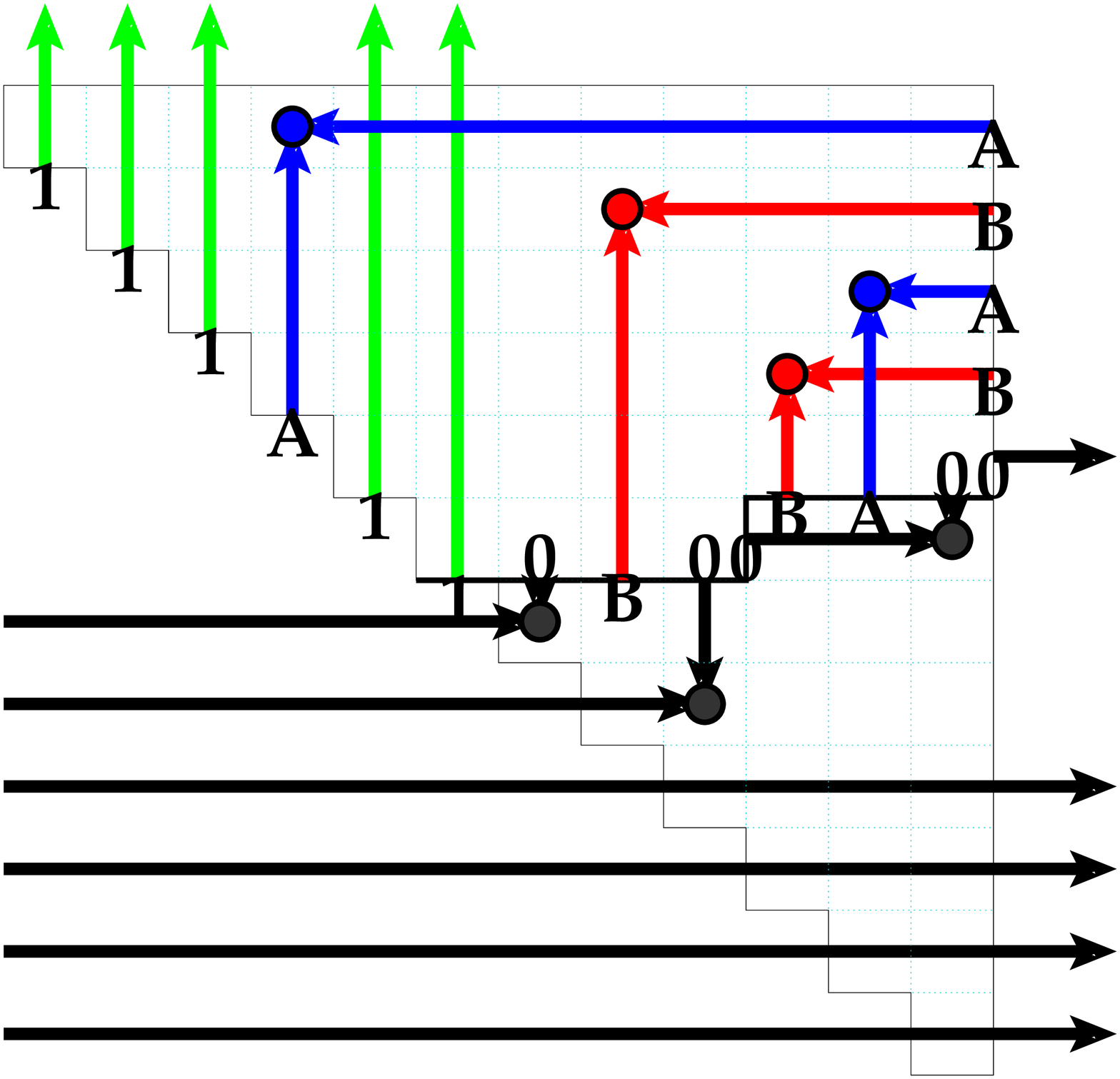,height=3.5in}
  \caption{The rays and dots of a slice. This example has no $1$-dot,
    because the East edge of the kink is not $\vlabel 1$.}
  \label{fig:slicedots}
\end{figure}

So in the absence of $K$-labels, the above arrangement of dots is
our definition of \defn{the partial permutation associated to the slice $s$}.
We will denote this $g(s)$, later to avoid confusion with the $f(P)$
associated to an IP pipe dream.

If one extends $f$ to a bounded juggling pattern $J$,
by placing dots Northwest/Southeast in the missing rows and columns,
then the $0$- and $1$-rays that continue outside the triangle can be 
imagined as pointing at these other dots.

\subsubsection{With $K$-labels}\label{sssec:wKlabels}
So far the rays described have only one label, and two rays either
cross unimpeded if they have different labels, or mutually annihilate
(leaving a dot) if they have the same label.

What changes now is that there is one slice edge that can have more
than one label: the East edge of the kink, labeled $V\not\ni 0$. 
We give this West-pointing ray the special property that if it crosses
a North-pointing ray with (just one) label $c$, then
\begin{itemize}
\item if $c$ is not in $V$, and $V$ doesn't end with $1$, the 
  rays cross unimpeded
\item if $c$ is not in $V$, and $V$ ends with $1$, $c$ must be $0$ and the 
  rays cross unimpeded
\item if $V$ ends with $c$, then the rays cross through each other
  {\em but both change} as depicted in the displacer tile; in particular
  the West-pointing ray loses its terminal letter ($c$)
\item if $c$ is in $V$, but is not its last letter, 
  then $s$ \defn{is not viable}.
\end{itemize}
To check viability, then, we continue this West-pointing ray, 
successively losing its terminal letters where it doesn't cross 
disjointly-labeled rays, until it gets down to a single label. 
At that point we have reduced to the previous definition of viability.

%An alternate way to characterize $s$ being viable is that one should
%be able to construct a diagram like that in figure \ref{fig:slicedots},
%in which the required rays from $s$'s edges terminate at

\begin{Proposition}\label{prop:safe00essential}
  Let $s$ be a viable slice, with kink at $(i,j)$, 
  and $f$ its associated partial permutation.
  Then $\shij$ is a safe shift for $\Pi_f$.

  The East and South edges of the kink are both labeled $0$
  iff $(i+1,j)$ is an essential box for $f$.
  Otherwise, $\Pi_f$ is $\shij$-invariant.
\end{Proposition}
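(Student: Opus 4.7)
The plan is to prove the proposition in three pieces: (a) safeness of $\shij$ for $\Pi_f$, (b) the iff-characterization of when $(i+1,j)$ is essential, and (c) $\shij$-invariance of $\Pi_f$ when $(i+1,j)$ is not essential. Piece (c) will follow immediately from (a), (b), and theorem \ref{thm:safeshift}: once we know the shift is safe and that $(i+1,j)$ is not essential, that theorem gives $\shij \Pi_f = \swij \Pi_f = \Pi_f$ directly. So the real work is in (a) and (b), and my strategy is to exploit the ray-and-dot recipe for $g(s)$ to locate every dot of $f$ that could lie in the ``unsafe region'' $U := \{(i',j') : i'\geq i+1,\ j'\geq j\}$ and then inspect the diagram of $f$ on $U$.

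First I would localize where dots of $f$ can appear in $U$. The top half of $s$ meets the closed rectangle $\{(i',j') : i'\geq i,\ j'\geq j\}$ only at the kink box $(i,j)$: for columns $j'>j$ the slice runs along the boundary between rows $i-1$ and $i$, and in column $j$ it drops to the boundary below row $i$. Consequently no letter dot of $g(s)$ lies in $U$, and only $0$-dots (together with the $1$-dot, if present) are candidates. By the matching rule, the leftmost South $0$-ray pairs with the topmost East $0$-ray and so on, so the $0$-dots form a NW/SE antichain. When $g(s)$ is extended to $J(f)$ by placing dots NW/SE in the missing rows and columns of the second triangle, this antichain extends to a single NW/SE ``staircase'' of dots in $U$; the $1$-dot, if present, fits into the same NW/SE order at the rightmost relevant column.

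Next, for piece (a), I would show that along such a NW/SE staircase the only Northeast corner of the diagram that can sit in $U$ is $(i+1,j)$: any essential box $(i',j')\in U$ with $i'>i+1$ or $j'>j$ must have the box immediately above or immediately to its right also in the diagram, by inspecting which dots of the staircase could cross it out. This geometric observation is the content of the safeness claim. Then for piece (b) I would do a case analysis on the two labels of the kink. If the South edge is $\hlabelzero$ and the East edge is $\vlabel 0$, the matching places a $0$-dot at $(i+1,j)$ itself (the kink's South $0$-ray lands on the East $0$-ray one row down), and the kink's East $0$-ray prevents any dot in row $i$ east of column $j$, so the boxes $(i,j)$ and $(i+1,j+1)$ are forced out of the diagram and $(i+1,j)$ is essential. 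In any other case the kink contributes either a letter/$1$ ray into the top half (which by the order-preserving letter matching produces a dot at some $(i,c)$ with $c\leq j$, killing the NE-corner condition on the $(i,j)$ side) or it kills the East $0$-ray so that the $0$-dot produced by the South $0$-ray at column $j$ lands strictly below row $i+1$ (killing the NE-corner condition on the $(i+1,j+1)$ side), or both; in every sub-case one of the NE-corner conditions at $(i+1,j)$ fails.

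The main obstacle will be the case analysis in piece (b), because letter rays can travel arbitrarily far through the top half before terminating. The organizing principle that keeps the analysis finite is that the order-preserving matching of $A$-rays forces the kink's $\vlabel A$ (when present) to be the bottommost vertical $A$-ray, hence it pairs with the rightmost $\hlabel A$, which must sit strictly West of column $j$ because the kink's own South edge is not $\hlabel A$ in this sub-case. Pinning down the column of the resulting dot in this way, and tracking the analogous statements for the $1$-dot and for the $\hlabelone$ case, reduces each sub-case to a direct check on the four neighboring boxes of $(i+1,j)$.
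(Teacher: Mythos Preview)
Your overall strategy matches the paper's: both arguments use the fact that in the bottom half only $0$-dots (and possibly the $1$-dot) can appear and that these sit NW/SE, and both use this to pin down any essential box in the unsafe region. The paper proceeds just as you outline, first forcing $a=i+1$ (using that a $0$-dot in row $a-1$ lies further West than the one in row $a$, so $(a-1,b)$ would remain in the diagram), then forcing $b=j$ (using that the East $0$-ray from the kink must reach past $(i,b)$, but a $\hlabelzero$ atop $(i,b)$ would stop it there).

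However, your claim that ``the matching places a $0$-dot at $(i+1,j)$ itself'' is false in general. If there are $\hlabelzero$ edges on the slice in columns strictly between $i$ and $j$, those South $0$-rays sit to the left of the column-$j$ ray and are matched with the higher East rays first; the column-$j$ $0$-dot then lands strictly below row $i+1$. The paper never asserts a dot at $(i+1,j)$; it argues instead that $(i+1,j)$ lies in the diagram (not crossed out from North or East) while its neighbors $(i,j)$ and $(i+1,j+1)$ are crossed out. Relatedly, your assertion that ``the kink's East $0$-ray prevents any dot in row $i$ east of column $j$'' is backwards: that East ray can in fact terminate at a $0$-dot in row $i$ at some column $>j$, and this dot is exactly what crosses $(i,j)$ out from the East. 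Your desired conclusion (that $(i,j)$ is out of the diagram) still holds, but for the opposite reason. Finally, for piece (c), theorem~\ref{thm:safeshift} is stated under the hypothesis that $(i+1,j)$ \emph{is} crucial, so it does not cover the complementary case; the invariance follows instead directly from (a), since then every essential interval is $\shji$-invariant and lemma~\ref{lem:shiftSback} applies.
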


\begin{proof}
  The $i=j$ case is silly and we dispense with it first. 
  Of course the shift is safe, $\Pi_f$ is $\sh_{i\to i}$-invariant, 
  and the South edge is labeled $1$, not $0$.

  Now, let $(a,b)$ be an essential box in $f$'s diagram, 
  with $i < a \leq j \leq b$; we need to show that $(a,b) = (i+1,j)$.  

  Since $a\geq i+1$, the box $(a,b)$ is in the ``bottom half'', where
  there are only $0$-dots.
  Since $(a,b)$ is not crossed out from the East
  (meaning, in $f$'s diagram -- not by a ray, in this context!) 
  there is a $0$-dot to its West.
  Assume for contradiction that $a > i+1$.
  Then there is also a $0$-dot in the row above, further West.
  Since $(a,b)$ is not crossed out from above, 
  neither is the box $(a-1,b)$ above it. 
  So if $a>i+1$, then $(a-1,b)$ is not crossed out at all, 
  making $(a,b)$ inessential. Contradiction; hence $a=i+1$.

  Since $(a=i+1,b)$ is not crossed out from above, neither is $(i,b)$,
  so it must be crossed out from the East (or else $(i+1,b)$ wouldn't
  be essential). Hence the East edge of the kink must be $\vlabel 0$,
  and the ray coming East out of this $\vlabel 0$ must pass all the
  way through the $(i,b)$ box.

  If $b>j$, then since $(i,b)$ is not crossed out from above, the
  slice label atop the $(i,b)$ square must be $\hlabelzero$. So the
  ray just mentioned would stop in the $(i,b)$ square, 
  not pass through, contradiction. 
  Hence $j=b$, concluding the proof that the shift is safe.

  Say $(i+1,j)$ is essential. Then since it is not
  crossed out from above, the slice label South of the kink
  must be $\hlabelzero$. Since the box above must be crossed out from the East,
  the slice label East of the kink must also be $\vlabel 0$. 

  Now the converse.
  If the East edge of the kink is $\vlabel 0$, then $(i,j)$ is not in
  the diagram (either the row's dot is to the right, or there is no dot). 
  If the South edge of the kink is $\hlabelzero$, 
  then $(i+1,j)$ is not crossed out from the North or East, and
  is in the diagram. For $(i+1,j)$ to be essential, though, we still need
  that $(i+1,j+1)$ is crossed out, and there are two cases to consider. 
  If the slice label above $(i,j+1)$ is $\hlabelzero$, then there is a
  $0$-dot at $(i,j+1)$. Otherwise there is a ray upward from this 
  slice label, and $(i,j+1)$ is crossed out from above. Either way
  $(i+1,j+1)$ is crossed out from above. This shows that if both labels
  are $0$, then $(i+1,j)$ is an essential box for $f$.
\end{proof}

\subsection{The West edges are automatically $0$s}

\begin{Proposition}\label{prop:west0s}
  Let $P$ satisfy all the requirements of a $K$-IP pipe dream
  except for the condition that the West edge labels are $0$s.
  Then this condition holds iff the number of letters on the
  South edges equals that on the East edges.  
\end{Proposition}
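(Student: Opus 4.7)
The plan is to trace each lettered pipe through the allowed tile types and show it has exactly one endpoint on the East boundary, with its other endpoint necessarily on the South or West boundary. Since the North boundary is restricted to $0$s and $1$s, no lettered pipe can have an endpoint there; combined with the forced endpoint pairing, this will give the conservation law $\ell_E=\ell_S+\ell_W$ (where $\ell_X$ denotes the number of letter labels on side $X$), from which the stated equivalence is immediate.

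Concretely, I would first analyze a letter-$A$ pipe entering some tile from the South in the IP tile setting. The fact that a tile is uniquely determined by its $(S,E)$ labels (as long as these are not both $0$) forces: if $E\neq A$ the tile is a crossing and the $A$-pipe exits North, while if $E=A$ the tile is an elbow and the pipe exits East. Propagating this, the pipe moves up through crossings and must at some row bend East, since a continued northward path would eventually exit through the North boundary, where letters are forbidden. Once heading East, the only tile consistent with $W=A$ is a crossing with $E=A$, so the pipe continues rightward through every subsequent tile and exits on the East boundary.

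The analogous analysis starting from the West boundary (where $W=A$ immediately forces a crossing with $E=A$, so the pipe travels straight East to the East boundary) and starting from the East boundary (where the pipe moves West through crossings and then either exits on the West boundary or bends South at a matching elbow and descends through crossings to the South boundary) exhausts the possibilities. In every case the endpoint pair is either South--East or West--East, giving $\ell_E=\ell_S+\ell_W$ and hence $\ell_W=0$ iff $\ell_S=\ell_E$.

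The main obstacle will be extending this pipe-tracing argument to the full $K$-IP setting, where fusor and displacer tiles allow vertical edges to carry multi-letter words and allow the effective label of a pipe to change along its length. I would handle this by counting letters in boundary labels with multiplicity (letters inside words count separately) and verifying tile-by-tile that each of the four $K$-tile types contributes an even number of occurrences of any fixed letter $A$ to the union of its four edges, so that the same boundary conservation $\ell_E=\ell_S+\ell_W$ survives. The displacer is the delicate case, since a word and a transverse pipe interact nontrivially there, but its definition (a letter at the tail of the word gets exchanged with the crossing pipe) ensures that each letter appearing on one edge of the tile reappears on exactly one other edge.
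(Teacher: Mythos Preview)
Your approach is close in spirit to the paper's: both are pipe-counting arguments, but you trace the lettered pipes while the paper traces the $0$- and $1$-pipes. The paper first shows that no West boundary edge can carry a $1$ (a tile whose West label contains $1$ cannot be a displacer, and the remaining tile types then force its South label to be $0$, impossible at a diagonal position $(i,i)$); it then shows the $1$-pipes run South-to-North, so $S_1=N_1$, and finally counts $0$-pipes arriving on the West side to get exactly $n-E_L+S_L$ of them.

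There is a genuine gap in your argument. Your conservation law $\ell_E=\ell_S+\ell_W$ concerns only \emph{letters}; even if it holds and $\ell_S=\ell_E$, you conclude only that no West edge carries a letter. In the $K$-setting a vertical edge may still be labeled by the one-symbol word ``$1$'', and nothing in your argument excludes this. So the implication ``$\ell_S=\ell_E \Rightarrow$ every West edge is $0$'' is not established. You need a separate step ruling out $1$'s on the West boundary, exactly the step the paper does first.

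Your $K$-extension is also too weak as written. A tile-by-tile parity check (each tile contributes an even number of occurrences of a fixed letter $A$) would only yield that $\ell_N+\ell_S+\ell_E+\ell_W$ is even, not the signed identity $\ell_E=\ell_S+\ell_W$. Worse, the parity claim itself fails for fusor tiles: a letter sitting in the interior of the West-edge word (not its final symbol) does not reappear on any other edge of that tile, so it is counted once. To make the $K$-case work you would need a genuinely directed count or a pipe-tracing argument through the fusor/displacer tiles, not mere parity.
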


\begin{proof}
  If a West edge label of a tile contains a $1$, then the tile can't
  be a displacer (since the $Wbc$ on the East edge wouldn't end with $1$), 
  so the South edge must be $0$.  This can't happen on a tile at
  $(i,i)$, so none of the West edges of the pipe dream contain $1$s.

  If a $1$-pipe enters a fusor tile from the West, it must come out the North. 
  So the $1$-pipes go from South edges of the $K$-IP pipe dream to North edges.

\junk{

  Now we argue that every letter pipe coming from a South edge
  comes out an East edge. Briefly erase, from every vertical edge,
  all labels except for the rightmost. Then displacer tiles look like
  fusor tiles except that neither of $b$ nor $c$ is necessarily $0$.
  Regardless, the pipes are well-defined.
}

  Denote the numbers of labels on the North edge by $N_0 + N_1 = n$, 
  where $N_0,N_1$ are the numbers of labels $0,1$, on the East edge 
  by $E_0 + E_L = n$ (with $L$ for Letter), and on the South edge
  by $S_1 + S_L = n$. Then so far we have argued $S_1 = N_1$. %, $S_L \leq E_L$.
  From the North, the number of $0$-pipes coming out the West is
  $n-N_1 = n-S_1 = S_L$. From the East, the number is $n-E_L$.
  Summing, we get $n-E_L+S_L$ $0$-pipes on the West, 
  so every West edge ends with $0$ iff $S_L = E_L$.
\end{proof}

\subsection{Placing the next tile}\label{ssec:placing}

Let $s$ be a viable slice, with kink at $(i,j)$. Say that 
$s$ \defn{admits} a tile $\tau$, \defn{producing} the slice $s'$, if
\begin{itemize}
\item the upper half of $s$ has one more box (namely, the kink) than
  the upper half of $s'$,
\item if $s,s'$ agree on all common edges, 
\item the edges on which they differ bound the tile $\tau$ (at $(i,j)$), and
\item $s'$ is again viable.
\end{itemize}

\begin{Proposition}\label{prop:uniquefill}
  Let $s$ be a viable slice such that the East and South edges of the
  kink are not both labeled $0$. Then $s$ admits a unique tile, and
  the $s'$ produced has the same associated partial permutation as $s$ did.
\end{Proposition}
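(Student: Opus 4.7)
The plan is a case analysis on the pair $(a,b)$ of South and East labels of the kink $(i,j)$ in $s$, excluding the forbidden case $a=b=0$. I will show (U) at most one tile fits at $(i,j)$, (E) at least one fits, and (P) the associated partial permutation is preserved.

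For (U), the discussion following the tile schema in \S\ref{ssec:HT} and its $K$-theoretic analogue in \S\ref{sssec:Ktiles} state that a tile is forced by its South and East labels unless both are $0$. The ambiguity at $(0,0)$ is exactly between the equivariant, crossing, fusor-with-empty-word, and displacer schemata; every other $(a,b)$ appears on exactly one tile, so at most one candidate $\tau$ exists.

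For (E), exhibit the forced $\tau$ in each subcase and read off its North and West labels to form $s'$. The subcases are: (1) $a=b$ a nonzero singleton forces the elbows/fusor tile with $0$s on North and West, so $s'$ loses exactly the matched pair $\hlabel c,\vlabel c$ at the old kink; (2) $a \neq b$ both singletons forces the crossing tile, which simply swaps two rays across $(i,j)$; (3) the East label is a word $V$ of length $\geq 2$ (only in the $K$-setting), in which case the displacer is forced if $a$ equals the last letter of $V$ (popping that letter) and a crossing is forced otherwise. In (1) and (2) the remaining ray-counts and orderings of $s$ transfer unchanged to $s'$; in (3) the word-rewriting rule of \S\ref{sssec:wKlabels} is precisely what ensures $s'$ remains viable. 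For (P), in each subcase the tile either absorbs a matched pair whose ray-intersection dot in $s$ sat at $(i,j)$ and which contributes the same $(i,j)$-dot to $g(s')$ after completion, or else it merely permutes rays at $(i,j)$ without changing any ordered $(\hlabel c,\vlabel c)$ matching, so $g(s')=g(s)$.

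The main obstacle will be the $K$-theory subcase with a word-label on the East of the kink: one must track how the rewriting in \S\ref{sssec:wKlabels} interacts with the rest of $s'$, particularly ruling out a pipe inside $V$ later colliding with a matching ray at an interior letter of $V$ rather than at its end. Once that bookkeeping is in place the singleton cases reduce to picture-level inspection of the tile schemata, and (P) follows mechanically from the observation that tiles only locally rearrange or absorb rays.
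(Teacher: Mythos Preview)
Your proposal is correct and follows essentially the same case-by-case approach as the paper's proof: both check that the South/East labels of the kink determine a unique tile (crossing when the labels are disjoint, the ``dot'' tile when they coincide and are nonzero, displacer when the East label is a word whose last letter is the South label), and both verify by picture-level inspection that the resulting slice $s'$ has the same dot configuration as $s$. Two small remarks: in your case~(1) the tile is what the paper calls the \emph{dot} tile, not the fusor (the fusor is reserved for the South~$=$~East~$=0$ situation treated in the next proposition); and your case~(3) ``crossing otherwise'' is what the paper already folds into its disjoint-labels case, so you should note that for a viable $s$ the remaining possibility $a\in V$ but not terminal is excluded by the viability rules of \S\ref{sssec:wKlabels}.
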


\begin{proof}
  This is a straightforward case check, which we recommend to the reader.
  Spoilers commence for those who resist the pleasure.

\begin{center}
%\raisebox{1.5in}
  {
    \begin{minipage}[t]{0.72\linewidth}

  If the East and South edge of the kink have disjoint labels, then
  the tile $\tau$ must be a crossing tile. The rays from the new
  horizontal and vertical edges are labeled and pointing the same
  directions as before.  They match up with (or otherwise modify) the
  same perpendicular rays as they did before.
    \end{minipage} }
  \hfill
  \raisebox{-0.7in}
  {
    \begin{minipage}[t]{0.20\linewidth}
      {\epsfig{file=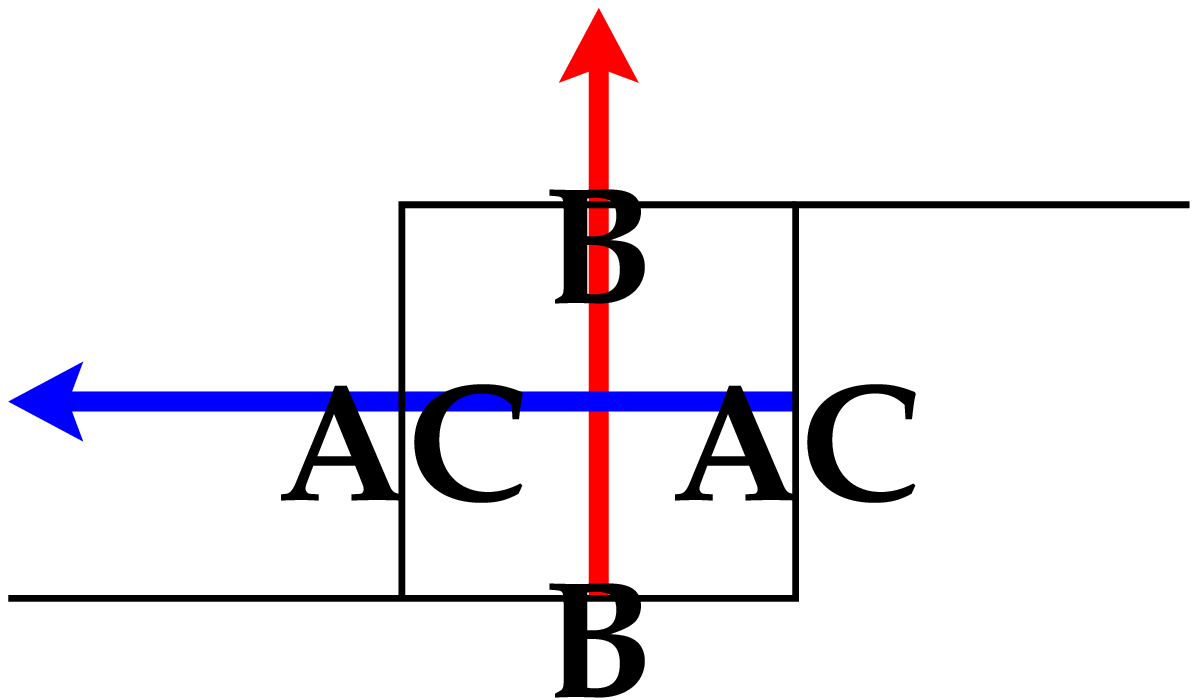,height=.8in}}
    \end{minipage} }
\end{center}

\begin{center}
%\raisebox{1.5in}
  {
    \begin{minipage}[t]{0.20\linewidth}
      {\epsfig{file=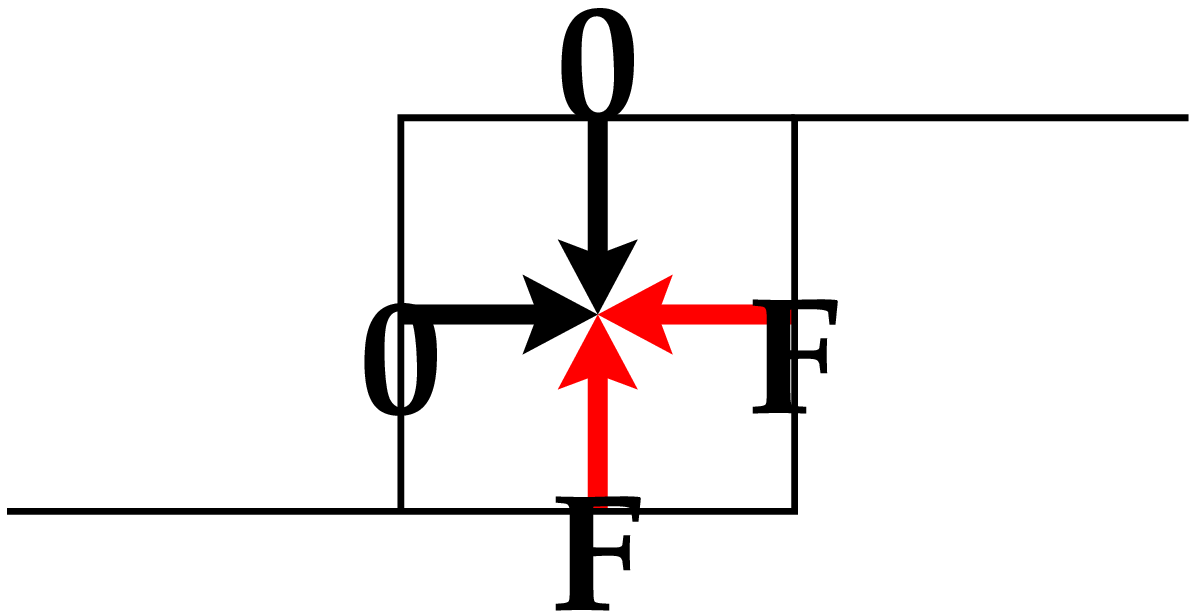,height=.8in}}
    \end{minipage} }
  \hfill
  \raisebox{0.7in}
  {
    \begin{minipage}[t]{0.72\linewidth}

  If these edges have the same label $b\neq 0$, then in the partial 
  permutation associated to $s$, there is a $b$ dot inside the kink.
  The tile $\tau$ must be a ``dot'' tile (in the list 
  from \S \ref{sssec:Ktiles}).
  When we fill it in, the slice $s'$ so produced has 
  a $0$ dot in the same place.
    \end{minipage} }
\end{center}

\begin{center}
%\raisebox{1.5in}
  {
    \begin{minipage}[t]{0.72\linewidth}

  Otherwise the East edge must have multiple labels on it. 
  By the definition of viability from \S \ref{sssec:wKlabels},
  the South label must be the last letter of the East label,
  and filling in the displacer tile to create $s'$ both preserves
  viability, and leaves the dots in place. (This is of course
  due to our recursive definition of viability, 
  in the presence of multiple labels.)
    \end{minipage} }
  \hfill
  \raisebox{-0.7in}
  {
    \begin{minipage}[t]{0.20\linewidth}
      {\epsfig{file=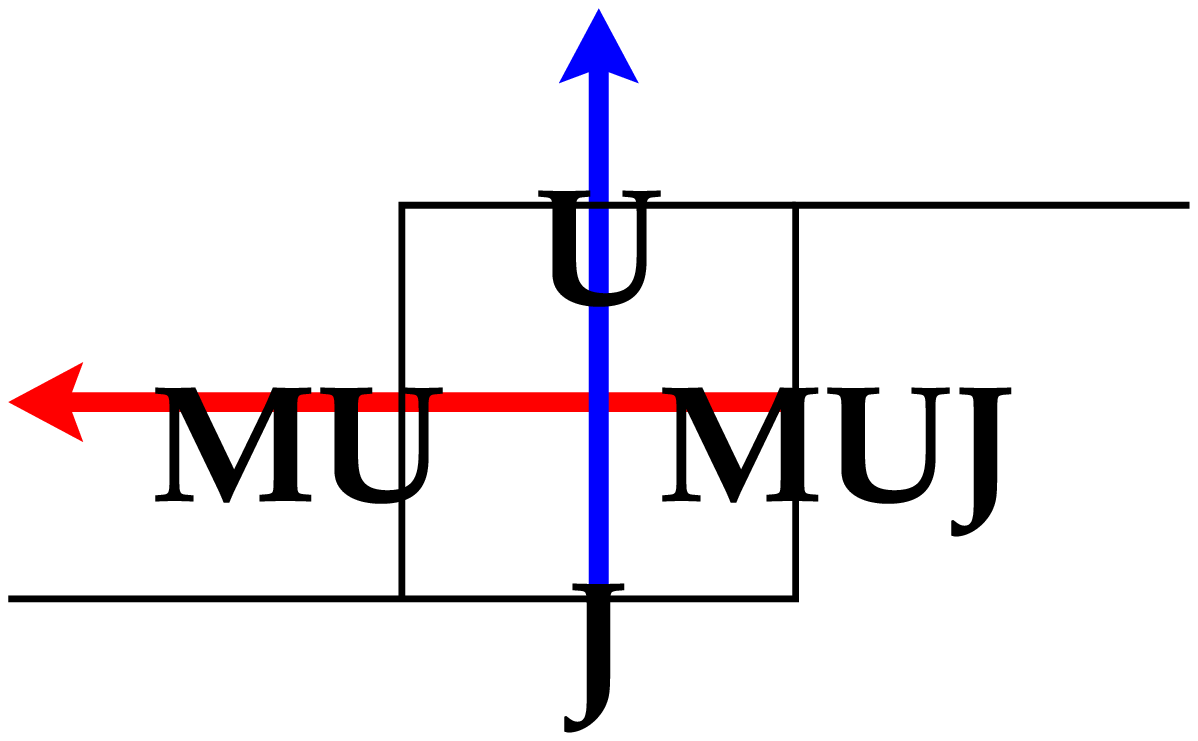,height=.9in}}
    \end{minipage} }
\end{center}
\vskip -.3in
\end{proof}

The remaining case -- when the East and South edges of the kink are
both labeled $0$ -- is much more interesting.

\begin{Proposition}\label{prop:nonuniquefill}
  Let $s$ be a viable slice, with kink at $(i,j)$, whose South and East edges 
  are labeled $0$. (In particular $i < j$, since there are no South
  $\hlabelzero$ labels on $(i,i)$ tiles.) So $s$ admits only fusor tiles.

  One such that $s$ admits is the equivariant tile, 
  and the $s'$ so produced has $\Pi_{f(s')} = \swij \Pi_{f(s)}$.

  The dots in $f(s)$ that are minimally Northwest of $(i,j)$,
  and in columns $[i,j]$, have all distinct labels. 
  Let $C$ be the list of these labels (read SW to NE),
  plus $1$ at the end if the $s$-labels West of the kink end $1\ 0^m$.

  For each nonempty sublist $S \subseteq C$, $s$ admits the fusor tile
  with West edge $S$. These tiles (and the equivariant tile) are all
  the tiles $s$ admits, and for each such $S$ the resulting $s'$ has
  $\Pi_{f(s')} = 
  \bigcap_{i'\in S} \Pi_{f(s) \circ (i\leftrightarrow j)\circ (i'\leftrightarrow i)}$
  as last seen in theorem \ref{thm:safeshift}.
\end{Proposition}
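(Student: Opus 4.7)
The strategy is to combine Proposition~\ref{prop:safe00essential} with Theorem~\ref{thm:safeshift} to produce, on the geometric side, the sweep and the shift components together with their intersections, and then match these one for one against the local tile choices at the kink.

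Because both the South and East edges of the kink at $(i,j)$ are $0$, Proposition~\ref{prop:safe00essential} tells us $(i+1,j)$ is an essential box for $f(s)$ and $\shij$ is a nontrivially safe shift for $\Pi_{f(s)}$. Writing $J = J(f(s))$ and $J' = J \circ (j \leftrightarrow J(i))$, Proposition~\ref{prop:safesweep} then gives $\swij \Pi_{f(s)} = \Pi_{J'}$, and Theorem~\ref{thm:safeshift} decomposes $\shij \Pi_{f(s)}$ into components $\Pi_{J' \circ (j \leftrightarrow j')}$ indexed by columns $j'$ of dots in $J'$ minimally Northwest of $(i,J(j))$, with each intersection $\bigcap_{i'\in S}\Pi_{J'\circ(j\leftrightarrow j')}$ identified with a single interval positroid variety via the explicit cyclic permutation formula.

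Next I rule out all tile types except fusors. A crossing tile requires distinct South and East labels, violated here; a dot-elbow tile (case (1)(a) of Theorem~\ref{thm:main}) requires a common nonzero South and East label, also violated; a displacer tile requires a multi-letter word on its East edge, violated since the East label is $0$. Only fusor tiles remain, parametrized by the word $W$ on their West edge. The $W = \emptyset$ choice is the equivariant tile: the new slice $s'$ it produces has exactly the same rays-and-dots picture as $s$ once the swap of row $i$ with the column of $J(j)$ is taken into account, so a direct inspection via \S\ref{ssec:viable} yields $\Pi_{f(s')} = \Pi_{J'} = \swij \Pi_{f(s)}$.

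The heart of the proof is the bijection between admissible nonempty $W$ and nonempty sublists of $C$, together with the identification of $\Pi_{f(s')}$ in each case. Placing a fusor with West word $W = (w_1,\ldots,w_m)$ reroutes each $w_\ell$-labeled ray so that its dot shifts one column West of its old minimally-NW position; viability of $s'$ forces each $w_\ell$ to correspond to a dot that was minimally NW of $(i,j)$ within columns $[i,j]$, since any other position would force either a double crossing of two pipes of label $w_\ell$ or would break the ray-ordering check. The optional trailing $1$ in $W$ is admissible precisely when the West side of $s$ reads $1\,0^m$, corresponding to the $1$-dot that has already been pushed to column $i$. Distinctness of the labels in $C$ follows from the same double-crossing argument, applied to any two minimally-NW dots in columns $[i,j]$ sharing a letter. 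Tracing the induced rerouting, one sees that $f(s')$ arises from $f(s)\circ(i\leftrightarrow j)$ by the cycle $j_1' \mapsto j_2' \mapsto \cdots \mapsto j_m' \mapsto i \mapsto j_1'$, which matches exactly the intersection formula from Theorem~\ref{thm:safeshift} and so gives $\Pi_{f(s')} = \bigcap_{i'\in S}\Pi_{f(s) \circ (i\leftrightarrow j)\circ (i'\leftrightarrow i)}$.

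The main obstacle will be executing this last dictionary cleanly. The recursive viability criterion of \S\ref{sssec:wKlabels} allows West-edge words of arbitrary length, and one must verify both directions: every sublist of $C$ actually yields a viable $s'$, and no other word does. The careful bookkeeping will use the Jordan-curve structure of the labeled rays to show that the only obstructions to viability are precisely the two mentioned above (double crossings and misordered rays), and that the trailing-$1$ case behaves exactly as the dot in column $i$ does on the geometric side.
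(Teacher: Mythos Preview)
Your strategy matches the paper's: combine the geometric input (Proposition~\ref{prop:safe00essential}, Proposition~\ref{prop:safesweep}, Theorem~\ref{thm:safeshift}) with a local analysis of which fusor tiles can be placed at the kink, and check that the two sides of the dictionary line up. The overall architecture is correct.

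There are, however, two concrete inaccuracies in your sketch of the tile analysis. First, you omit the key structural observation that makes the $\Longleftrightarrow$ tractable: once a fusor with West word $W$ is placed at $(i,j)$, the tiles immediately to its left in row $i$ are \emph{uniquely forced} --- one sees only crossing tiles (copying the vertical label) and displacer tiles (peeling the rightmost letter off $W$) until $W$ is exhausted, at which point a dot tile appears. Proposition~\ref{prop:west0s} guarantees this dot tile is reached. This forced sequence is what lets one read off, for each letter in $W$, the column $j'$ of the dot it corresponds to, and hence compare with the list $C$.

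Second, your obstruction for the $\Longrightarrow$ direction is misstated. You write that a letter $w_\ell$ not coming from a minimally-NW dot would force ``a double crossing of two pipes of label $w_\ell$''. Same-label pipes never cross even once; the actual obstruction involves two \emph{different} letters. If the $A$-dot reached by the $w_\ell=A$ displacer is not minimally Northwest of $(i,j)$, there is an intervening $B$-dot (with $B\neq A$) in some column strictly between. One then argues via the Jordan curve theorem that the resulting $A$-pipe and $B$-pipe must cross twice --- contradicting the nonlocal rule on lettered pipes if $B\notin W$, or the $K$-IP rule that pipes sharing a word cross exactly once if $B\in W$. Finally, the distinctness of labels in $C$ is immediate from viability (same-label dots are required to be NW/SE of one another, so at most one can be minimally NW of $(i,j)$); no double-crossing argument is needed there.
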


\begin{proof}
%  When $i=j$, the South edge of the kink is labeled $1$, not $0$. So $i < j$.

  Before placing the equivariant tile, 
  there are $0$-rays coming South and East out of the kink.
  The South-pointing ray goes down to a $0$-dot $\delta$, and the East-pointing
  ray either meets a South-pointing $0$-ray $\rho$ at a $0$-dot, 
  or it exits the bottom half entirely. 
  Once we place the equivariant tile at the kink, producing $s'$, the $0$-rays 
  each start one step back, and now collide at a $0$-dot in $(i,j)$.  
  If there is a $\rho$, it no longer hits the East-pointing $0$-ray 
  from the kink, but continues down to the row where $\delta$ was.
  Effectively, $\delta$ has moved up into $(i,j)$, and the $0$-dot from
  row $i$ (if there is one) has moved down to $\delta$'s row. 
  This is exactly the sweeping action on dots 
  computed in proposition \ref{prop:safesweep}.
  The fact that the ray/dot picture continues to exist is our
  definition of viability.

  Since two dots in $f(s)$ with the same label 
  were required to be NW/SE of each other,
  the dots that are minimally Northwest of $(i,j)$ must have distinct labels.
  Let $C$ be the list of those dots in columns $[i,j]$ (read SW to NE),
  plus $1$ at the end if the $s$-labels west of the kink end $1\ 0^m$.
  We now claim that $s$ admits a fusor tile with West edge $W$
  $\Longleftrightarrow$ the list $W$ is a (nonempty) sublist of $C$.

  For each direction of this iff, it helps to understand what tiles will be 
  placed after (i.e. further left from) the fusor is placed. There are only
  crossings (which copy the vertical labels) and displacers (which
  remove one letter from $W$ at a time, from the right), until all
  the letters in $W$ are gone and we hit a dot tile. (More tiles are
  forced thereafter, usually, but this is enough to consider.) We
  have to hit a dot tile at some point, 
  because by proposition \ref{prop:west0s} the leftmost vertical
  edge will be a $\vlabel 0$.
  See figure \ref{fig:Krays} for the full story.

  \begin{figure}[htbp]
    \centering
    \epsfig{file=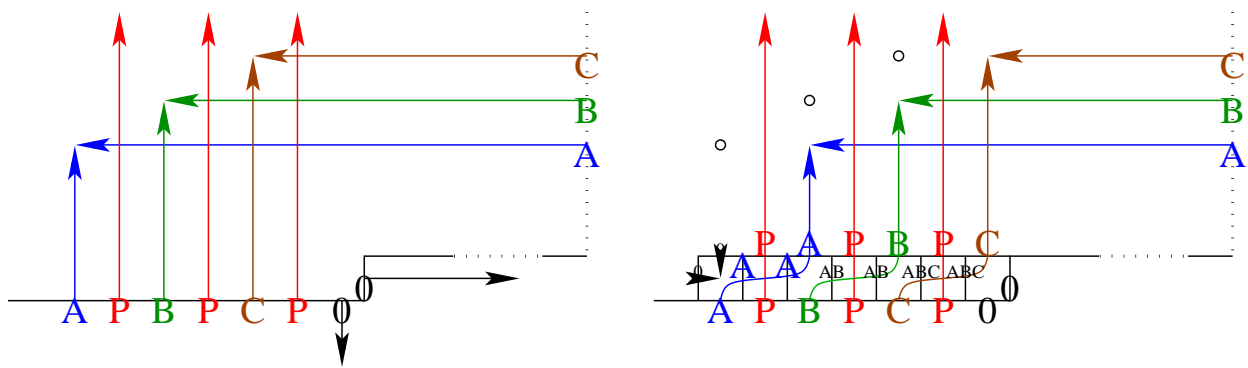,width=6.5in}    
    \caption{From the left to the right figure, we place the
      fusor tile, forcing the next several displacer and crossing tiles,
      then the dot tile.
      The little circles in the right figure show where the dots were, 
      before they moved rightward. Hopefully this picture
      also indicates the motivation of the name ``displacer'' tile.}
    \label{fig:Krays}
  \end{figure}

  $\Longleftarrow$ Place the tile, producing $s'$, and the follow-on
  tiles up through the first dot tile. To show this
  preserves viability, we have to show that there is again a consistent
  system of rays and dots; this is best illustrated in figure \ref{fig:Krays}.

  $\Longrightarrow$
  We claim that if $s$ admits a fusor tile with West edge $W$,
  then $W$ must be a sublist of $C$. Each displacer tile encountered
  before the dot tile lies over a letter or a $1$, 
  with a North-pointing ray. We claim that the dot that ray points to
  (interpreting the ray from a $\hlabel 1$ as pointing to a dot just
  outside the triangle) is minimally Northwest of $(i,j)$.

  This is easy for the $\hlabel 1$ case. By the condition on crossing tiles, 
  before we meet the $\hlabel 1$ displacer we go through 
  $\hlabelzero$ crossing tiles, whose dots are to the South.

  Otherwise the displacer tile at $(i,j')$ lies atop a letter, say $A$, 
  with a ray pointing up to an $A$-dot. If the $A$-dot is not
  minimally Northwest of $(i,j)$, then there is another (say) $B$-dot
  in some column $j''$, $j' < j'' < j$ in between the $A$-dot and $(i,j)$.
  (So $B$ is a letter, not $0$ or $1$.)
  We do assume this $B$-dot to be minimally Northwest, so, lying
  atop the rightmost $\hlabel B$ left of $(i,j)$.
  This letter $B$ must be distinct from $A$, or else the $A$ would
  already have been ripped out of the vertical label.
  Before we place the fusor tile, the $A$-rays and $B$-rays do not 
  intersect at all, by the $B$-dot being SE of the $A$-dot. 

  There are two cases: the fusor tile involves the label $B$, or not.

  If not, then once we place tiles at $(i,j)$--$(i,j')$, going from
  the left figure here to the right figure,

  \centerline{\epsfig{file=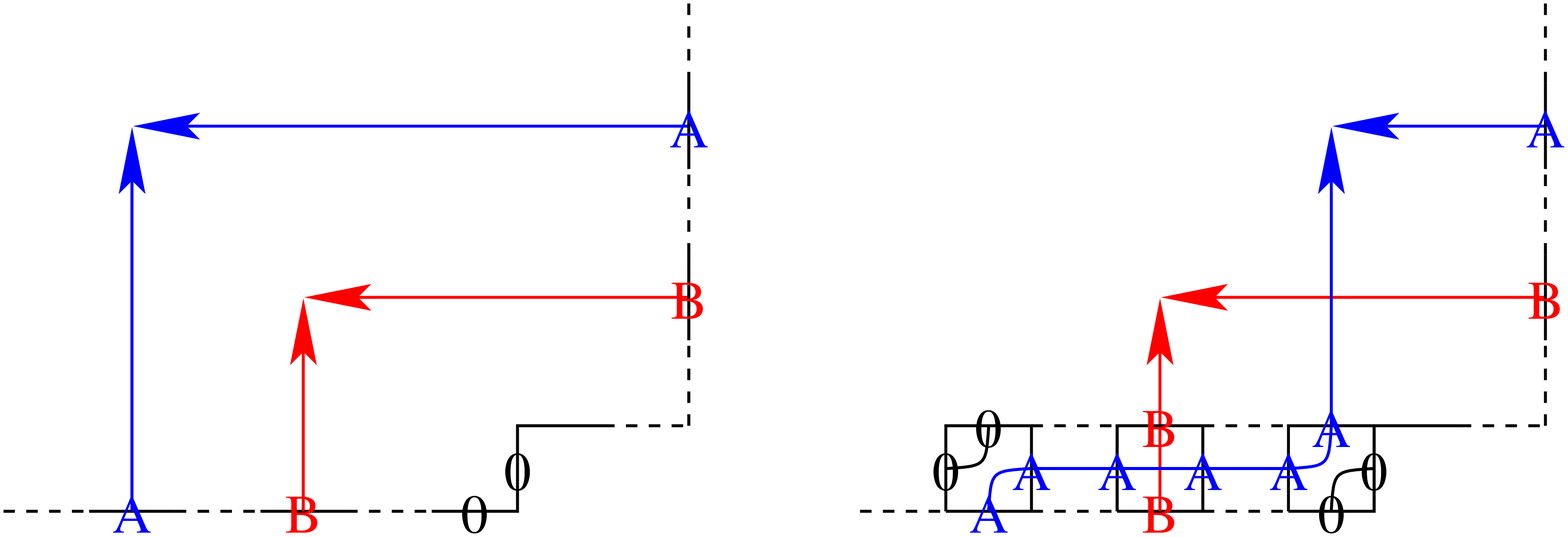,width=6.5in}    }

  \noindent the $A$-pipe crosses the $B$-pipe once in the tiles 
  and once in the rays. Placing more tiles won't fix the latter intersection, 
  by the Jordan curve theorem, so we know that whatever pipe dream we
  make eventually will have two lettered pipes crossing twice. 
  That being forbidden is the contradiction that says there is no
  offending $B$-dot.

  If yes, then the West label of the fusor tile involves both $A$ and $B$
  so we are definitely using $K$-pieces. Now we invoke the nonlocal
  condition on $K$-IP pipe dreams (look again at \S \ref{sssec:Ktiles})
  and Jordan curve to reach a similar contradiction.
\end{proof}

\subsection{Proofs of the main theorems}

\begin{proof}[Proof of theorem \ref{thm:main}]
  This just combines propositions 
  \ref{prop:safe00essential}, \ref{prop:uniquefill}, 
  and \ref{prop:nonuniquefill}.
\end{proof}

The other proofs are by induction through the Vakil sequence. 
For each pair $(i \leq j)$,
define a \defn{$K$-IP pipe dream $P$ below $(i,j)$} to be 
a viable slice at $(i,j)$ plus a filling of its bottom half with $K$-tiles.
We can interpret the previous 
definition of $K$-IP pipe dream as the $(i=0,j=n)$ case,
and otherwise require $i \in [n]$.
(These partial pipe dreams will be directly useful in \cite{KL}.)

It is clear how to extend the definition of $f(P)$, $fusing(P)$, and
$wt_K(P)$ to these partial pipe dreams. And since each comes with a slice,
we can define the partial permutation $g(P)$ to be $g($that slice$)$. 
For $(i,j) = (n,n)$ (no tiles) we have $g(P)=f(P)$, 
whereas for $(i,j) = (0,n)$ (usual $K$-IP pipe dreams),
we have $g(P)$ running NW/SE, in the first $n-k$ rows, 
with columns determined by $\lambda(P)$.

\begin{proof}[Proof of theorems \ref{thm:KTformula},
  \ref{thm:Kformula}, \ref{thm:HTformula}, \ref{thm:Hformula}]
  We will prove first a generalization of theorem \ref{thm:KTformula}, 
  that for each $(i \leq j)$,
  $$ [\Pi_f] = \sum_{P:\ f(P) = f} (-1)^{fusing(P)} wt_K(P)\ [\Pi_{g(P)}] $$
  where the $P$ summed over are the $K$-IP pipe dreams below $(i,j)$.
  When $(i,j) = (0,n)$, this is just the sum over $K$-IP pipe dreams $P$,
  and $\Pi_{g(P)} = X^{\lambda(P)}$ by lemma \ref{lem:oppSchubert}.

  The proof will be by induction through the Vakil order,
  where the base case is $(i,j)=(n,n)$, handled by
  lemma \ref{lem:Richardson}. There is a unique (tile-less)
  $K$-IP pipe dream $P$ below $(n,n)$, and $g(P) = f(P) = f$, 
  giving the equation $[\Pi_f] = (-1)^0 \cdot 1 \cdot [\Pi_f]$. 
  In the inductive step, we want to place one tile on each $P$
  in the summation. 

  If the South and East edges of the kink are not both labeled $0$,
  then $\Pi_{g(P)}$ is $\shij$-invariant 
  (proposition \ref{prop:safe00essential}), 
  and there is a unique way of placing the tile and it does not move
  the dots (proposition \ref{prop:uniquefill}).

  If the South and East edges of the kink are both labeled $0$,
  let $P_e$ be $P$ plus an equivariant piece, 
  and $\{P_S\}_{S \subseteq C}, S\neq \emptyset$, where $C$ is as in 
  proposition \ref{prop:nonuniquefill}. 

  Since positroid varieties are $T$-convex by theorem \ref{thm:Tconvex}
  (1 or 2), we can use the degree $1$ result from theorem \ref{thm:Tconvex} (3).
  That, and them having rational singularities
  justifies use of proposition \ref{prop:KTshift} which computes $K_T$-classes
  using shifts. It says
  \begin{eqnarray*}
     [\Pi_{g(P)}] &=& (1 - \exp(y_j-y_i)) [\swij \Pi_{g(P)}] 
     + \exp(y_j-y_i) [\shij \Pi_{g(P)}] \\
     &=&  (1 - \exp(y_j-y_i)) [\Pi_{g(P_e)}]
     + \exp(y_j-y_i) \sum_{S \subseteq C,\ S\neq \emptyset} (-1)^{\#S - 1} [\Pi_{g(P_S)}] \\
     && \text{by proposition \ref{prop:nonuniquefill} 
       and theorem \ref{thm:safeshift}, which we rewrite as} \\
     &&\\
     &=& \sum_{P\text{ admits }\tau} (-1)^{fusing(\tau)} wt_K(\tau) [\Pi_{g(P\& \tau)}].
  \end{eqnarray*}
  If we use that recursively, and the multiplicativity of the definition
  of $fusing$ and $wt_K$, we get the generalization claimed.

  This directly implies theorem \ref{thm:Kformula} by taking all $y_i = 0$,
  and from there theorem \ref{thm:Hformula} by taking leading terms.

  With some care we could derive the $H^*_T$ theorem \ref{thm:HTformula} 
  as the leading terms of this $K_T$ formula. But it is simpler just
  to replace the use of proposition \ref{prop:KTshift}'s $K_T$-formula
  with its $H^*_T$-formula.  From that point the derivation is the same.

  Finally, we address the fusing count from the end of 
  theorem \ref{thm:Kformula}, proving by induction more generally that
  $$ fusing(P) = \dim \Pi_{f(P)} - \dim \Pi_{g(P)} + \#\text{equivariant tiles} $$ 
  for these partial pipe dreams. The base case of the induction is 
  $P$ empty, where $f(P)=g(P)$. For the induction, we see how $g(P)$ changes
  as we attach one tile, using the analyses of propositions 
  \ref{prop:uniquefill} and \ref{prop:nonuniquefill}. There are three cases:
  \begin{enumerate}
  \item \emph{Unique fill.} See proposition \ref{prop:uniquefill}.
    We don't change $fusing(P)$ or $g(P)$, nor attach an equivariant tile,
    so none of the four numbers change.
  \item \emph{Equivariant tile.} See proposition \ref{prop:nonuniquefill}.
    We change $g(P)$ by a covering relation in affine Bruhat order,
    increasing $\dim \Pi_{g(P)}$ and the number of equivariant tiles 
    each by $1$.
  \item \emph{Fusing.} See proposition \ref{prop:nonuniquefill}.
    The change in length of $g(P)$ matches the change in the fusing number.
  \end{enumerate}
\end{proof}

\section{Puzzles $\quad\leftrightarrow\quad$ 
  IP pipe dreams with one letter}\label{sec:puzzles}

In this section we consider IP pipe dreams (not $K$-IP) with only one letter,
i.e. the edge labels are $\{0,1,R\}$.
% (and combinations thereof, for $K$-IP pipe dreams). 
This has two interesting effects.

The first is that in the initial (and every later)
slice $s$ the dots in the
top half of $f(s)$ are NW/SE, and consequently, the interval
positroid variety associated to an initial slice (where the 
top half is the whole upper triangle) is a Richardson variety
$X_\mu\cap X^\nu$.
In particular, its homology class is given by the Littlewood-Richardson rule, 
and its geometric shifts have already been studied in \cite{Vakil}.

The second effect is that the conditions defining the IP (but not $K$-IP)
pipe dreams become entirely local: since two lettered pipes with the
same letter don't cross even once, they definitely won't cross twice,
and there aren't two different letters to worry about.

\subsection{Puzzles}
We give a slightly modified definition of the 
{\em equivariant puzzles} from \cite{KT}.
The \defn{puzzle labels} are $0$, $1$, $R$, and
the \defn{puzzle pieces} are these:

\centerline{\epsfig{file=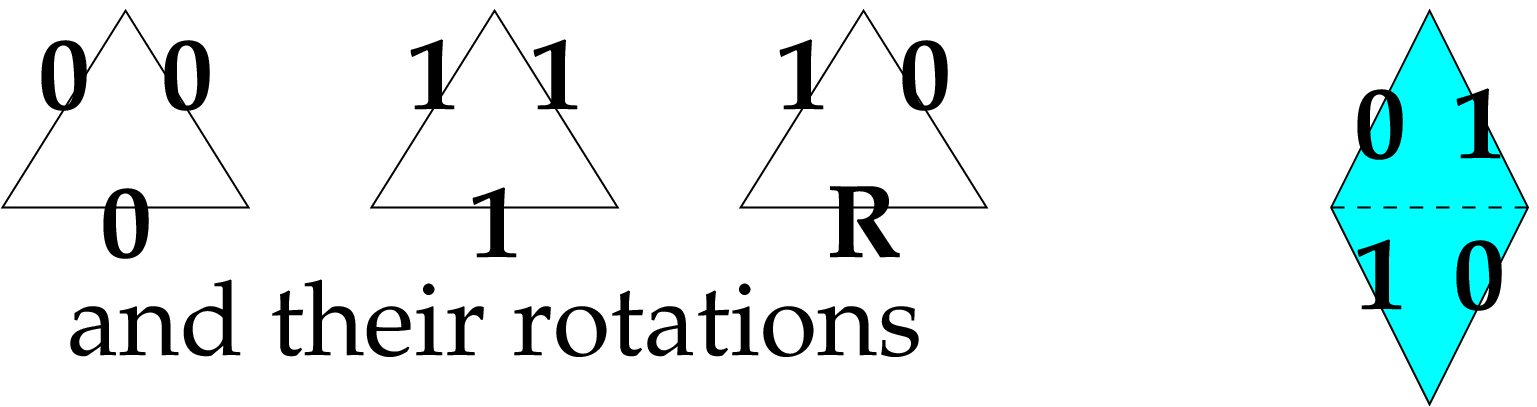,height=.9in}}

\noindent A size $n$ \defn{puzzle} is a tiling of a size $n$ equilateral 
triangle (parallel to those above)
with puzzle pieces, such that the boundary has no $R$ labels.
Consequently, the pieces with $R$s come in adjacent pairs, the $R$
being for ``rhombus''. The fourth piece is
the \defn{equivariant piece}, with \defn{equivariant weight} $y_i-y_j$,
where $i$ is the distance of that piece from the Northwest side of the
puzzle, and $n-j$ its distance from the Northeast side.
(Other pieces have equivariant weight $1$.)

To compare puzzles to IP pipe dreams, it is mnemonic to
first compare the equivariant rhombus of weight $y_i-y_j$ 
to the corresponding equivariant tile. We will need to stretch
the puzzle to drape each rhombus across the corresponding square, 
and will need to change the labels on {\em vertical} edges.

\junk{
The first step in comparing puzzles to IP pipe dreams is to stretch the
$60^\circ$-$60^\circ$-$60^\circ$ triangles to $45^\circ$-$45^\circ$-$90^\circ$,
so that a weight $y_i-y_j$ equivariant rhombus lands in the $(i,j)$ 
matrix entry; this is
by moving the North vertex Eastward and the Southwest vertex North.
The puzzle pieces are now

\centerline{\epsfig{file=puzpiecesbent.eps,width=6.5in}}

The second step is to notice that the equivariant puzzle piece has
the same horizontal labels as the equivariant tile, but not the
same vertical labels. So we need to convert those:
}

\begin{Theorem}\label{thm:vakilvarieties}
  Given a puzzle of size $n$, apply the following transformations:
  \begin{enumerate}
  \item Move the North corner right until it is above the Southeast corner,
    then the Southwest corner up until it is left of the North corner.
  \item Along the NW/SE diagonal, attach
    \epsfig{file=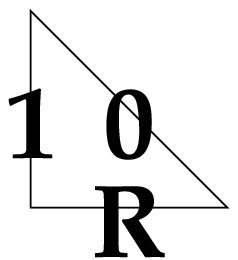,width=.4in}
    and
    \epsfig{file=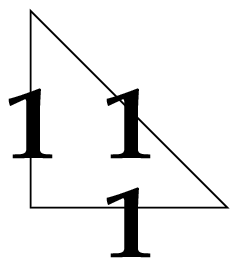,width=.4in} pieces
    so that the resulting shape is that of an IP pipe dream.
  \item Erase all diagonal labels, so the result is a bunch of labeled squares.
  \item On vertical edges, rotate the labeling 
    $\vlabel R \mapsto \vlabel 1 \mapsto \vlabel 0 \mapsto \vlabel R$.
    Horizontal labels we leave alone.
  \end{enumerate}
  Then the result is an IP pipe dream. 
  An example is in figure \ref{fig:puzIPex}.

  \begin{figure}[htbp]
    \centering
    \epsfig{file=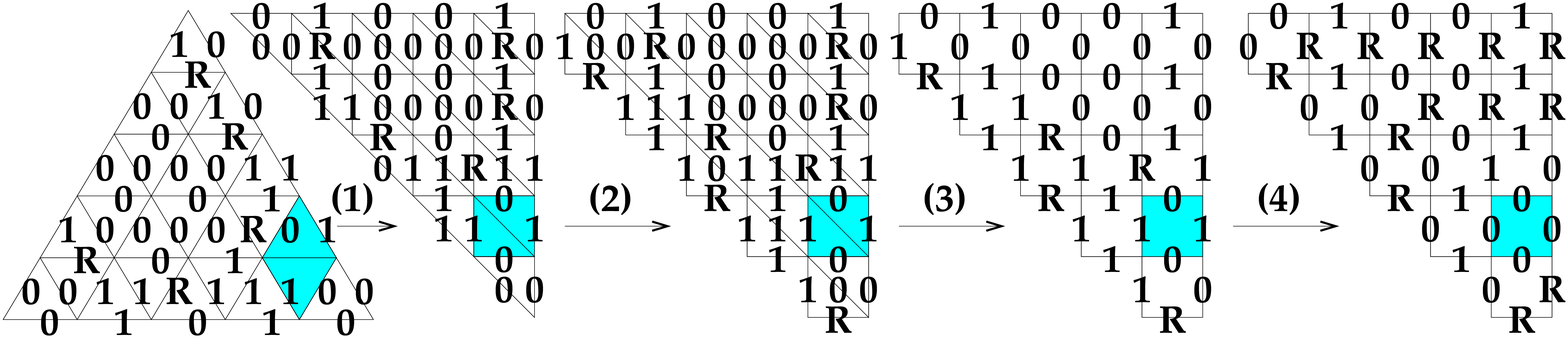,width=6.5in}
    \caption{The transformation of a puzzle, weight $y_4-y_5$,
      into an IP pipe dream.}
    \label{fig:puzIPex}
  \end{figure}

  Let $\lambda,\mu,\nu \in {[n]\choose k}$. 
  This composite transformation gives a weight-preserving bijection 
  between 
  \begin{itemize}
  \item puzzles with $\lambda,\mu,\nu$ giving the positions of the $1$s 
    on the NW, NE, and S sides (each read left-to-right), and
  \item IP pipe dreams with only the labels $\{0,1,R\}$,
    where $\lambda$ gives the positions of the $1$s on the North side, 
    and $\mu,\nu$ give the positions of the $R$s on 
    the East side (read top-to-bottom) and South side (read left-to-right).
  \end{itemize}
\end{Theorem}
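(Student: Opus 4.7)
The plan is to reduce the theorem to a finite local check together with a boundary consistency argument. The four transformations (1)--(4) are affine deformations and local relabelings, so they act tile-by-tile on the puzzle. I would first tabulate the handful of puzzle pieces (the three basic triangles/rhombi plus the equivariant rhombus, with all permissible orientations relative to the NW/SE axis) and verify that each image under steps (1)--(4) is one of the four IP pipe dream tiles (crossing, $a=0$ elbows, $a=1$ elbows, equivariant). The diagonal fillers introduced in step (2) pad the deformed triangle into the staircase shape of an IP pipe dream; they contribute only forced tiles whose labels, once rotated, match the South-edge and West-edge conditions of an IP pipe dream.

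Next I would verify the boundary correspondence. The NW puzzle side becomes the horizontal North edge after step (1), so its $0,1$ labels pass through unchanged, identifying the $1$-positions with $\lambda$. The NE puzzle side becomes the vertical East edge, and under the rotation $\vlabel R \mapsto \vlabel 1 \mapsto \vlabel 0 \mapsto \vlabel R$ a puzzle $1$ becomes the label that records the positions of $R$'s on the East side of the IP pipe dream, giving $\mu$. The S side of the puzzle, together with the diagonal pieces attached in step (2), becomes the staircase South boundary; the analogous rotation/erasure check identifies $\nu$ as the positions of the $R$-labeled bottom edges.

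The nonlocal IP pipe dream condition is automatic in this setting because there is only one letter in play: two $R$-pipes cannot cross at all (crossing tiles require distinct labels $a\neq b$), so the prohibition ``no two lettered pipes cross twice'' is vacuous. For weight preservation, the equivariant rhombus and the equivariant tile are the only pieces contributing a nontrivial factor; a rhombus at puzzle coordinates (distance $i$ from NW, distance $n-j$ from NE) is carried by step (1) to matrix entry $(i,j)$, where the IP pipe dream weight $y_i - y_j$ agrees with the puzzle weight $y_i - y_j$.

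I expect the main obstacle to be the bookkeeping in the local check: puzzle triangles come in pairs sharing an internal diagonal, while IP pipe dream tiles are squares, so each IP pipe dream tile must be matched against a specific pair of puzzle triangles (or a single rhombus) with a specified diagonal orientation. Once this case analysis is completed, bijectivity is immediate because each of the steps (1)--(4) is invertible locally, and the inverse transformation produces a legitimate puzzle from any IP pipe dream whose only letter is $R$.
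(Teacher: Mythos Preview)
Your approach is correct and essentially identical to the paper's: the paper observes that both the puzzle and the one-letter IP pipe dream conditions are purely local, then simply exhibits the correspondence between the $10$ possible rhombi (the $9$ ways to glue two triangles along a diagonal labeled $0$, $1$, or $R$, plus the equivariant rhombus) and the $10$ possible tiles ($5$ crossing, $2$ dot, $2$ fusor, $1$ equivariant) via a figure. Your elaboration of the boundary matching and the weight-preservation argument is sound and fills in what the paper leaves implicit.
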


\begin{proof}
  Since both definitions are local, the proof is simply a correspondence
  between the $9$ ways to attach two triangles together 
  ($1$ with $R$ on the diagonal, $2*2*2$ if one chooses the diagonal
  to be $0,1$ and then chooses each half), plus the equivariant rhombus, 
  to the $10$ possible tiles ($2$ dot tiles, $2$ fusors, $3*2-1$ crossing,
  $1$ equivariant). Here it is:

  \centerline{  \epsfig{file=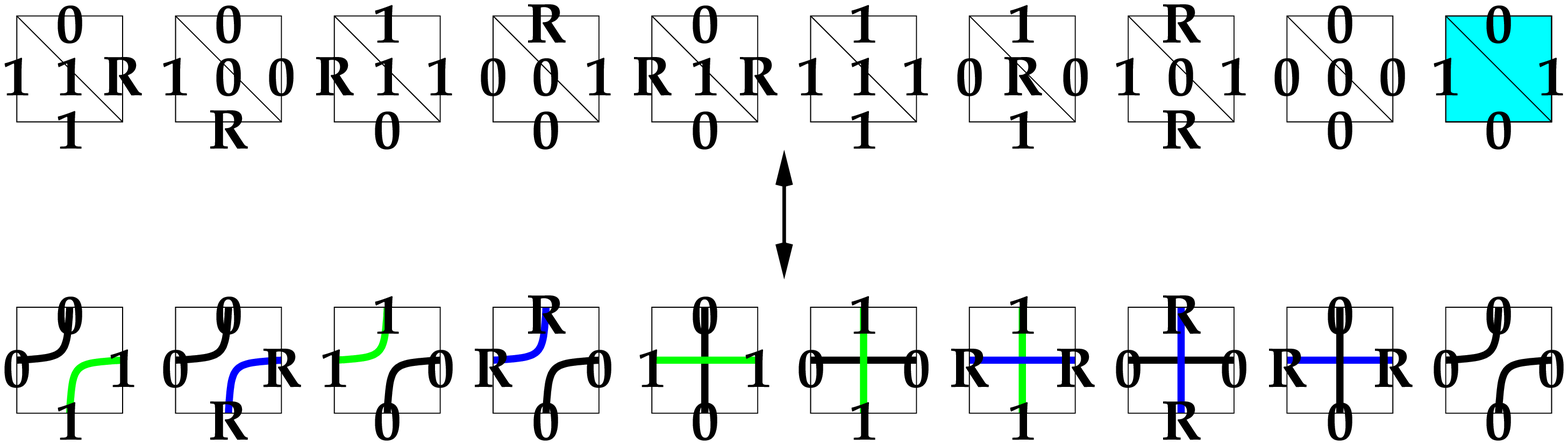,width=6.5in}}
\junk{
  \begin{itemize}
  \item $2$ dot tiles, with $0\& 1$ or $0\& R$,
  \item $2$ fusor tiles, with $1\& 0$ or $R\& 0$,
  \item $3*2-1$ crossing tiles (the $-1$ has $1$ horizontally crossing $R$), and
  \item the equivariant tile,
  \end{itemize}
}
\end{proof}

Since we are working in equivariant cohomology and not $K$-theory,
the Schubert and opposite Schubert bases are dual bases in the sense that
$\int_{\Grkn} [X_\lambda] [X^\mu] = \delta_{\lambda\mu}$. 
So instead of interpreting puzzles as computing the coefficient of
$[X^\lambda]$ in the class $[X_\mu \cap X^\nu] = [X_\mu][X^\nu]$ of the
Richardson variety, we can equivalently interpret them 
as computing the coefficient of $[X_\nu]$ in the product $[X_\lambda][X_\mu]$,
giving the main result of \cite{KT}.

One benefit of the puzzle combinatorics over that of the IP pipe dreams
is to make combinatorially evident a
$\integers/3$-symmetry of the nonequivariant Schubert structure constants,
namely $c_{\lambda \mu}^{\nu^*} = c_{\mu \nu}^{\lambda^*}$, 
since both equal $\int_{\Grkn} [X_\lambda] [X_\mu] [X_\nu]$.
(This symmetry does not hold equivariantly, since the dual basis
element $[X^{\lambda}]$ to $[X_\lambda]$ only equals $[X_{\lambda^*}]$ 
nonequivariantly. And sure enough, the puzzle rule is only 
rotationally symmetric when we exclude the equivariant piece.)

The paper \cite{PZJ} also reinterprets puzzles in terms of pipes, or
more properly space-time diagrams of two colors of fermions moving on a line, 
but the world-lines of those particles don't seem to have much to
do with the pipes here.

\junk{
I should come back to this stuff

\section{Stanley symmetric functions}\label{sec:Stanley}

For $\rho \in S_n$, define the \defn{(flag manifold) Schubert variety}
respectively \defn{opposite Schubert variety}
$$ X_\rho := B_- \dom \overline{B_- \rho B}, \qquad
X^\rho := B_- \dom \overline{B_- \rho B_-} \qquad
\subseteq B_- \dom GL(n),$$ 
where $B_-,B$ are the lower and upper triangular matrices, respectively.
Its codimension (resp. dimension) is $\ell(\pi)$. 

Let $\pi_k : Flags(\complexes^n) \to \Grkn$ be the projection.
Then we proved in \cite{KLS} that
\begin{itemize}
\item each positroid variety $\Pi_f$ is of 
  the form $\pi_k(X_\rho \cap X^\sigma)$ for some $\rho,\sigma$,
\item $[\Pi_f] = (\pi_k)_*[X_\rho \cap X^\sigma]$ as $K_T$-classes, 
\item if one assumes $\sigma$ has no descents other than 
  possibly $\sigma(k)>\sigma(k+1)$, then $\rho,\sigma$ are unique and the map 
  $\pi_k: X_\rho \cap X^\sigma \to \pi_k(X_\rho \cap X^\sigma)$ is birational, and
\item under the map $\{$symmetric functions$\} \onto H^*(\Grkn)$, 
  the ``affine Stanley symmetric function'' $F_f$ mapsto to $[\Pi_f]$. 
\end{itemize}

\section{Kogan Schubert calculus}\label{sec:Kogan}

For each composition $n = n_1 + \ldots + n_m$
(each $n_i > 0$), let $n'_i = n_1+\ldots+n_i$, and we have the identification 
\begin{eqnarray*}
   P_- \dom GL(n) &\quad\wt\longrightarrow\quad&
   Flags(n'_1,n'_2,\ldots,n) := \{(V^{n'_1} < V^{n'_2} < \ldots < \AA^n)\} \\
   M &\mapsto& (\ldots < \text{span of the first }n'_i\text{ rows}
   < \ldots)
\end{eqnarray*}
where $P_-$ is the parabolic $(\bigoplus_{i=1}^m GL(n_i)) \cdot B_-$.
A Schubert variety $X_\pi \subseteq B_- \dom GL(n)$ is the preimage of 
a subvariety of $Flags(n'_1,n'_2,\ldots,n)$ 
iff $\pi(j) < \pi(j+1)$ away from $j \in \{n'_1,n'_2,\ldots,n'_{m-1}\}$.

In \cite{Kogan,KY} was solved the following class of
Schubert calculus problems:
\begin{quote}
  Let % $\lambda \in {[n]\choose k}$, and
  $X_\rho \subseteq B_- \dom GL(n)$ the preimage of 
  $X_\lambda \subset \Grkn \iso P_- \dom GL(n)$. \\
  Let $\pi \in S_n$ be a permutation 
  with no descents $\pi(i)>\pi(i+1)$ for $i>k$. \\
  What is the expansion of $[X_\pi][X_\rho]$ in Schubert classes?
\end{quote}
\cite{Kogan} gave a complete answer in cohomology, which \cite{KY} 
extended to $K$-theory.
This class lies within the ``Schur-times-Schubert'' problems that
have seen much attention recently \cite{ABS,MP}.

}

\end{document}